\begin{document}
\newtheorem{prop}{Proposition}[section]
\newtheorem{exa}{Example}[section]
\newtheorem{theo}{Theorem}[section]
\newtheorem{rem}{Remark}[section]
\newtheorem{lem}{Lemma}[section]
\newtheorem{defi}{Definition}[section]
\newtheorem{coro}{Corollary}[section]
\newtheorem{prob}{Problem}[section]
\newtheorem{nota}{Notation}[section]
\newtheorem{obs}{Observation}[section]
\def\proj{{\Bbb P}^2} 
\def\R{{\Bbb R}}
\def\Z{{\Bbb Z}}
\def\C{{\Bbb C}}
\def\Q{{\Bbb Q}}
\newcommand{\ffc}[1]{{{\cal F}}(#1)}
\newcommand{\wc}[1]{{{\cal M}}(#1)}
\newcommand{\ps}[1]{{{\Bbb P}}^{#1}}
\def\F{{\cal F}}
\def\P{{\cal P}}
\def\Pt{\C[t]}
\def\RR{{\cal R}} 
\def\li{{\Bbb L}}
\def\L{{\cal L}}
\newcommand{\cfc}[1]{\{\delta_t\}_{t\in #1}}
\begin{center}
{\LARGE\bf Center conditions: pull-back of differential equations
\footnote{
Keywords: Holomorphic foliations, Picard-Lefschetz theory

}
\\}
\vspace{.25in} {\large {\sc Yadollah Zare}}

\end{center}
\begin{abstract}
The space of polynomial differential equations of a fixed degree with a center
singularity has many irreducible components. We prove that pull-back differential equations form an irreducible component of such a space. The method used in this article is inspired by Ilyashenko and Movasati's method. The main concepts are the Picard-Lefschetz theory of a polynomial in two variables with complex coefficients, the Dynkin diagram of the polynomial and the iterated integral.
\end{abstract}
\setcounter{section}{-1}
\section{Introduction}
Let $\mathbb{C}[x,y]_{\leq d}$ be the set of polynomials in the two variables $x,y$, and coefficients in $\mathbb{C}$ of degree less than or equal to $d\in\mathbb{N}_0$. The space of algebraic foliations
\[
\mathcal{F}=\mathcal{F}(\omega)\ \ ,\ \ \  \omega\in \Omega_d^1 ,
\]   
where 
\[
\Omega_d^1:=\{P(x,y)dy-Q(x,y)dx|\ \ P,Q\in \mathbb{C}[x,y]_{\leq d}\},
\]
is the projectivization of the vector space $\Omega_d^1$, and it is denoted by $\mathcal{F}(d)$. The maximum degree of the polynomials $P$ and $Q$ is known as the (affine) degree of $\mathcal{F}$. The set of singularities of the foliation $\mathcal{F}$ is $ V(P)\cap V(Q)$. If $(P_xQ_y-P_yQ_x)(p)\neq 0$, for an isolated singularity $p$ of $\mathcal{F}$, then $p$ is called \emph{reduced} singularity. If there is a holomorphic coordinate system $(\tilde{x},\tilde{y})$ in a neighborhood of a reduced singularity $p$ with $\tilde{x}(p)=0$, $\tilde{y}(p)=0$ such that in this coordinate system
$$
\omega\wedge d(\tilde{x}^2+\tilde{y}^2)=0,
$$ 
then the point $p$ is called a \emph{center singularity}. 
The closure of the set of algebraic foliations of fixed degree $d$ with at least one center in $\mathcal{F}(d)$, which is denoted by $\mathcal{M}(d)$, is an algebraic subset of $\mathcal{F}(d)$ (see for instance, \cite{14} and \cite{12}). Identifying irreducible components of $\mathcal{M}(d)$ is the center problem in the context of polynomial differential equations in the real plane.
The complete classification of irreducible components of $\mathcal{M}(2)$ is done by 
H. Dulac in \cite{5} (see also \cite[p.601]{3}). This classification has applications to Hilbert's 16th problem. Ilyashenko in \cite{10}, by computing tangent space at some smooth points of the space of Hamiltonian foliations $\mathcal{F}(df)$,  
$
f\in\mathbb{C}[x,y]_{\leq d+1}
$, proved the following:
\begin{theo}\label{Ilya}
The space of Hamiltonian foliations of degree $d$ forms an irreducible component of $\mathcal{M}(d)$.
\end{theo}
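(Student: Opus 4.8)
The plan is the following. Write $\mathcal{H}\subseteq\mathcal{F}(d)$ for the closure of the set of Hamiltonian foliations $\mathcal{F}(df)$, $f\in\mathbb{C}[x,y]_{\leq d+1}$; I want to show $\mathcal{H}$ is irreducible and is an irreducible component of $\mathcal{M}(d)$. Irreducibility is free: $\mathcal{H}$ is the closure of the image of a morphism from the irreducible affine space $\mathbb{C}[x,y]_{\leq d+1}$, whose fibres are the orbits of the two--dimensional group $f\mapsto\lambda f+\mu$ ($\lambda\in\mathbb{C}^{*}$), so $\dim\mathcal{H}=\binom{d+3}{2}-2$. The inclusion $\mathcal{H}\subseteq\mathcal{M}(d)$ is seen by taking $f$ Morse of degree $d+1$: near a critical point $p$ the holomorphic Morse lemma gives coordinates with $f=\tilde{x}^{2}+\tilde{y}^{2}$, and since $\mathrm{Hess}_{p}f\neq0$ the point $p$ is a reduced center of $\mathcal{F}(df)$. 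To show $\mathcal{H}$ is a component it is enough, following the method of Ilyashenko and Movasati, to work with the variety $\mathcal{N}$ of pairs $(\mathcal{G},q)$ where $q$ is a reduced center of $\mathcal{G}$: its first projection $\pi$ has image the set of foliations with a reduced center, so $\mathcal{M}(d)=\overline{\pi(\mathcal{N})}$ and hence $\dim_{\mathcal{F}(df)}\mathcal{M}(d)\leq\dim_{(\mathcal{F}(df),p)}\mathcal{N}\leq\dim T_{(\mathcal{F}(df),p)}\mathcal{N}$. So the heart of the matter is to show that for a suitably generic $f$ this tangent space has dimension $\dim\mathcal{H}$.

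Fix $f$ Morse of degree exactly $d+1$ with pairwise distinct critical values and no critical point at infinity, and a critical point $p$. A tangent vector to $\mathcal{N}$ at $(\mathcal{F}(df),p)$ is represented by $\omega_{\varepsilon}=df+\varepsilon\eta$ ($\eta\in\Omega_d^1$, $\varepsilon^{2}=0$) for which the center at $p$ persists to first order (the position of the singular point being then determined by $\eta$), so this tangent space is identified with $\{\eta\in\Omega_d^1:\text{the center persists to first order}\}/\mathbb{C}\,df$. Near $p$ the leaves of $\mathcal{F}(df)$ are the vanishing cycles $\delta_{t}\subset f^{-1}(t)$, $t$ near $f(p)$; on a transversal parametrized by the value of $f$ the first--return map of $\mathcal{F}(df)$ is the identity, and by the Poincar\'e--Pontryagin--Melnikov formula its first variation for $\omega_{\varepsilon}$ is, up to sign, the Abelian integral $t\mapsto\oint_{\delta_{t}}\eta$; so the persistence condition is $\oint_{\delta_{t}}\eta\equiv0$. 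I would then promote this to the vanishing of all periods of $\eta$: $t\mapsto\oint_{\delta(t)}\eta$ is analytic, vanishes near $f(p)$, hence vanishes along every analytic continuation around the critical values of $f$, and by the Picard--Lefschetz formulas the monodromy orbit of $\delta_{t}$ generates $H_{1}(f^{-1}(t),\mathbb{Z})$ provided the Dynkin diagram of $f$ (the intersection graph of a distinguished basis of vanishing cycles) is connected, which holds for generic $f$. Thus the persistence condition is equivalent to $\oint_{\gamma}\eta\equiv0$ for all $\gamma\in H_{1}(f^{-1}(t),\mathbb{Z})$.

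The remaining ingredient is a relative de Rham theorem with degree control: for generic $f$ of degree $d+1$, a polynomial $1$--form $\eta$ of degree $\leq d$ all of whose periods vanish is exact, $\eta=dP$ with $\deg P\leq d+1$. Ilyashenko's infinitesimal theorem gives $\eta=dA+B\,df$ for polynomials $A,B$; one then uses $\deg\eta\leq\deg f-1$ to absorb $B\,df$ into $dA$: in $dA+B\,df=\eta$ the top homogeneous part of $B$ must be a constant multiple of a power of the leading form of $f$, and replacing $B$ by $B-cf^{k}$ and $A$ by $A+\frac{c}{k+1}f^{k+1}$ strictly lowers $\deg B$, so after finitely many steps $B$ is constant and $\eta=d(A+\text{const}\cdot f)=dP$ with $\deg P\leq d+1$. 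It follows that $\{\eta\in\Omega_d^1:\text{the center persists to first order}\}=\{dP:\deg P\leq d+1\}$, of dimension $\binom{d+3}{2}-1$, so $T_{(\mathcal{F}(df),p)}\mathcal{N}$ has dimension $\binom{d+3}{2}-2=\dim\mathcal{H}$. Hence $\dim_{\mathcal{F}(df)}\mathcal{M}(d)\leq\dim\mathcal{H}$; combined with $\mathcal{H}\subseteq\mathcal{M}(d)$ and $\mathcal{F}(df)\in\mathcal{H}$ this yields $\dim_{\mathcal{F}(df)}\mathcal{M}(d)=\dim\mathcal{H}$, so no irreducible subvariety of $\mathcal{M}(d)$ properly contains $\mathcal{H}$, i.e.\ $\mathcal{H}$ is an irreducible component.

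I expect the main obstacle to be the interplay of these steps rather than any single computation: setting up $\mathcal{N}$ and the identification of its tangent space so that ``the center persists to first order'' becomes literally ``$\oint_{\delta_{t}}\eta\equiv0$'' (here one must be careful, $\mathcal{M}(d)$ being a priori cut out by non--reduced center equations); pinning down the genericity of $f$ --- Morse, simple critical values, tame at infinity --- under which Picard--Lefschetz theory and connectedness of the Dynkin diagram are available; and importing Ilyashenko's infinitesimal theorem with exactly the degree bounds used above. Granting these, the remaining differential--geometric bookkeeping --- Gelfand--Leray forms, the Melnikov integral, monodromy equivariance of periods --- is routine.
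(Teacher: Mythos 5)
This is the argument the paper attributes to Ilyashenko \cite{9}, and it is the same template the paper itself deploys for its main theorem: identify first-order persistence of the center with vanishing of the Abelian integral $\int_{\delta_t}\eta$, propagate to all periods via the monodromy (connectedness of the Dynkin diagram, cf.\ Theorem \ref{14:37}), and conclude $\eta=dP$ from relative exactness (Proposition \ref{12:43}) together with the degree bound, so that the local dimension of $\mathcal{M}(d)$ at $\mathcal{F}(df)$ equals $\dim\mathcal{H}$. The paper states this theorem as a citation rather than reproving it, so there is nothing substantively different to compare; the one refinement worth noting is that the paper's framework works with the tangent cone (leading coefficients $\omega_k$ of analytic arcs and the $k$-th Melnikov function) rather than with $\varepsilon^2=0$ Zariski tangent vectors, which cleanly sidesteps the non-reducedness issue you flag at the end.
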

H. Movasati in \cite{16}, by computing the tangent cone of $\mathcal{M}(d)$ at a special point proved the following: 
\begin{theo}
The space of $\mathcal{L}(d_1,d_2,\cdots,d_s)$ of logarithmic foliations 
\begin{eqnarray*}
\mathcal{F}(\omega)&\omega=f_1\cdots f_s\sum_{i=1}^s\lambda_i\frac{df_i}{f_i},
\end{eqnarray*} 
$$
f_i\in\mathbb{C}[x,y]_{\leq d_i},\ \ \  \lambda_i\in\mathbb{C}, \ \  \ i=1,2,\cdots ,s,  \ \ \  d=\sum_{i=1}^s d_i -1
$$
is an irreducible component of $\mathcal{M}(d)$.
\end{theo}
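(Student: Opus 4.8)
The plan is to follow the infinitesimal strategy behind Theorem~\ref{Ilya}: fix a well-chosen foliation $\mathcal F_0=\mathcal F(\omega_0)$ in $\mathcal L(d_1,\dots,d_s)$, describe the tangent cone of $\mathcal M(d)$ at $\mathcal F_0$ by means of Picard--Lefschetz theory, and show that it is irreducible of dimension at most $\dim\mathcal L(d_1,\dots,d_s)$. The space $\mathcal L(d_1,\dots,d_s)$ is irreducible, being the image of $\mathbb P(\mathbb C[x,y]_{\le d_1})\times\cdots\times\mathbb P(\mathbb C[x,y]_{\le d_s})\times\mathbb P^{s-1}$ under the morphism $(f_1,\dots,f_s,[\lambda])\mapsto\mathcal F\bigl(f_1\cdots f_s\sum_i\lambda_i\,df_i/f_i\bigr)$; and $\mathcal L(d_1,\dots,d_s)\subseteq\mathcal M(d)$ because $h:=\prod_i f_i^{\lambda_i}$ is a (multivalued) first integral of $\mathcal F(\omega)$ and, for generic $f_i,\lambda$, the form $\omega_0$ has a nondegenerate singularity at a critical point of $h$ lying off $\{f_1\cdots f_s=0\}$, which is a center. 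Granting these facts, an irreducible tangent cone of $\mathcal M(d)$ at $\mathcal F_0$ of dimension $\le\dim\mathcal L(d_1,\dots,d_s)$ forces $\mathcal M(d)$ to have a single local component at $\mathcal F_0$, of dimension exactly $\dim\mathcal L(d_1,\dots,d_s)$; since that component contains the irreducible variety $\mathcal L(d_1,\dots,d_s)$ of the same dimension, it coincides with it, so $\mathcal L(d_1,\dots,d_s)$ is an irreducible component of $\mathcal M(d)$.

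First I would choose $\omega_0$ so that the $f_i$ are in general position: the curves $\{f_i=0\}$ smooth and pairwise transverse, the multivalued function $h$ with only nondegenerate critical points, distinct critical values and a generic fibre $\{h=\tau\}$ as simple as possible, and the $\lambda_i$ generic. Fix a center $p$ of $\mathcal F_0$ together with the continuous family of vanishing cycles $\delta_t\subset\{h=\tau(t)\}$ that bound it as $\tau$ runs near the corresponding critical value. For a deformation $\omega_0+\varepsilon\eta$, $\eta\in\Omega_d^1$, a center near $p$ persists to first order in $\varepsilon$ if and only if the Melnikov integral
\[
M_\eta(t)=\oint_{\delta_t}\frac{\eta}{f_1\cdots f_s}
\]
vanishes identically; this is the logarithmic counterpart of the Poincar\'e--Pontryagin--Melnikov criterion, which I would derive from the holonomy of $\mathcal F(\omega_0+\varepsilon\eta)$ along $\delta_t$ (or cite). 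Hence $T_{\mathcal F_0}\mathcal M(d)\subseteq\{[\eta]:M_\eta\equiv0\}$, and at the special point $\mathcal F_0$ the tangent cone is cut out by the successive higher-order Melnikov functions that appear when $M_\eta$ degenerates identically.

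The heart of the argument is the computation of $\{[\eta]\in\mathbb P(\Omega_d^1):M_\eta\equiv0\}$ by Picard--Lefschetz theory. Because the $f_i$ are in general position, the Picard--Lefschetz formula shows that the monodromy of the family $\delta_t$, encoded in the Dynkin diagram of $h$, generates a large subgroup of $H_1$ of the generic fibre, so $M_\eta\equiv0$ forces the relative de Rham class of $\eta$ to be orthogonal to all of these cycles. Since $h$ is multivalued, the period of $\eta$ along a cycle winding around the branch locus $\{f_i=0\}$ is an iterated integral in Chen's sense attached to the flag of the $f_i$, and by analyzing its vanishing fibre by fibre and across the joins of the Dynkin diagram I expect to obtain
\[
\eta=dg+\frac{g_0}{f_1\cdots f_s}\,\omega_0+\sum_i\eta_i ,
\]
with $dg$ exact, the middle term a polynomial multiple of $\omega_0$, and the $\eta_i$ the contributions supported along the individual divisors $\{f_i=0\}$ --- precisely the terms produced by differentiating $f_i\mapsto f_i+\varepsilon g_i$ and $[\lambda]\mapsto[\lambda+\varepsilon\mu]$ in the definition of $\mathcal L(d_1,\dots,d_s)$. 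Matching these spaces term by term, together with a direct count of $\dim\mathcal L(d_1,\dots,d_s)$ from the parametrization (subtracting the $\mathbb{C}^*$-rescalings of each $f_i$ that leave $\mathcal F(\omega_0)$ unchanged), would complete the dimension estimate and hence the proof.

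The main obstacle is exactly this Picard--Lefschetz step: controlling the iterated integrals attached to the multivalued first integral along the Dynkin diagram and proving that their simultaneous vanishing admits no solutions beyond the logarithmic tangent vectors. Choosing the configuration of the $f_i$ transparently enough that its Dynkin diagram can be read off, and upgrading from first-order to higher-order Melnikov functions at the special point $\mathcal F_0$, are the delicate points; the remaining ingredients --- the persistence criterion, the irreducibility of $\mathcal L(d_1,\dots,d_s)$, and the dimension count --- are comparatively routine.
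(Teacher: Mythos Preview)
The paper does not prove this statement; it is quoted as a known result of Movasati \cite{14} and used only as motivation for the pull-back theorem. So there is no proof in the paper to compare against, only the one-line summary that Movasati proceeds ``by computing the tangent cone of $\mathcal{M}(d)$ at a special point.'' What can be compared is your strategy against the template that the present paper itself carries out for $\mathcal{P}(a,n)$, which is a faithful reproduction of the method of \cite{14}.

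Your outline is in the right spirit but differs in one important choice, and that choice is where the real gap lies. You take $\mathcal F_0$ to be a \emph{generic} logarithmic foliation, with generic $\lambda_i$ and multivalued first integral $h=\prod f_i^{\lambda_i}$, and you propose to run Picard--Lefschetz theory on the fibres of $h$. By contrast, the method of \cite{14} (and the analogous argument in this paper for the pull-back component) chooses the special point $\mathcal F_0=\mathcal F(d(f_1\cdots f_s))$, i.e.\ all $\lambda_i=1$, so that $\mathcal F_0$ is \emph{Hamiltonian} and lies in $\mathcal H(d)\cap\mathcal L(d_1,\dots,d_s)$. This buys a genuine polynomial first integral, hence algebraic fibres, a Brieskorn/Petrov module, and standard Picard--Lefschetz theory; the price is that the tangent cone at $\mathcal F_0$ is \emph{not} irreducible but has two components, $T_{\mathcal F_0}\mathcal H(d)\cup T_{\mathcal F_0}\mathcal L(d_1,\dots,d_s)$, and one must identify both. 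Your expectation that the tangent cone be irreducible is therefore either wrong (if you move to the Hamiltonian base point) or unproven and hard to establish (if you stay at generic $\lambda_i$, where the fibres of $h$ are transcendental and the usual Petrov-module machinery is unavailable).

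Concretely, the step you flag as the ``main obstacle'' --- controlling iterated integrals along cycles of a multivalued $h$ --- is not how \cite{14} proceeds; it is avoided entirely by working at the Hamiltonian point. There, as in the present paper, the first Melnikov function is an ordinary abelian integral over algebraic fibres, its vanishing is analysed via the Petrov module and the monodromy on $H_1$ of the polynomial $f_1\cdots f_s$, and a \emph{second} (higher-order) Melnikov computation is needed to separate the logarithmic directions from the Hamiltonian ones. Your decomposition $\eta=dg+(g_0/f_1\cdots f_s)\omega_0+\sum_i\eta_i$ is morally the right shape for the answer, but you have not indicated any mechanism that forces it, and the one that works in \cite{14} relies on the polynomial first integral you have discarded.
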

Let $\mathcal{P}(a,n)$ be the set of foliation
\begin{equation}
\mathcal{F}(F^*(\omega)) \ \ \   where \ \ \  \omega\in\Omega_a^1 \ , 
\end{equation}
\begin{equation*}
 F:\mathbb{C}^2\rightarrow\mathbb{C}^2 \ \ is\ defined \ by\ (x,y)\rightarrow(R,S) \ \ and \  R,S\in\mathbb{C}[x,y]_{\leq n}   \ , \ n \geq 2.
\end{equation*}
For a generic morphism $F$ and foliation $\mathcal{F}$, there exists a leaf of $\mathcal{F}$ such that it has an intersection with $F(D)$ at some points with multiplicity 2, where $D$ is the curve $V(R_xS_y-R_yS_x)$. Therefore, $F^*(\mathcal{F})$ has a center singularity.
In this paper we are going to show the following;
\begin{theo}\label{1:28}
The space $\mathcal{P}(a,n)$ of pull-back differential equations 
$$
\mathcal{F}(\omega) \ , \ \omega=P(R,S)dS-Q(R,S)dR ,
$$
where
$$
R,S\in\mathbb{C}[x,y]_{\leq n} \ , \ P,Q \in \mathbb{C}[x,y]_{\leq a}
\ , \ d=an+n-1 \ , \ n \geq 2,
$$
forms an irreducible component of $\mathcal{M}(d)$.
\end{theo}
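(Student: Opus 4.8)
The plan is to follow the Ilyashenko--Movasati scheme. The set $\mathcal{P}(a,n)$ is the image of the irreducible affine space of quadruples $(P,Q,R,S)$ under the morphism $(P,Q,R,S)\mapsto\mathcal{F}\bigl(P(R,S)\,dS-Q(R,S)\,dR\bigr)$ into $\mathcal{F}(d)$, so its closure $\overline{\mathcal{P}(a,n)}$ is irreducible, and its dimension equals that of the parameter space minus the dimension of the generic fibre, which I would identify with the action of the affine automorphism group of $\mathbb{C}^2$ on $(R,S)$ (compensated by a linear change of $(P,Q)$) together with the scalings of $(P,Q)$ and the projectivization. The inclusion $\overline{\mathcal{P}(a,n)}\subseteq\mathcal{M}(d)$ is the observation recorded just before the statement: for generic $F=(R,S)$ and $\omega\in\Omega_a^1$ a leaf of $\mathcal{F}(\omega)$ is simply tangent to the critical curve $F(D)$, in suitable coordinates $F$ is a fold there, so $\mathcal{F}(F^*\omega)$ has a nondegenerate Morse first integral at the corresponding point and hence a center; since $\mathcal{M}(d)$ is closed this yields $\overline{\mathcal{P}(a,n)}\subseteq\mathcal{M}(d)$. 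It therefore remains to produce a point $\mathcal{F}_0\in\mathcal{P}(a,n)$ at which the tangent cone of $\mathcal{M}(d)$ has an irreducible component equal to $T_{\mathcal{F}_0}\overline{\mathcal{P}(a,n)}$ of dimension $\dim\overline{\mathcal{P}(a,n)}$; the standard dimension-theoretic argument of Ilyashenko and Movasati then forces $\overline{\mathcal{P}(a,n)}$ to be a component.

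For the local analysis I would take $\mathcal{F}_0:=\mathcal{F}(df_0)$ with $f_0:=f\circ F_0$, where $f\in\mathbb{C}[x,y]_{\le a+1}$ and $F_0=(R_0,S_0)$, $R_0,S_0\in\mathbb{C}[x,y]_{\le n}$, are chosen generic; then $\mathcal{F}_0\in\mathcal{P}(a,n)$ (it is the pull-back of the Hamiltonian foliation $\mathcal{F}(df)$) and $\mathcal{F}_0$ is itself Hamiltonian, with polynomial first integral $f_0$. The critical points of $f_0$ are the preimages $F_0^{-1}(\operatorname{Crit} f)$ together with the fold--tangency points on the critical curve $D_0$ of $F_0$; genericity makes them all Morse, hence centers of $\mathcal{F}_0$. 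In particular $\mathcal{F}_0$ lies also on Ilyashenko's Hamiltonian component, so $\mathcal{M}(d)$ is reducible at $\mathcal{F}_0$ and one must work with the tangent cone rather than the Zariski tangent space. I would fix one fold--tangency center $p_0$ and the family $\{\delta_c\}$ of cycles of $f_0$ vanishing at $p_0$, and check, by examining the differential of the parametrization at the corresponding $(P_0,Q_0,R_0,S_0)$, that $\mathcal{F}_0$ is a smooth point of $\overline{\mathcal{P}(a,n)}$, so that $T_{\mathcal{F}_0}\overline{\mathcal{P}(a,n)}$ has dimension $\dim\overline{\mathcal{P}(a,n)}$.

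For a one-parameter analytic deformation $\omega_t=df_0+t\omega_1+O(t^2)$ staying in $\mathcal{M}(d)$ and keeping a center near $p_0$, the first variation of the return map forces the Melnikov integral $M_1(c)=\oint_{\delta_c}\omega_1$ to vanish identically; by analytic continuation and the Picard--Lefschetz formula this is equivalent to $\oint_\gamma\omega_1=0$ for every $\gamma$ in the orbit $\mathcal{O}$ of $\delta_{p_0}$ under the monodromy group of $f_0$ on $H_1$ of a generic fibre. Here the Dynkin diagram of $f_0=f\circ F_0$ enters: since $f_0$ factors through the branched cover $F_0$ and $F_{0*}\delta_{p_0}$ is null-homologous in the fibre of $f$, the orbit $\mathcal{O}$ lies in $\ker F_{0*}$, and the structural point to establish is that for generic $f,F_0$ the fold cycles span $\ker F_{0*}$ up to finite index, so that the annihilator $\operatorname{Ann}(\mathcal{O})=\{\omega_1\in\Omega_d^1:\oint_\gamma\omega_1=0\ \forall\gamma\in\mathcal{O}\}$ consists, modulo relatively exact forms $dA+B\,df_0$, exactly of the pulled-back forms $F_0^*\eta$ ($\eta\in\Omega_a^1$), the forms obtained by varying $F_0$, and the exact forms $dh_1$; the first two pieces recover $T_{\mathcal{F}_0}\overline{\mathcal{P}(a,n)}$ and the last recovers the Hamiltonian directions. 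Thus the first-order locus is the linear span $T_{\mathcal{F}_0}\overline{\mathcal{P}(a,n)}+T_{\mathcal{F}_0}(\text{Hamiltonian component})$, strictly larger than $T_{\mathcal{F}_0}\overline{\mathcal{P}(a,n)}$, and it remains to show that the extra directions do not prolong inside $\mathcal{M}(d)$. This is where the iterated integrals come in: when $M_1\equiv 0$ the second-order obstruction to continuing the center at $p_0$ is a Fran\c{c}oise-type iterated integral $M_2(c)=\oint_{\delta_c}(q_1\,\omega_1+\cdots)$ built from $df_0$, $\omega_1$ and primitives, and a computation with the same Dynkin-diagram data should show that $M_2$ vanishes identically only on the union $T_{\mathcal{F}_0}\overline{\mathcal{P}(a,n)}\cup T_{\mathcal{F}_0}(\text{Hamiltonian component})$. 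Hence this union is the tangent cone of $\mathcal{M}(d)$ at $\mathcal{F}_0$, $T_{\mathcal{F}_0}\overline{\mathcal{P}(a,n)}$ is one of its irreducible components of the full dimension $\dim\overline{\mathcal{P}(a,n)}$, and, together with the irreducibility of $\overline{\mathcal{P}(a,n)}$ and the inclusion $\overline{\mathcal{P}(a,n)}\subseteq\mathcal{M}(d)$, this shows $\overline{\mathcal{P}(a,n)}$ is an irreducible component of $\mathcal{M}(d)$.

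I expect the main obstacle to be the middle step: obtaining an explicit, usable description of the monodromy orbit $\mathcal{O}$ of the fold vanishing cycle and of the space $\operatorname{Ann}(\mathcal{O})$ for the non-generic polynomial $f_0=f\circ F_0$. This requires understanding how the Dynkin diagram of a composition is assembled from that of $f$ together with the ramification data of $F_0$, proving that the ``new'' cycles created by the cover (the fold cycles) are connected up by the monodromy so as to generate $\ker F_{0*}$, and verifying that generic $f$ and $F_0$ make $f_0$ cohomologically tame so that Picard--Lefschetz theory applies cleanly; the second-order analysis via iterated integrals and the final passage from the tangent-cone statement to the component statement are technically delicate but structurally routine once this lattice picture is established.
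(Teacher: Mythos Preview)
Your proposal is essentially the paper's approach: base point $\mathcal{F}_0=\mathcal{F}(d(f\circ F))$ in $\mathcal{P}(a,n)\cap\mathcal{H}(d)$, first nonvanishing Melnikov to get $\omega=F^*\alpha+dK$ via the identification of the monodromy orbit of a tangency cycle with $\ker F_*$, the iterated-integral next Melnikov to force $K$ into the form $R_1f_x(R,S)+S_1f_y(R,S)$, and the tangent-cone dimension argument to conclude.

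Two execution points where the paper differs from your sketch and which you should incorporate: (i) the paper does \emph{not} take $F$ and $f$ generic but the very special separated-variable choices $F(x,y)=(R(x),S(y))$ and $f(x,y)=g(x)+h(y)$ with real Morse factors satisfying explicit ordering conditions; this is exactly what makes the Dynkin diagram of $f\circ F$ a join of two one-variable diagrams and allows the explicit verification that the monodromy orbit of a tangency vanishing cycle generates $\ker F_*$ (your acknowledged ``main obstacle'' is solved by this special choice, not by genericity); (ii) for the tangent \emph{cone} one must allow analytic arcs $d(f\circ F)+\epsilon^k\omega_k+\cdots$ with arbitrary $k\ge1$, and the paper explicitly notes that taking $k=1$ is not sufficient, so the relevant Melnikov pair is $(M_k,M_{2k})$ rather than $(M_1,M_2)$---the computation is formally identical since $\omega_1=\cdots=\omega_{k-1}=0$, but your write-up should track this.
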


This paper is inspired by Ilyashenko's  paper \cite{10} and H. Movasati's paper \cite{16}. A sketch of our proof is the following: \\ 
Consider a generic $F$ and a generic polynomial $f\in\mathbb{C}[x,y]$ of degree $a+1$. It is clear that the point $\mathcal{F}:=\mathcal{F}(d(f\circ F))$ is in the intersection of $\mathcal{H}(an+n-1)$ and $\mathcal{P}(a,n)$ of the algebraic set $\mathcal{M}(an+n-1)$.
It is needed to show that the tangent
cone of $\mathcal{M}(an+n-1)$ at the point $\mathcal{F}$ is equal to $T_{\mathcal{F}}{\mathcal{H}}(an+n-1)\cup T_{\mathcal{F}}{\mathcal{P}}(a,n)$, in order to prove Theorem \ref{1:28}. 
This paper is organized as follows:

In \S 1, by taking a deformation 
$d(f\circ F)+\epsilon^k\omega_k+\epsilon^{k+1}\omega_{k+1}+\cdots+\epsilon^{2k}\omega_{2k}+h.o.t,$ of $d(f\circ F)$, where $\omega_k\neq0$, and using Petrov module concept, we show that there is a polynomial 1-form $\alpha\in\Omega^1$ of degree $a$ and a polynomial $K\in\mathbb{C}[x,y]$ such that $\omega_k$ is of the form $F^*(\alpha)+dK$.  
 
In \S 2, we calculate the explicit form of $dK$, by using the iterated integral and Melnikov function $M_{2k}$. This gives us the proof of Theorem \ref{1:28}.

In \S 3, we see some applications of theorem \ref{1:28}. We find a maximum lower bound for the cyclicity of a tangency vanishing cycle in a deformation $\mathcal{F}$ inside $\mathcal{F}(d)$, which is dependent on a factorization of $d$ into a product of two natural numbers.

In  \S 4, we study the action of the monodromy group on a tangency vanishing
cycle in a regular fiber $f\circ F$. \\

\section{Pull-back of differential equations }
Inspired by H. Movasati's method (see \cite{16}), we will calculate the tangent cone of $\mathcal{M}(n(a+1)-1)$ at a point in the intersection of Hamiltonian and pull-back algebraic differential equations. Similar to \cite{16} and \cite{10} our methods are based on Picard-Lefschetz theory for a foliations with a first integral.

Let $\mathcal{F}:=\mathcal{F}(\omega)\in\mathcal{F}(a)$ be a foliation of degree $a$, and $F=(R,S):\mathbb{C}^2\rightarrow\mathbb{C}^2$ be a morphism, where  $R,S\in\mathbb{C}[x,y]_{\leq n}$ and  $n\geq 2$. If a point $q$ is a tangent point of $F(D)$ and a leaf of the foliation $\mathcal{F}$, then a point in $F^{-1}(q)$ becomes a center singularity and it is called a \emph{tangency} critical point of the foliation $F^*(\mathcal{F})$.
\begin{theo}\label{Main theorem}
Consider the deformation $\mathcal{F}_{\epsilon}:=\mathcal{F}(\omega_\epsilon)$
$$
\omega_{\epsilon}=F^{*}(\omega)+\epsilon \omega_1+\cdots \ , \ deg(\mathcal{F}_{\epsilon})\leq d \ , \ d=an+n-1, \ n\geq 2
$$ 	
of the foliation $F^{*}(\mathcal{F}(\omega))$. Let $p$ be one of the tangency critical points of foliation $F^{*}(\mathcal{F}(\omega))$. For a generic \footnote{By generic we mean always a non-empty Zariski open subset of the ambient space. } choice of $\omega$ and $F$, if the deformed foliation $\mathcal{F}_{\epsilon}$ for all small $\epsilon $ has a center singularity near $p$, then $\mathcal{F}_{\epsilon}$
 is also a pull-back foliation. More precisely, there is a foliation $\tilde{\mathcal{F}}_{\epsilon}\in\mathcal{F}(a)$ and a polynomial map $F_{\epsilon}=(R_{\epsilon},S_{\epsilon}):\mathbb{C}^2\rightarrow\mathbb{C}^2$ 
 $R_{\epsilon} , S_{\epsilon}\in\mathbb{C}[x,y]_{\leq n}$ such that 
 $$
 F_{\epsilon}^{*}\tilde{\mathcal{F}}_{\epsilon}=\mathcal{F}_{\epsilon} \ , \ F_{0}=F \ \ , \ \ \mathcal{F}_0=\mathcal{F}.
 $$    
\end{theo}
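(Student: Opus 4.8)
The plan is to follow the Ilyashenko--Movasati scheme: deform the Hamiltonian pull-back $\omega_0=d(f\circ F)$, where $f$ has degree $a+1$, and show that if the deformation keeps a center near the tangency point $p$, then the deformation stays (infinitesimally, and then to all orders) inside the pull-back family. Write the deformation as $\omega_\epsilon=\omega_0+\epsilon^k\omega_k+\epsilon^{k+1}\omega_{k+1}+\cdots$ with $\omega_k\neq 0$ the first nonzero term. The center condition says that the holonomy around the vanishing cycle $\delta_t$ attached to $p$ is the identity for all small $\epsilon$; expanding in $\epsilon$, the first nontrivial Melnikov-type integral $\int_{\delta_t}\omega_k$ must vanish identically in $t$.

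\textbf{Step 1: the infinitesimal (first-order) equation.}
First I would show that $\int_{\delta_t}\omega_k \equiv 0$ forces $\omega_k = F^*(\alpha)+dK$ for some $\alpha\in\Omega^1_a$ and some polynomial $K$. This is exactly the content promised in \S 1 of the paper, and it is where the Petrov module of the polynomial $h=f\circ F$ (equivalently, of $f$ via the fibration structure of $F$) enters: the $1$-forms with zero integral over \emph{every} cycle in the generic fiber are, modulo relatively exact forms, spanned by $dh\wedge(\text{something})$; but the hypothesis is weaker --- vanishing only over the tangency cycle $\delta_t$. The key point is that the monodromy orbit of $\delta_t$ (studied in \S 4 via the Dynkin diagram of $h$) is large enough --- it generates, together with the Picard--Lefschetz formula, a sublattice on which the period map is controlled --- so that vanishing on $\delta_t$ alone already pins down $\omega_k$ up to $F^*\Omega^1_a + d\mathbb{C}[x,y]$. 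Here I would invoke genericity of $F$ (so that $D=V(R_xS_y-R_yS_x)$ is smooth and meets the leaves transversally except at simple tangencies) and of $f$ (so that $h$ is a Morse-type polynomial with connected Dynkin diagram and distinct critical values).

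\textbf{Step 2: killing the exact part and integrating to higher order.}
Given $\omega_k=F^*(\alpha)+dK$, I would next show $dK$ can be absorbed. Modulo a change of the linear part of $F$ and of $\alpha$, the term $dK$ corresponds to deforming $f$ and the components $R,S$; concretely, $d(f\circ F)$ deformed by $\epsilon^k\,dK$ is, to first order, $d\big((f+\epsilon^k g)\circ F_{\epsilon^k}\big)$ for suitable $g$ and suitable first-order deformation of $F$ --- this uses the explicit iterated-integral computation of the Melnikov function $M_{2k}$ announced for \S 2 to identify $K$ with a pull-back-type perturbation. Once the first nonzero term is shown to be tangent to $\mathcal{P}(a,n)$, I would run the standard induction: replace $\omega_0$ by the new pull-back form $F_{\epsilon^k}^*\tilde{\mathcal F}_{\epsilon^k}$, re-expand, and repeat, at each stage improving the order of agreement by one. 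Because $\mathcal{P}(a,n)$ is given by polynomial equations and the matching is formal in $\epsilon$, convergence/formal-to-analytic passage is handled by the usual Artin-type or direct-estimate argument; the conclusion is the existence of $\tilde{\mathcal F}_\epsilon\in\mathcal F(a)$ and $F_\epsilon=(R_\epsilon,S_\epsilon)$ with $F_\epsilon^*\tilde{\mathcal F}_\epsilon=\mathcal F_\epsilon$ and $F_0=F$, $\tilde{\mathcal F}_0=\mathcal F$.

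\textbf{Main obstacle.}
I expect the hard part to be Step 1 --- specifically, upgrading ``period of $\omega_k$ vanishes on the single tangency cycle $\delta_t$ for all $t$'' to ``$\omega_k \in F^*\Omega^1_a + d\mathbb{C}[x,y]$''. Unlike the Hamiltonian case in \cite{9}, here the relevant cycle is a \emph{tangency} vanishing cycle of the composite polynomial $f\circ F$, whose monodromy orbit and intersection pattern with the other vanishing cycles (the ``pull-back'' cycles coming from critical points of $F$ and the ``$f$-cycles'') must be understood well enough to run a Picard--Lefschetz / Dynkin-diagram argument showing that the annihilator of this orbit in the Petrov module is precisely the pull-back submodule. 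Controlling the orbit under the monodromy requires the connectedness of the relevant part of the Dynkin diagram of $f\circ F$, which is where genericity of both $F$ and $f$ is used most heavily; this is presumably the technical heart carried out in \S 4 and fed back into \S 1.
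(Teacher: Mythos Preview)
Your Step~1 and the first half of Step~2 (using $M_{2k}$ via iterated integrals to pin down the exact term $dK$) match the paper's Theorem~\ref{23:24} and Lemma~\ref{13:17} closely, including the identification of the hard point as the monodromy orbit of the tangency cycle (the paper's Theorem~\ref{18:00} shows this orbit generates exactly $\ker F_*$). Where you diverge is in the passage from the infinitesimal statement to the full conclusion. Your ``standard induction: replace $\omega_0$ by the new pull-back form $F_{\epsilon^k}^*\tilde{\mathcal F}_{\epsilon^k}$, re-expand, and repeat'' has a gap: once you absorb $\omega_k$ into a pull-back, the new base point is an $\epsilon$-dependent (or at best a generic, non-Hamiltonian) element of $\mathcal P(a,n)$. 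The entire machinery you invoked in Step~1 --- the Petrov module of $f\circ F$, the Melnikov integrals over fibers of a first integral, the iterated-integral formula for $M_{2k}$ --- is available only because the base foliation is Hamiltonian, $d(f\circ F)$. At a non-integrable pull-back you have no fibration, no Petrov module, and no Melnikov hierarchy, so the induction does not restart. The Artin-type remark does not repair this.

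The paper sidesteps the induction entirely. It observes that Theorem~\ref{Main theorem} is equivalent to Theorem~\ref{1:28} (that $\mathcal P(a,n)$ is an irreducible component of $\mathcal M(d)$), and proves the latter by a tangent-\emph{cone} argument at the single Hamiltonian point $\mathcal F_0=\mathcal F(d(f\circ F))$. The computation you outlined (Theorem~\ref{23:24} plus Lemma~\ref{13:17}) is carried out once, for arbitrary leading order $k$, and yields $TC_{\mathcal F_0}\mathcal M(d)=T_{\mathcal F_0}\mathcal P(a,n)\cup T_{\mathcal F_0}\mathcal H(d)$ (Corollary~\ref{12:18}). Since the tangent cone of an irreducible germ has pure dimension equal to that of the germ, any irreducible component $X'\supseteq\mathcal P(a,n)$ of $(\mathcal M(d),\mathcal F_0)$ has $TC_{\mathcal F_0}X'$ trapped inside this two-piece union; irreducibility and dimension force $\dim X'=\dim\mathcal P(a,n)$, hence $X'=\mathcal P(a,n)$. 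This algebraic-geometric step replaces your induction and is what you are missing.
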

Note that Theorem \ref{Main theorem} is equivalent to Theorem \ref{1:28}. This theorem is proved in the section \ref{18:51:03}.
\subsection{Tangent space }\label{14:02} 
The set $\mathcal{P}(a,n)$ is an irreducible algebraic subset of $\mathcal{M}(an+n-1)$ (by taking the coefficient of the  polynomials as coordinates of the  map from the space of polynomials with degree $an+n-1$ to the projective space). We are going to show that $\mathcal{P}(a,n)$ is also a component of $\mathcal{M}(an+n-1)$. Let us take a point $\mathcal{F}$ of $\mathcal{P}(a,n)$, then make a deformation $\mathcal{F}_{\epsilon}\in\mathcal{P}(a,n)$ and calculate the tangent vector space of $\mathcal{P}(a,n)$ at $\mathcal{F}$:
\begin{equation}\label{13:59:03}
 \begin{split}
 F_{\epsilon}^{*}(\omega_{\epsilon})&=(F+\epsilon F_1)^{*}(\omega+\epsilon  \alpha_1)+O(\epsilon^2)\\
 &=F^{*}(\omega)+\epsilon W + O(\epsilon^2),
 \end{split}
\end{equation}
where
\begin{gather*}
W=P(R,S)dS_1-Q(R,S)dR_1 +\\
R_1(\frac{\partial P}{\partial x}(R,S)dS-\frac{\partial Q}{\partial x}(R,S)dR)+S_1(\frac{\partial P}{\partial y}(R,S)dS-\frac{\partial Q}{\partial y}(R,S)dR)+F^{*}(\alpha_1).
\end{gather*}
For a smooth point $\mathcal{F}$ of $\mathcal{P}(a,n)$,
the tangent space of $\mathcal{P}(a,n)$ at $\mathcal{F}$ is just the set of all vectors $W$, which is contained 
in the tangent space of $\mathcal{M}(d)$ at $\mathcal{F}$, and in order to prove our main theorem it is enough to prove that the equality happens.  
Now, we are going to compute the tangent cone of $\mathcal{M}(an+n-1)$ at a point in the intersection of Hamiltonian
component and the set $\mathcal{P}(a,n)$.
\subsection{A foliation in the intersection of two algebraic sets}\label{15:46} 
A function $f:\mathbb{C}^2\rightarrow\mathbb{C}$ is called Morse if f has only non-degenerate critical points with distinct critical values.  
Let $F:=\mathbb{C}^2\rightarrow\mathbb{C}^2$ be defined by 
\begin{eqnarray}\label{12:35}
(x,y)\mapsto(R(x),S(y)):=\left(\prod_{i=1}^n(x-t_i)\ , \ \prod_{j=1}^n(y-t_j')\right),
\end{eqnarray}
where  $t_i \ , \ t_j'\in\mathbb{R}^{\geq 0}$  , $R$ and $S$ are Morse functions. Let $g $ and $h$ be two polynomials of degree $a+1$ defined by 
\begin{eqnarray}\label{13:08}
g(x):=\prod_{i=1}^{a+1}(x-s_i), &and& h(y):=\prod_{j=1}^{a+1}(y-s_j'),
\end{eqnarray}
and assume that they meet the following conditions:
\begin{enumerate}
\item For every $i$ and $j$ the roots $s_i \ , s_j' $ are positive real numbers.
\item Both equations $R(x)=s_i$ and $S(y)=s_j'$ have $n$ real roots.
\item The functions $g,h $ are \emph{Morse} polynomials and $ g\circ R$ and $h\circ S$ have only non-degenerate critical points with simple roots.
\item If $p$ is a critical point of $R$ (resp. $S$) and $q\in R^{-1}(q_1)$ (resp. $q\in S^{-1}(q_1)$) where $q_1$ is a critical point of $g$ (resp. $h$), then $|g\circ R(p)|>|g\circ R(q)|$ (resp. $|h\circ S(p)|>|h\circ S(q)|$)\footnote{This condition is needed for computation of the intersection form of vanishing cycles in Theorem \ref{14:06}}. This means that the critical values of $g$ are more close to zero than the critical values of $R$. In fact, by moving the roots of $g$ and $h$ on the real line this is the assumable definition.  
\end{enumerate} 
 Let  $f\in\mathbb{C}[x,y]_{\leq a+1}$ be defined by 
\begin{eqnarray}\label{12:51} 
 f(x,y):=g(x)+h(y). 
\end{eqnarray} We can assume that the intersection of the set of the critical values of $g\circ R$ and $h\circ S$ is empty. 
Note that all the singularities of the foliation $\mathcal{F}_0$ are real center type, indeed, they divide into three groups:
\begin{enumerate}
\item Pull-back of centers of $\mathcal{F}(df)$ under $F$,
\item Tangency critical points which are the preimage of the tangent points of leaves of the foliation $\mathcal{F}(df)$ with $F(V(R_x\cdot S_y))$, 
\item The points in $V(R_x)\cap V(S_y)$. 
\end{enumerate}   
Let $X(a,n)$ be the irreducible component of $\mathcal{M}(an+n-1)$ containing $\mathcal{P}(a,n)$.\\
Consider the deformation 
\begin{eqnarray}\label{17:30}
 \omega_{\epsilon}:d(f\circ F)+\omega_k\epsilon^{k}+\omega_{k+1}\epsilon^{k+1}+\cdots \ , \ deg(\omega_i)\leq d,
\end{eqnarray}
of $d(f\circ F)$.
Assume that $\mathcal{F}_{\epsilon}:=\mathcal{F}(\omega_\epsilon)$ belongs to $X(a,n)$.
This implies that $\mathcal{F}_{\epsilon}$ always has a center singularity near a fixed tangency center $p$ of $\mathcal{F}_0$. The set of all differential forms $\omega_k$
 is the tangent cone of $M(an+n-1)$ at $\mathcal{F}$. Note that taking $k=1$ is not sufficient for calculating the tangent cone. 
 
Let $\delta_t$ be a continuous family of vanishing cycles around a tangency critical point $p$ and $\Sigma$ be a transverse section to 
$\mathcal{F}$ at some point of $\delta_t$. We are able to write the Taylor expansion of the deformed holonomy $h_{\epsilon}(t)$
$$
h_{\epsilon}(t)-t=M_1(t)\epsilon+M_2\epsilon^2+\cdots+M_i(t)\epsilon^i+\cdots
.$$
Here $M_i(t)$ is the i-th Melnikov function of the deformation. Since $\omega_i=0$ for $0\leq i\leq k-1$, then 
$$
M_1=M_2=\dots=M_{k-1}=0.
$$       
If $\Sigma $ is parametrized by the image of $f$, i.e., $t=f(z)$ , $z\in\Sigma$ then 
\begin{equation} \label{15/09}
M_k(t)=-\int_{\delta_t}\omega_k \ .
\end{equation}
See for instance \cite{6}. Since $\omega_\epsilon$ is a deformation of $d(f\circ F)$ then it has center near tangency critical point $p$. Therefore $h_\epsilon$ is identity and $M_k(t)=0$. In fact, if $\lambda\in\pi_1(\mathbb{C}\setminus C,b)$ is a monodromy then by using analytic continuation
we can have 
$$\int_{\lambda(\delta_t)}\omega_k=0.$$
Here, $C$ is the set of critical values of the function $f\circ F$.
\begin{theo}\label{1:29:1}
The morphism $F_*:H_1((f\circ F)^{-1}(b),\mathbb{Z}) \rightarrow H_1(f^{-1}(b),\mathbb{Z})$ is surjective and $ker(F_*)$ is a group generated by the action monodromy group $\pi_1(\mathbb{C}\setminus C,b)$ on a vanishing cycle around a tangency point. 
\end{theo}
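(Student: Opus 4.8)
The plan is to analyze the map $F=(R,S)$ as a composition of "one-variable" branched covers and to use the product structure of the fibers. Since $f(x,y)=g(x)+h(y)$ and $f\circ F=g\circ R(x)+h\circ S(y)$, both fibrations $f^{-1}(b)$ and $(f\circ F)^{-1}(b)$ are of "join" type: the fiber $f^{-1}(b)$ is, up to homotopy, the join of the fibers $\{g=c\}$ (points) and $\{h=b-c\}$, and similarly for $f\circ F$ with $g\circ R$, $h\circ S$ in place of $g,h$. Concretely, $H_1$ of such a fiber is identified (via the Thom--Sebastiani / iterated-integral picture used in the paper) with the tensor product of the reduced $H_0$'s of the one-variable fibers; this reduces everything to understanding the one-variable branched covers $R\colon \C\to\C$ and $S\colon\C\to\C$ over their critical value sets.

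First I would establish surjectivity. On $H_0$-level, $R_*$ sends the reduced homology of a generic fiber $R^{-1}(c)$ (which has $n$ points, so rank $n-1$) onto the reduced homology of $g^{-1}(c)$-type data; more precisely one must show that the induced map on the relevant vanishing homology of $g\circ R$ surjects onto that of $g$. This follows from condition (2) in \S\ref{15:46} (every equation $R(x)=s_i$ has $n$ real roots) together with condition (4), which guarantees that the critical values of $g$ are "inside" those of $R$, so that a vanishing cycle of $g$ at $s_i$ lifts: its preimage under $R$ is a union of $n$ arcs, whose classes span a complement to $\ker F_*$ and map isomorphically down. Taking the tensor product of the two one-variable statements (for $R$ with $g$, and for $S$ with $h$) and using that $F_*$ respects the product decomposition gives surjectivity of $F_*$ on $H_1$.

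Next, the kernel. The generators of $\ker R_*$ are the differences of sheets of the cover $R$ that become homologous downstairs; by Picard--Lefschetz these are exactly the cycles vanishing along the $n-1$ critical values of $R$ itself (the critical points of $R$ lie over the points of $V(R_x)$, and under $F$ a neighborhood of such a point is the local model of a tangency). The action of the monodromy $\pi_1(\C\setminus C,b)$ — where $C$ is the full critical-value set of $f\circ F$ — on a single such tangency vanishing cycle spreads it over all of $\ker R_*$: loops around the critical values of $g\circ R$ coming from critical values of $g$ act by the one-variable Picard--Lefschetz formula and permute/mix the sheets of $R$, which is transitive enough (again using Morse-ness, condition (3), and the genericity making the monodromy "as large as possible") to generate the whole sheet-difference lattice. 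Combining with the analogous statement for $S$, and invoking Theorem \ref{1:29:1}'s hypothesis that the relevant pieces of $C$ are disjoint (the critical values of $g\circ R$ and $h\circ S$ don't coincide), one gets that $\ker F_*$ on $H_1$ is generated by the $\pi_1(\C\setminus C,b)$-orbit of a single tangency vanishing cycle.

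The main obstacle I expect is the transitivity claim in the last paragraph: showing that the monodromy orbit of one tangency cycle is \emph{all} of $\ker F_*$, not just a proper sublattice. This is where genericity of $F$ must be used in an essential way — one needs the monodromy group of the one-variable cover $R$ (as a subgroup of $S_n$ acting on sheets) to be large enough (e.g. containing a transposition and acting transitively, hence generating the relevant reflection group) so that the orbit of one "short" vanishing cycle spans the root lattice $\ker R_*\cong A_{n-1}$. I would handle this by choosing $R$ with distinct critical values and simple branching (Morse), deducing that its monodromy is the full symmetric group by a standard connectedness-of-Hurwitz-space argument, and then quoting the fact that in type $A_{n-1}$ the Weyl-group orbit of any root generates the root lattice. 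The product/Thom--Sebastiani bookkeeping to pass from the one-variable statements to the statement for $f\circ F$ is then routine but needs to be written carefully to keep track of which cycles are "tangency" cycles.
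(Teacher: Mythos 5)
Your overall frame---reduce to the one-variable covers $R$ and $S$ via the join description of the fibers, prove surjectivity there, and identify $\ker F_*$ with the monodromy orbit of one tangency cycle---matches the paper's strategy (Lemma \ref{16:42} and its corollary for one variable, Theorem \ref{asl} for the two-variable transitivity, Theorem \ref{18:00} for the conclusion). Surjectivity is unproblematic. But the step you yourself flag as the main obstacle is where the plan has a genuine gap, for two reasons. First, the transitivity you need is not that of the deck/sheet monodromy of the cover $R$ acting on $n$ sheets (where ``monodromy is $S_n$, and the Weyl orbit of a root generates the $A_{n-1}$ lattice'' would apply); it is the action of $\pi_1(\C\setminus (C\cup\tilde C),b)$ on $H_1((f\circ F)^{-1}(b),\Z)$ starting from a \emph{single} tangency cycle. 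Since $F_*=R_*\otimes S_*$ on $\tilde H_0\otimes\tilde H_0$, the kernel is $\ker R_*\otimes\tilde H_0+\tilde H_0\otimes\ker S_*$, which contains, besides the sheet differences $\delta_c^i*\gamma_a^j-\delta_c^{i'}*\gamma_a^{j'}$, the tangency cycles in the other variable $\delta_c*\gamma_{\tilde a}$ and the \emph{exceptional} cycles $\delta_{\tilde c}*\gamma_{\tilde a}$ coming from $V(R_x)\cap V(S_y)$. Your plan never mentions the exceptional cycles, and ``combining with the analogous statement for $S$'' does not reach them: the hypothesis hands you the orbit of one cycle, not two independent orbits. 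The paper gets from one tangency cycle to all three types of kernel classes by explicitly computing the intersection matrix/Dynkin diagram of $g\circ R$ and of $f\circ F$ (Theorems \ref{7:15} and \ref{14:06}) and running a connectedness argument on that diagram; some substitute for that computation is unavoidable, and a statement about the monodromy of $R$ alone will not supply it.

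Second, even granting that the orbit reaches representatives of every type, you must still show it generates \emph{all} of $\ker F_*$ rather than a proper (e.g.\ finite-index) sublattice. The paper closes this with a rank count, $\mathrm{null}(F_*)=(na+n-1)^2-a^2$, matched against $a^2(n^2-1)$ independent sheet differences plus $2na(n-1)+(n-1)^2$ independent tangency and exceptional cycles. Your tensor-product bookkeeping could in principle replace this count, but as written the plan stops before that verification. (A smaller point: your sentence identifying the generators of $\ker R_*$ with ``the cycles vanishing along the $n-1$ critical values of $R$'' is off---those span only a rank-$(n-1)$ piece of the rank-$(a+1)(n-1)$ kernel; the rest must be produced by the monodromy around the critical values of $g$, which is exactly the content of Lemma \ref{16:42}.)
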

We will prove this theorem at the end of  \S 4, see Theorem \ref{18:00}. 
\subsection{Brieskorn/Petrov modules}
Consider the Brieskorn Modules/Petrov module 
\begin{eqnarray*}
H_f:=\frac{\Omega_{\mathbb{C}^2}^1}{df\wedge\Omega_{\mathbb{C}^2}^0+d\Omega_{\mathbb{C}^2}^0} & and &
H_{f\circ F}:=\frac{\Omega_{\mathbb{C}^2}^1}{d(f\circ F)\wedge\Omega_{\mathbb{C}^2}^0+d\Omega_{\mathbb{C}^2}^0}
\end{eqnarray*} 
where $H_f$ and $H_{f\circ F}$ are $\mathbb{C}[s]$-module and $\mathbb{C}[s']$-module 
respectively (here $s=f$ and $s'=f\circ F$ , and also $\Omega_{\mathbb{C}^2}^i$, $i=0,1$ are the set of polynomial differential forms in $\mathbb{C}^2$). 
\begin{defi}
A polynomial $l\in\mathbb{C}[x,y]$ of degree $d$ with homogeneous leading part $l_d$ is called \emph{transversal} to infinity, if $l_d$ factors out as the product of d pairwise different linear forms.   
\end{defi}   

Consider the Milnor module 
$$
V_{l_d}=\frac{\mathbb{C}[x,y]}{<(l_d)_x,(l_d)_y>},
$$ 
with the basis $\{g_\beta|\beta\in I_l\}$ where $g_\beta=x^{\beta_1}y^{\beta_2}$ and $I_l=\{\beta=(\beta_1,\beta_2)|for\ some\ 0\leq \beta_1+\beta_2\leq 2d-4\}$ of cardinality $(d-1)^2$.
We define 
\begin{equation*}
\begin{split}
A_{\beta}&:=\frac{\beta_1+1}{d}+\frac{\beta_2+1}{d},\\
\eta&:=xdy-ydx,\\
\eta_{\beta}&:=g_{\beta}\eta,
\end{split}
\end{equation*}
where $\beta=(\beta_1,\beta_2)$. 
\begin{theo}\label{15:39:1}
Let $l(x,y)\in\mathbb{C}[x,y]$ be a polynomial transversal to infinity of degree $d$. The $\mathbb{C}[l]$-module $H_l$ is free and $\eta_{\beta}$, where $\beta\in I_l$, forms a basis of $H_l$. Furthermore, every $\omega\in \Omega_{\mathbb{C}^2}^1$ can be written 
\begin{eqnarray}
\omega=\sum_{\beta} {h_{\beta}(l) \eta_{\beta}} + dl\wedge \zeta_1 +d\zeta_2 ,
\end{eqnarray}
\begin{eqnarray*}
h_{\beta}\in\mathbb{C}[l]\ , \ \zeta_1 , \zeta_2\in\mathbb{C}[x,y]\ \ , \ \ deg(h_{\beta})\leq \frac{deg(\omega)}{d}-A_{\beta}.
\end{eqnarray*}
\end{theo}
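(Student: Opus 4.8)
The plan is to establish freeness of the $\mathbb{C}[l]$-module $H_l$ together with the explicit division with degree control, following the Picard–Lefschetz/Gavrilov-style analysis of Petrov modules for polynomials transversal to infinity. First I would reduce everything to the leading homogeneous part $l_d$. Since $l_d$ is a product of $d$ pairwise distinct linear forms, the Jacobian ideal $\langle (l_d)_x,(l_d)_y\rangle$ is a complete intersection with $\dim_{\mathbb{C}} V_{l_d}=(d-1)^2$, and the classical fact (as in \cite{13}, ch.~10) is that $\{x^iy^j:0\le i,j\le d-2\}$ is a monomial basis of $V_{l_d}$. The key algebraic input is the de Rham–type lemma: for a homogeneous $1$-form $\omega$ of degree $m$ one has $\omega = P\,\eta + dl_d\wedge\zeta_1 + d\zeta_2$ with $P$ homogeneous, where $\eta=xdy-ydx$ acts as the contraction dual to the Euler vector field; this is because $d\eta = 2\,dx\wedge dy$ and the Koszul complex of the regular sequence $((l_d)_x,(l_d)_y)$ is exact except at the top, so the cohomology of $(\Omega^{\bullet},dl_d\wedge -)$ in the relevant degree is computed by $V_{l_d}\cdot\eta$.

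Next I would set up the filtration of $\Omega^1_{\mathbb{C}^2}$ by degree and pass from $l_d$ to $l$. Write $l = l_d + (\text{lower order})$. Given $\omega$ of degree $D=\deg(\omega)$, apply the homogeneous statement to the top-degree part to peel off a term $\sum c_{ij}\eta_{ij}$ (coefficients constants, i.e. degree-zero polynomials in $l$) plus $dl_d\wedge\zeta_1 + d\zeta_2$; then replace $dl_d$ by $dl$ at the cost of lower-order $1$-forms, and also absorb the lower-order terms of $l$ appearing when we multiply $\eta_{ij}$ by powers of $l$ versus powers of $l_d$. This produces a strictly lower-degree remainder, so the process terminates by descending induction on degree. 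Bookkeeping the degrees shows that a power $l^k\eta_{ij}$ has degree $kd + i + j + 2$, which must be $\le D$; solving for $k$ gives exactly $k\le \frac{D}{d} - \frac{i+1}{d} - \frac{j+1}{d} = \frac{\deg(\omega)}{d} - A_{ij}$, which is the asserted bound on $\deg(h_{ij})$. Transversality to infinity is precisely what guarantees that each such peeling step can be performed at the top-degree level with constant (not merely polynomial-in-$l$) coefficients, which is what forces the degree estimate to be sharp.

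Finally, for freeness I would argue that the $\eta_{ij}$ are $\mathbb{C}[l]$-independent in $H_l$: a relation $\sum h_{ij}(l)\eta_{ij} = dl\wedge\zeta_1 + d\zeta_2$ with not all $h_{ij}=0$, taking the top homogeneous part, would give a nontrivial relation $\sum (h_{ij})_{\text{top}}\, l_d^{\deg h_{ij}}\,x^iy^j \cdot \eta = dl_d\wedge(\cdots)+d(\cdots)$, contradicting that the classes $x^iy^j\eta$ are a basis of the degree-graded pieces computed from $V_{l_d}$ — here one uses that $l_d$ is not a zero divisor and that multiplication by $l_d$ on $V_{l_d}$ interacts with the grading so that no collapse occurs. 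Combined with the surjectivity from the division algorithm above, this shows $\{\eta_{ij}\}$ is a free basis. I expect the main obstacle to be the careful degree accounting in the inductive peeling step — making sure that replacing $l_d$ by $l$ and $dl_d$ by $dl$ never increases the degree and that the quotients $h_{ij}$ genuinely land in the stated range — rather than any conceptual difficulty; the structural statement is standard once transversality to infinity is invoked, and the real content is the sharp bound $\deg(h_{ij})\le \deg(\omega)/d - A_{ij}$.
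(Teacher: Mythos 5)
The paper offers no proof of Theorem \ref{15:39:1}: it is quoted from \cite{13} (Theorem 10.9.1) and \cite{8}, and your sketch follows essentially the route of those sources --- reduction to the leading form $l_d$, the monomial basis $\{x^iy^j\}_{0\le i,j\le d-2}$ of the Milnor algebra $V_{l_d}$ of the complete intersection $\langle (l_d)_x,(l_d)_y\rangle$, a homogeneous de Rham/Koszul division using $d\eta_{ij}=(i+j+2)x^iy^j\,dx\wedge dy$, descending induction on degree to trade $l_d$ for $l$ and $dl_d$ for $dl$ at the cost of lower-order remainders, and the degree count $\deg(l^k\eta_{ij})=kd+i+j+2\le\deg\omega$, which gives exactly $\deg h_{ij}\le \deg(\omega)/d-A_{ij}$. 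At the level of strategy and of the surjectivity/degree-bound half of the statement there is nothing to object to.

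The one step that is wrong as written is your justification of the $\mathbb{C}[l]$-independence of the $\eta_{ij}$. You appeal to ``$l_d$ is not a zero divisor'' in $V_{l_d}$, but by the Euler identity $x(l_d)_x+y(l_d)_y=d\,l_d$ the leading form lies in its own Jacobian ideal, so $l_d=0$ in $V_{l_d}$ and multiplication by $l_d$ on the Milnor algebra is the zero map; no grading argument inside $V_{l_d}$ can rule out collapse. The independence has to be established in the Brieskorn module of $l_d$ itself, not in the Milnor algebra: either by the classical freeness of the Brieskorn lattice of a homogeneous isolated singularity (Brieskorn--Sebastiani), or, as in \cite{8} and \cite{13}, by pairing with vanishing cycles --- the period matrix $\bigl[\int_{\delta_k}\eta_{ij}\bigr]$ is invertible, the integrals $\int_{\delta_t}\eta_{ij}$ grow like $t^{A_{ij}}$ (times units) near infinity, and a nontrivial relation $\sum h_{ij}(t)\int_{\delta_t}\eta_{ij}\equiv 0$ is then impossible. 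With that repair your argument coincides with the standard proof the paper is citing.
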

{See e.g. \cite[ Theo. 10.9.1.]{15} and \cite{9}.}
\begin{prop}\label{exten}
$F^*: H_f\rightarrow H_{f\circ F}$ is an injective map of $\mathbb{C}[s]$-modules.
Pull-backs of $\eta_{\beta}$ for all $\beta\in I_f$ are independent in $H_{f\circ F}$ under the map $F^*$ and can be extended to a basis of $H_{f\circ F}$. 
\end{prop}
\begin{proof}
 By using Theorem \ref{15:39:1} we can write   
\begin{eqnarray*}
F^*(\eta_{\beta})=R^{\beta_1}S^{\beta_2}(RdS-SdR)=\sum_{0\leq i,j\leq an+n-1}P_{ij}(s')\bar{\eta}_{ij},
\end{eqnarray*}
where $\eta_{\beta}=x^{\beta_1}y^{\beta_2}(ydx-xdy)$.
We are going to show that the functions $P_{ij}(s')\in\mathbb{C}[s']$ are constant, where $s'=f\circ F$.
According to Theorem \ref{15:39:1} all of the coefficients $P_{ij}(s')$ are of degree $\frac{n(i+j)+2n}{n(a+1)}-A_{\beta}$, but $n(a+1)>n(i+j)+2n$ which implies $P_{ij}$ is constant for all $i,j$.
 It is clear that the map $F^*: H_f\rightarrow H_{f\circ F}$ is injective, because if $F^*(\sum h_\beta \eta_{\beta})=0$, then $\sum_{\beta} F^*(h_\beta) F^*(\eta_{\beta})=\sum_{\beta} [F^*(h_\beta)\sum_{ij} c_{ij}^\beta\bar{\eta}_{ij}]=\sum_{ij}[\sum_{\beta}F^*(h_\beta)c_{ij}^\beta]\bar{\eta}_{ij}=0$, $c_{ij}^\beta$ are constant, and since $\bar{\eta}_{ij}$ forms a basis of $H_{f\circ F}$ thus $\sum_{\beta}F^*(h_{\beta})c_{ij}^\beta=0$ for all $ij$. This implies that $h_{\beta}=0$ for all $\beta\in I_{f}$. Therefore $F^*(\eta_{\beta})$ are linear independent because $F^*$ is injective.
\\
In other words, according to above conditions $F^*(\eta_{\beta})$ for all $\beta$ can extend to a basis of $H_{f\circ F}$.  
\end{proof}
\subsection{Relatively Exact 1-form}
\begin{defi}
Let $\mathcal{F}$ be a foliation in $\mathbb{C}^{2}$. A meromorphic 1-form $\omega$
is called relatively exact modulo $\mathcal{F}$, if the restriction of $\omega$ to each leaf $L$ of $\mathcal{F}$ is exact, i.e. there is a meromorphic function $f$ on $L$ so that $\omega|_L =df$.
\end{defi} 

One can check that a meromorphic 1-form $\omega$ is relatively exact modulo $\mathcal{F}$, if 
$$
\int_{\delta}\omega =0,
$$ 
for all closed cycles in the leaves of $\mathcal{F}$, where this integral is well-defined.
\begin{prop}\label{12:43}
Let $f\in\mathbb{C}[x,y]$ be a polynomial with isolated critical points, and suppose that for every $t\in\mathbb{C}$ the fiber $f^{-1}(t)\subset\mathbb{C}^2$ is connected. Every relatively exact polynomial 1-form $\omega$ modulo $\mathcal{F}(df)$ on $\mathbb{C}^2$ is of the form 
\[\omega=dg+pdf\]
where $p, g$ are polynomials with degrees less than or equal to $deg(\omega)-deg(df)$ and $deg(\omega)$, respectively.
\end{prop}
See e.g. (\cite[Theo. 1.2.]{9}).

The above theorem is not true if we assume that some $f$-fibers are disconnected. P. Bonnet in  \cite{2} gives the example $f=x(xy+1)$ in $\mathbb{C}^2$ having the disconnected fiber $f=0$. The 1-form $y^2dx+xydy$ are relatively exact modulo $df$ but it is not an exact form modulo $df$. Note that the hypothesis of connectedness of $f$-fibers is valid for a generic polynomial, it means that any generic polynomial has connected fibers.
\subsection{Computing the Tangent Cone }
Let $F$ be a morphism from $\mathbb{C}^2$ into itself and $f$ be a polynomial of degree $a+1$ that are defined in (\ref{12:35}) and (\ref{12:51}) respectively. Consider
the deformation $\mathcal{F}_{\epsilon}=\mathcal{F}(\omega_{\epsilon})$ of $\mathcal{F}(d(f\circ F))$, where  
\[
\omega_{\epsilon}=d(f\circ F)+\epsilon^k \omega_k+\epsilon^{k+1}\omega_{k+1}+\dots\ \ \  , \ \ deg(\omega_j)\leq an+n-1.
\]
Then from the equality (\ref{15/09}) we have the following Theorem:
\begin{theo}\label{23:24} 
There exists a polynomial differential 1-form $\alpha_1$ with $deg(\alpha_1)\leq deg(\omega_k)$ and a polynomial $K\in \mathbb{C}[x,y]_{\leq {a+1}}$ such that $$\omega_k=F^*(\alpha_1)+dK,$$ where $F:\mathbb{C}^2\rightarrow\mathbb{C}^2$ is defined by $(x,y)\mapsto (R,S)$ as in (\ref{12:35}).
\end{theo}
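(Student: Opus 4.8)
The plan is to use the Melnikov function $M_k(t)=-\int_{\delta_t}\omega_k$ from \eqref{15/09} together with the structure of the vanishing cycle $\delta_t$ and the Petrov module machinery. First I would observe that, because $\mathcal{F}_\epsilon$ stays in $X(a,n)$, it has a center near the tangency point $p$ for all small $\epsilon$; hence the deformed holonomy around $\delta_t$ is the identity, and so $M_k(t)\equiv 0$ by \eqref{15/09}. This says precisely that $\omega_k$ is relatively exact modulo $\mathcal{F}_0=\mathcal{F}(d(f\circ F))$ along the family of leaves swept out by $\delta_t$. The key point is then to transfer this vanishing statement from the fibration of $f\circ F$ to the fibration of $f$ itself, using Theorem~\ref{1:29:1}: the kernel of $F_*\colon H_1((f\circ F)^{-1}(b),\mathbb{Z})\to H_1(f^{-1}(b),\mathbb{Z})$ is generated by the orbit, under the monodromy $\pi_1(\mathbb{C}\setminus C,b)$, of the tangency vanishing cycle $\delta_t$. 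Since $M_k$ vanishes on $\delta_t$ and $M_k$ is (up to sign) the period function of $\omega_k$, invariance of periods under monodromy forces $\int_\gamma \omega_k = 0$ for every $\gamma$ in $\ker(F_*)$.

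Next I would decompose $\omega_k$ in the Brieskorn lattice $H_{f\circ F}$. By Theorem~\ref{15:39:1} applied to $l = f\circ F$ (which is transversal to infinity for generic choices, by the Morse conditions on $g\circ R$, $h\circ S$), write
\[
\omega_k = \sum_{i,j} h_{ij}(f\circ F)\,\bar\eta_{ij} + d(f\circ F)\wedge\zeta_1 + d\zeta_2,
\]
with degree bounds $\deg(h_{ij})\le \deg(\omega_k)/d - A_{ij}$. By Proposition~\ref{exten}, the pull-backs $F^*(\eta_{\beta})$ span a $\mathbb{C}$-subspace of $H_{f\circ F}$ that extends to a basis; so I can split the sum into the part lying in $F^*(H_f)$ and a complementary part. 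The periods $\int_{\delta_t}\bar\eta_{ij}$ over the tangency cycle, and more importantly over the whole $\ker(F_*)$-orbit, detect exactly the complementary part: a class in $H_{f\circ F}$ whose periods vanish on all of $\ker(F_*)$ must, by the surjectivity half of Theorem~\ref{1:29:1} and the nondegeneracy of the intersection/period pairing on the vanishing homology (Picard–Lefschetz), come from $F^*(H_f)$ modulo the relatively exact terms $d(f\circ F)\wedge\zeta_1 + d\zeta_2$. Concretely, the coefficient functions $h_{ij}$ attached to basis elements not in the $F^*$-image are killed, and what survives is $F^*(\tilde\omega)$ for some $\tilde\omega = \sum \tilde h_{ij}(f)\eta_{ij}$ plus an exact/relatively-exact remainder.

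Then I would extract the form of the remainder. The term $d(f\circ F)\wedge\zeta_1$ can be absorbed — along leaves of $\mathcal{F}_0$ it contributes nothing to periods and, after reorganizing, it is accounted for by adjusting $\alpha_1$ and $K$; the genuinely new piece is $d\zeta_2$, and the degree bound in Theorem~\ref{15:39:1} together with $\deg(\omega_k)\le an+n-1 = d$ forces $\zeta_2$ to have degree at most $a+1$ after the $F^*$-part is removed (this is where $d = an+n-1$ and the divisibility of degrees enters, exactly as in Proposition~\ref{12:43} for the Hamiltonian case). Setting $K = \zeta_2$ and $\alpha_1 = \tilde\omega$, and checking $\deg(\alpha_1)\le \deg(\omega_k)$ from the degree estimates on the $\tilde h_{ij}$, gives $\omega_k = F^*(\alpha_1) + dK$ as claimed.

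The main obstacle I anticipate is the homological transfer step: controlling which classes in $H_{f\circ F}$ have vanishing periods over the entire $\ker(F_*)$-orbit and showing this subspace is exactly $F^*(H_f) + (\text{relatively exact})$. This requires Theorem~\ref{1:29:1} (proved only in \S4) in an essential way, plus a careful use of Picard–Lefschetz formulas to see that the monodromy orbit of the single tangency cycle $\delta_t$ really does span $\ker(F_*)$ and pairs nondegenerately with the complement of $F^*H_f$. The degree-bookkeeping to pin $K$ down to degree $a+1$ is routine but must be done with the genericity (Morse, transversal-to-infinity) hypotheses firmly in place so that Theorem~\ref{15:39:1} applies to $f\circ F$.
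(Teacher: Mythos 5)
Your proposal is correct in its overall strategy and shares the paper's skeleton (vanishing of $M_k$, hence of all periods of $\omega_k$ over $\ker(F_*)$ via Theorem \ref{1:29:1}; Brieskorn/Petrov decomposition; Proposition \ref{exten}; degree bounds; relative exactness via Proposition \ref{12:43}), but you execute the central step by a genuinely different route. The paper first descends: since the period map is well defined on $H_1(f^{-1}(b),\mathbb{Z})$, Poincar\'e duality and Atiyah--Hodge produce a section $\alpha_b$ of the cohomology bundle of $f$ over $\mathbb{C}\setminus C$; one then proves $\alpha$ is polynomial by writing $\alpha=\sum h_\beta\eta_\beta$, using invertibility of the period matrix plus moderate growth of periods at critical values to get meromorphy, and finally clearing denominators and comparing $F^*(P)\omega_k$ with $F^*(P\alpha)$ in $H_{f\circ F}$ via Proposition \ref{exten} to conclude $P$ divides the coefficients. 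You instead work entirely upstairs: decompose $\omega_k$ in $H_{f\circ F}$ in a basis extending $\{F^*(\eta_\beta)\}$ (the coefficients are constants by the degree bound of Theorem \ref{15:39:1} since $\deg(\omega_k)<\deg(f\circ F)$) and kill the complementary coefficients by pairing against $\ker(F_*)$. Your route avoids the Atiyah--Hodge/moderate-growth machinery altogether, which is a real simplification; its price is the nondegeneracy claim you flag, namely that $\bigl[\int_{\gamma}\tilde\eta_\sigma\bigr]$ with $\gamma$ running over a basis of $\ker(F_*)$ is invertible. This does follow from what is already in the paper --- the full period matrix is invertible and the block $\int_\gamma F^*(\eta_\beta)=\int_{F_*\gamma}\eta_\beta$ vanishes for $\gamma\in\ker(F_*)$, so the complementary square block must be invertible by the block-anti-triangular determinant formula (the ranks match: $\mu_{f\circ F}-\mu_f$ on both sides, by Theorem \ref{18:00}) --- but you should say this explicitly rather than appeal vaguely to Picard--Lefschetz. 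One further point to tighten: the term $\zeta_1\,d(f\circ F)$ cannot simply be ``absorbed''; as in the paper's last step, the degree bound $\deg(\omega_k-F^*(\alpha_1))\leq\deg(f\circ F)-1$ forces $\zeta_1$ to be constant, and a constant multiple of $d(f\circ F)$ equals $F^*(c\,df)$, which is how it gets folded into $\alpha_1$.
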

\begin{proof}
For a regular value $b$ of the function $f\circ F$, by using Theorem \ref{1:29:1}, $\int_{\delta}\omega_k=0$ for all $\delta\in ker(F_*)$. This implies that the linear map  
\begin{eqnarray*}
F_*:H_1((f\circ F)^{-1}(b),\mathbb{Z})\rightarrow H_1((f)^{-1}(b),\mathbb{Z}),
\end{eqnarray*}
is surjective.
Then 
\begin{eqnarray*}
F_b^*=H_{dR}^1(f^{-1}(b))\rightarrow H_{dR}^1((f\circ F)^{-1}(b))
\end{eqnarray*}
 is injective.Therefore the linear map 
\begin{eqnarray*}
H_1(f^{-1}(b),\mathbb{Z})\rightarrow \mathbb{C} & defined \ \ by & 
\delta\mapsto\int_{\gamma}\omega_k
\end{eqnarray*}
for an element $\gamma\in F_*^{-1}(\delta)$, is well defined. By duality of de Rham cohomology and singular homology there is a differential form $\alpha_b$ in $f^{-1}(b)$ such that 
\begin{eqnarray*}
\int_{\gamma}\omega_k=\int_{\delta}\alpha_b \ \ for \ all \ \gamma\in F^{-1}(\delta).
\end{eqnarray*} 
By using Atiyah-Hodge theorem ( see e.g. \cite{19}) the form $\alpha_b$ can be taken algebraically. All these $\alpha_b$'s give us a holomorphic global section $\alpha$ of cohomology bundle of $f$ outside the critical values of $f$ such that  
\begin{eqnarray*}
\alpha|f^{-1}(t)=\alpha_t,\ \ \alpha_t\in H_{dR}^1(f^{-1}(t)) &where& t\in\mathbb{C}\setminus \{c_1,\dots,c_{a^2}\}.
\end{eqnarray*} 
We are going to show that it is a holomorphic global section in the whole $\mathbb{C}$.
\\
By the Theorem \ref{15:39:1} we can write 
\begin{eqnarray*}
\alpha=\sum_{ \beta} h_{\beta}\eta_{\beta} &where& h_{\beta} \ s\ \ are \ \ holomorphic \ \ in \ \ \mathbb{C}\setminus\{c_1,\dots,c_{a^2}\} \ , \ \beta=(i,j).
\end{eqnarray*} 
The Period matrix 
\[
\begin{bmatrix}
  \int_{\delta_{c_k}}\eta_{\beta} 
\end{bmatrix}_{\mu\times\mu}
\]
where $\delta_c$ is a vanishing cycle and $c\in C$, is invertible, where $\mu$ is the rank of $H_1(f^{-1}(b),\mathbb{Z})$, (see e.g. \cite[Prop. 26.44]{11} ). Therefore, the $h_{\beta}$'s coefficients are meromorphic functions on $t$, because 
\[
\begin{bmatrix}
  h_{\beta_1} \\
   \vdots  \\
   h_{\beta_{\mu}}
\end{bmatrix}
=
\begin{bmatrix}
  \int_{\delta_{c_k}}\eta_{\beta} 
\end{bmatrix}_{\mu\times\mu}^{-1}
\begin{bmatrix}
  \int_{\delta_{c_1}}\alpha \\
   \vdots  \\
    \int_{\delta_{c_{\mu}}}\alpha
\end{bmatrix},
\]
and by using \cite[Ch 10,Theo. 10.7 ]{1} each integral $||\int_{\delta_{c_k}} \alpha||\leq\ const\ ||t-c_k||^{-N} $ for a natural number $N$ and  $t$ close to singular value $c_k$. Thus all the elements of the matrices on the right hand side of the equality have finite growth at critical values. 
This implies that, there is a polynomial $P(s)\in \mathbb{C}[s]$ such that $P.\alpha$ is a holomorphic form. We can write $P.\alpha =\sum_{\beta} {h'}_{\beta}\eta_{\beta}$, then $F^*(P)\omega_k-F^*(P.\alpha)=0$ in $H_{f\circ F}$. According to Proposition \ref{exten} the set of $F^*(\eta_{\beta})$ for all $\beta$ can be extended to a basis of $H_{f\circ F}$.
Therefore, we have 
\begin{eqnarray}\label{7:26}
F^*(P)\omega_k=\sum_{\beta}F^*(P). {b}_{\beta}(s)F^*(\eta_{\beta})+\sum_{\sigma}F^*(P)a_{\sigma}\tilde{\eta}_{\sigma}.
\end{eqnarray}
Since each element of $H_{f\circ F}$ can be written uniquely as a linear combination of the elements on this basis, then $a_{\sigma}=0$ for all $\sigma$. In other words, $F^*(P). {b}_{\beta}=F^*( {h'}_{\beta})$, hence $P$ divides ${h'}_{\beta}$. This implies that $\alpha$ is a holomorphic 1-form. By Theorem 
\ref{15:39}, the degree of $h_{\beta}$ in the equation (\ref{7:26}) is less than or equal to $ \frac{deg(\omega_k)}{as+s}-A_{\beta}<1$, hence  $h_{\beta}$ are constant for all $\beta$. To find the form of $\omega_k$ we use the Proposition \ref{12:43}
and we conclude that $\int_{F^*(\delta)}\alpha=\int_{\delta}\omega_k$ for all cycles $\delta$ in the fibers of $f\circ F$.
This implies that $\omega_k-F^*(\alpha)$ is relatively exact modulo $\mathcal{F}(d(f\circ F))$. Here by Proposition \ref{12:43} there are polynomials $K$ and $A$ such that $\omega_k-F^*(\alpha)=dK+Ad(f\circ F)$.  
The fact that $deg(\omega_k-F^*(\alpha))\leq deg(f\circ F)-1$ implies $A\equiv 0$, so we get our desired equality. 
\end{proof} 
The proof of the main theorem still is not finished. We have to prove that the polynomial $K$ in the Theorem \ref{23:24} is of the form of the equation \eqref{23:25}. 
In order to reach this goal, we need to compute higher order Melnikov functions. 
This will be done in the next section. 
\section{Higher order Melnikov functions }
L. Gavrilov in \cite{8} has shown that the higher order Melnikov functions can be expressed in terms of iterated integrals.
Basic properties of iterated integrals are established by A. N. Parsin in 1969, and a systematic approach to de Rham cohomology type theorems for iterated integrals was made by K. T. Chen around 1977. For further details on iterated integrals see \cite[Ch 6]{17} or \cite{18}. In order to prove  Theorem \ref{Main theorem} we have to compute the higher order Melnikov function, and show that the polynomial $K$ in Theorem \ref{23:24} has the form of the equation \eqref{23:25}. 

Let $\gamma:[0,1]\rightarrow \mathbb{C}^2$ be a piecewise smooth path on $\mathbb{C}^2$. Let $\omega _1 ,\omega_2,\dots ,\omega_n$ be smooth
1-forms on $\mathbb{C}^2$, $ \gamma^{*}(\omega_i)=f_i(t)dt$
for the pull-back of the forms $\omega_i$ to the interval [0,1].
Recall that the ordinary line integral given by
$$
\int_{\gamma}\omega_1 =\int_{[0,1]}\gamma^{*}(\omega_1) =\int_0^1 f_1(t_1)dt
$$
does not depend on the choice of parametrization of $\gamma$.
\begin{defi}
Iterated integral of $\omega_1,\omega_2,\dots , \omega_n $ along the path $\gamma$ is defined by
$$
\int_{\gamma}\omega_1.\omega_2\dots\omega_n =\int_{0\leq t_1 \leq \dots \leq t_n \leq 1}(f_1(t)dt_1 \dots f_n(t_n)dt_n).
$$
We have for instance 
\[
\int_{\gamma}\omega_1\cdot\omega_2=\int_{\gamma}\omega_1\int_{\gamma(0)}^{\gamma(t)}\omega_2.
\]
\end{defi}
Let $\omega_2=dK$ be an exact form so we have 
\[
\int_{l}\omega_1\cdot dK=\int_{l}\omega_1\int_{l(0)}^{l(t)}dK=\int_{l}\omega_1\left(K(l(t))-K(l(0))\right)=\int_{l}K\omega_1-K(l(0))\int_{l}\omega_1.
\]
Let us consider the deformation 
$$
\mathcal{F}_{\epsilon}:d(f\circ F)+\omega_k\epsilon^k+\omega_{k+1}\epsilon^{k+1}+\dots \ \ \ deg(\omega_i)\leq n(a+1)-1
$$	
of $d(f\circ F)$. The deformed holonomy along the path $\delta_t$ in $\Sigma$ is
\[
h_{\epsilon}(t)-t=M_1(t)\epsilon+\dots+M_k(t)\epsilon^k+\dots+M_{2k}(t)\epsilon^{2k}+\dots.
\]
Since $\omega_i=0$ where $0<i<k-1$, then $M_1=M_2=\dots=M_{k-1}=0$. 
By using Theorem 3.2 in \cite{17} (Higher order approximation), we conclude that 
$M_i(t)=-\int_{\delta_t}\omega_i$ where $k\leq i<2k$, and also 
\[
M_{2k}(t):=-\int_{\delta_t}(\omega_k .(\frac{d\omega_k}{d(f\circ F)})+\omega_{2k}).
\]
Note that the vector $W$ in equation \eqref{13:59:03} when $\omega=d(f\circ F)$  is of the form 
\begin{eqnarray}\label{23:25}
W=d(R_1\frac{\partial f}{\partial x}(R,S)+S_1\frac{\partial f}{\partial y}(R,S))+F^{*}(\alpha_1).
\end{eqnarray}
\begin{lem}\label{13:17}
The polynomial $K$ in Theorem \ref{23:24} is of the form 
$$
K=R_1\frac{\partial f}{\partial x}(R,S)+S_1\frac{\partial f}{\partial y}(R,S), 
$$
where $ R_1 ,S_1 \in \mathbb{C}[x,y]_{\leq n}$. 
\end{lem}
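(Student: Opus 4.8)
The plan is to show that the polynomial $K$ appearing in Theorem \ref{23:24}, which a priori is only known to satisfy $\deg(K)\leq a+1$, must in fact have the very rigid shape $K=R_1\,\partial_x f(R,S)+S_1\,\partial_y f(R,S)$ with $R_1,S_1\in\mathbb{C}[x,y]_{\leq n}$. The key input is the second-order Melnikov function. Recall from Theorem \ref{23:24} that $\omega_k=F^*(\alpha_1)+dK$, and recall the expression
\[
M_{2k}(t)=-\int_{\delta_t}\Bigl(\omega_k\cdot\tfrac{d\omega_k}{d(f\circ F)}+\omega_{2k}\Bigr).
\]
First I would observe that because $\mathcal{F}_\epsilon$ lies in the component $X(a,n)\supseteq\mathcal{P}(a,n)$, it has a center near the tangency point $p$ for all small $\epsilon$, so all Melnikov functions $M_i$, in particular $M_{2k}$, vanish identically in $t$ along the family $\delta_t$ of vanishing cycles around $p$. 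Since $M_k=-\int_{\delta_t}\omega_k$ also vanishes (the center condition at order $k$, already used to produce $\alpha_1,K$), we get a constraint purely on the iterated integral $\int_{\delta_t}\omega_k\cdot\frac{d\omega_k}{d(f\circ F)}$ modulo the single ordinary integral of $\omega_{2k}$.

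The next step is to substitute $\omega_k=F^*(\alpha_1)+dK$ into the iterated integral and exploit the shuffle/integration-by-parts relations for iterated integrals together with the fact that $\delta_t$ is a \emph{tangency} vanishing cycle, i.e. it is the pull-back under $F$ of a cycle that bounds a disc meeting the leaf with multiplicity two. Writing $s'=f\circ F$ and using $d\omega_k = d(F^*\alpha_1)=F^*(d\alpha_1)$ (the $dK$ term contributes nothing to $d\omega_k$), the integrand $\omega_k\cdot\frac{d\omega_k}{ds'}$ splits into a part involving only $F^*\alpha_1$ and a part involving $dK$. The first part is pulled back from the base and, by Theorem \ref{1:29:1}, integrates to zero over $\delta_t$ exactly when the corresponding iterated integral over $F_*\delta_t=0$ in $H_1(f^{-1}(b))$ vanishes — which it does, since $F_*\delta_t$ is trivial for a tangency cycle. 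Hence the vanishing of $M_{2k}$ forces the $dK$-part, an iterated integral of the form $\int_{\delta_t} dK\cdot(\cdots) + \int_{\delta_t}(\cdots)\cdot dK$, to be accounted for by $\int_{\delta_t}\omega_{2k}$, an \emph{ordinary} integral. Integration by parts turns $\int_{\delta_t} dK\cdot\beta$ into $-\int_{\delta_t} K\,\beta + (\text{boundary})$, i.e. into an ordinary integral with weight $K$; comparing the admissible degrees (using the degree bound $\deg(\omega_{2k})\leq an+n-1$ and the bound on $K$) one sees that $K$ must be \emph{relatively exact-compatible} with the fibration $f\circ F$ in a strong sense: $dK$ must itself be, modulo $d(f\circ F)\wedge\Omega^0$, a pull-back $F^*(\text{1-form of degree }a)$. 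Concretely $dK - F^*(\beta_1)\in d(f\circ F)\wedge\Omega^0 + \ldots$ for some degree-$a$ form $\beta_1$; chasing this through the Petrov module $H_{f\circ F}$ and Proposition \ref{exten} (independence of the $F^*\eta_\beta$), and using $d(f\circ F)=F^*(df)$, yields that $K$ is a function of the form $G(R,S)$ plus something killed by $F^*$, and a degree count pins down $G$ so that $dK = d\bigl(R_1\,\partial_x f(R,S)+S_1\,\partial_y f(R,S)\bigr)$ after absorbing the $R_1,S_1$ coming from deforming $R,S$; comparing with the explicit form \eqref{23:25} of the tangent vector $W$ at a Hamiltonian pull-back point finishes the identification, with $R_1,S_1\in\mathbb{C}[x,y]_{\leq n}$ forced by the degree constraint $d=an+n-1$.

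The main obstacle I anticipate is the middle step: extracting, from the single scalar identity $M_{2k}\equiv 0$, enough information to constrain the two-variable polynomial $K$. The iterated integral $\int_{\delta_t}\omega_k\cdot\frac{d\omega_k}{ds'}$ is genuinely quadratic in the deformation and its vanishing along the one-parameter family $\delta_t$ is one functional equation; one must argue that, after subtracting the contribution of $\omega_{2k}$ (which only gives an ordinary period and hence lands in a controlled finite-dimensional space of functions of $t$), the remaining relation determines $K$ up to the expected ambiguity. The right framework is to work in the Petrov/Brieskorn module and use the Picard--Fuchs / Gauss--Manin connection to convert the iterated-integral identity into a $\mathbb{C}[s']$-linear relation among classes in $H_{f\circ F}$; the non-degeneracy of the period matrix of the $\eta_{ij}$ (invoked already in the proof of Theorem \ref{23:24}) then makes the argument finite-dimensional and the degree bound $A_{ij}$ does the rest. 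I would also need to be careful that the "boundary terms" in the integration by parts vanish, which uses that $\delta_t$ is a closed cycle, and that the section of the cohomology bundle one obtains extends holomorphically over the critical values of $f$, exactly as in Theorem \ref{23:24}.
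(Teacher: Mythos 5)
Your first half follows the paper's line exactly: substitute $\omega_k=F^*(\alpha_1)+dK$ into the iterated integral, use $F_*(\delta_t)=0$ for a tangency cycle to kill the purely pulled-back piece and the boundary term, reduce $\int_{\delta_t}\omega_k\cdot\frac{d\omega_k}{d(f\circ F)}$ to the ordinary integral $\int_{\delta_t}K\,F^*\bigl(\frac{d\alpha_1}{df}\bigr)$, and then re-run the argument of Theorem \ref{23:24} on $\omega_{2k}+K\,F^*\bigl(\frac{d\alpha_1}{df}\bigr)$ to write it as $F^*(\alpha_2)+dK_2+A\,d(f\circ F)$. Up to there you are on track.

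The gap is in your endgame. From that identity the paper wedges with $d(f\circ F)$, using $F^*\bigl(\frac{d\alpha_1}{df}\bigr)\wedge d(f\circ F)=F^*(d\alpha_1)=(h\circ F)(R_xS_y-R_yS_x)\,dx\wedge dy$, to conclude that $K\cdot(h\circ F)\cdot(R_xS_y-R_yS_x)$ lies in the ideal $I=\langle R_xf_x(R,S)+R_yf_y(R,S),\,S_xf_x(R,S)+S_yf_y(R,S)\rangle$ (the coefficients of $d(f\circ F)$). The lemma then follows from a purely commutative-algebra step: setting $I_1=\langle f_x(R,S),f_y(R,S)\rangle$ and $J=\langle (h\circ F)(R_xS_y-R_yS_x)\rangle$, genericity gives $V(I_1)\cap V(J)=\emptyset$, hence $I_1+J=\mathbb{C}[x,y]$, hence $I_1\cap J=I_1J$ and one may cancel the factor $(h\circ F)(R_xS_y-R_yS_x)$ to get $K\in I_1$, i.e.\ $K=R_1f_x(R,S)+S_1f_y(R,S)$, with $R_1,S_1\in\mathbb{C}[x,y]_{\leq n}$ forced by degrees. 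Your proposal replaces this with the claim that ``$dK$ must itself be a pull-back modulo $d(f\circ F)\wedge\Omega^0$'' and that ``$K$ is of the form $G(R,S)$ plus something killed by $F^*$, and a degree count pins down $G$.'' That is not the right target and would not yield the statement: $R_1f_x(R,S)+S_1f_y(R,S)$ is in general \emph{not} a function of $(R,S)$ (the coefficients $R_1,S_1$ are arbitrary polynomials of degree $\leq n$, coming from deforming $R$ and $S$), and no degree count alone can establish membership in the ideal $\langle f_x(R,S),f_y(R,S)\rangle$. Likewise, comparing with the tangent vector $W$ in \eqref{23:25} only tells you what answer to expect; it is not a proof that $K$ has that shape. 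You need the wedge-with-$d(f\circ F)$ step and the coprimality/localization argument to divide out the Jacobian factor.
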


\begin{proof}
\begin{eqnarray*}
\int_{\delta_t}\omega_k .(\frac{d\omega_k}{d(f\circ F)})&=& \int_{\delta_t} (F^{*}\alpha_1 +dK ).F^{*}(\frac{d\alpha_1}{df})\\
 &=& \int_{\delta_t}F^{*}(\alpha_1 .\frac{d\alpha_1}{df}) + \int_{\delta_t}(dK).F^{*}(\frac{d\alpha_1}{df}) \\
 &=& \int_{F_{*}(\delta_t)}\alpha_1.\frac{d\alpha_1}{df} + \int_{\delta_t}K F^{*}(\frac{d\alpha_1}{df}) - K(p_t)\int_{\delta_t}F^{*}(\frac{d\alpha_1}{df}) \\
 &=& \int_{\delta_t}K F^{*}(\frac{d\alpha_1}{df}).
\end{eqnarray*}
Here $ p_t$ is a point belonging to the cycle $\delta_t$. The last equality $\int_{F_{*}(\delta_t)}\alpha_1.(\frac{d\alpha_1}{df})$ follows form the fact that $F^*(\delta)$ is homotopic to a constant path. The equality $M_{2k}(t)=0$ and a similar argument as in Theorem \ref{23:24} implies that
$$
\omega_{2k}+KF^{*}(\frac{d\alpha_1}{df})=F^{*}(\alpha_2) + dK_2 + A_jd(f\circ F),
$$
and therefore,
$$
KF^{*}(d\alpha_1) = -\omega_{2k}\wedge d(f\circ F)+ F^{*}(\alpha_2)\wedge d(f\circ F)+ dK_2\wedge d(f\circ F).
$$\\
Since $d\alpha_1$ is a 2-form like $h(x,y)dx\wedge dy$, 
$$F^{*}(h(x,y)dx\wedge dy)=F^{*}(h).F^{*}(dx\wedge dy)=(h\circ F)(R_x S_y-R_y S_x)dx\wedge dy. $$
This implies that $Kh(F)(R_x S_y-R_y S_x)$ belongs to the ideal
$$I=<R_x f_x (R,S)+R_y f_y(R,S) \ , \ S_x f_x(R,S)+S_y f_y(R,S)>.$$
Consider the ideal $I_1=<f_x(R,S) \ , \ f_y(R,S)>$ and $J=<h(F).(R_x S_y-R_y S_x)>$. Then it is clear that
$I\subset I_1$ and $I_1.J\subset I$, hence $$I_1.J\subseteq I\cap J \subseteq I_1\cap J.$$
Since  $\int_{D_\delta}d\omega_k=\int_{D_\delta} F^*(d\alpha_1)$, where $D_\delta$ is a real disk with boundary $\delta$ which is vanishing cycle around the critical point $p$. If $h$ vanishes at a point in $V(f_x)\cap V(f_y)$, then $\int_{D_\delta} F^*(d\alpha_1)=0$. Let $\delta$ be a pull back vanishing cycle, since the action of monodromy group $\pi_1(\mathbb{C}\setminus C,b)$ on $\delta$ generates all $H_1((f\circ F)^{-1}(b),\mathbb{Z})$, thus  $\omega_k$ will be relatively exact form which in general is not. We can assume that the curve $V(h)$ does not pass any critical points of $f$. By our hypothesis $F$ and $f$ are generic, thus we have $V(J)\cap V(I_1)=\emptyset $. This means that $J+I_1=\mathbb{C}[x,y]$ thus we have
\[
I_1.J=I_1\cap J \ \ \ \ \Rightarrow \ \ \ I\cap J = I_1 .J
\]
which states that $K\in<f_x(R,S) \ , \ f_y(R,S)>$. Therefore we get the result
$$K=R_1\frac{\partial f}{\partial x}(R,S)+S_1 \frac{\partial f}{\partial y}(R,S) \ \ \ \ where \ \ \ R_1 \,\ S_1 \in\mathbb{C}[x,y]_{\leq n} .$$
\end{proof}
\begin{coro}\label{12:18}
The tangent cone of $\mathcal{M}(as+s-1)$ at the point $\mathcal{F}_0:=\mathcal{F}(d(f\circ F))$ is as the following:
$$TC_{\mathcal{F}_0}\mathcal{M}(as+s-1)=T_{\mathcal{F}_0}\mathcal{P}(a,s)\cup T_{\mathcal{F}_0}\mathcal{H}(as+s-1).$$
\end{coro}
\subsection{Proof of main theorem }\label{18:51:03}
Consider a germ of an analytic variety ($X$,0) in ($\mathbb{C}^n$,0).  
The holomorphic curve $\gamma :(\mathbb{C},0)\rightarrow (X,0)$  has the Taylor expansion $\gamma=\omega\epsilon^l+\omega'\epsilon^{l+1}+\dots$ , $\omega, \omega',\dots \in \mathbb{C}^n$. Let $T_l$ be the set of all $\omega$. The tangent cone $TC_0X$ of $X$ at 0 is $TC_0X=\cup_{l=1}^{\infty} T_l$. \\

The variety $\mathcal{P}(n,a)$ is parametrized by 

\begin{equation}
\tau :\mathcal{P}_n\times\mathcal{P}_n\times\mathcal{P}_a\times\mathcal{P}_a\rightarrow\mathcal{F}(d),\ \ \ \  \ d=na+n-1
\end{equation}
$$	
\tau(R,S,P,Q)=P(R,S)dS-Q(R,S)dR , n\geq2,
$$ 
and so it is irreducible.
\begin{proof}\emph{of Theorem \ref{1:28}:}
 Let $\mathcal{F}_0:=\mathcal{F}(d(f\circ F))$ where $f$ and $F$ defined in \eqref{12:35} and \eqref{12:51} respectively.
 For the proof of our main theorem, it is enough to show that $X:=(\mathcal{P}(n,a),\mathcal{F}_0)$ is an irreducible component of $(\mathcal{M}(d),\mathcal{F}_0)$.
According to Corollary \ref{12:18} we have:
\begin{equation}\label{13:44}
 TC_{\mathcal{F}_0}\mathcal{M}(d)= TC_{\mathcal{F}_0}\mathcal{P}(n,a)\cup TC_{\mathcal{F}_0} H(d).
\end{equation}
Let $X'$ be an irreducible component of $(\mathcal{M}(d),\mathcal{F}_0)$ such that $X\subset X'$. If $TC_{\mathcal{F}_0}X\subset Y$, where $Y$ is the irreducible component of $TC_{\mathcal{F}_0}X'$, then it is a subset of $TC_{\mathcal{F}_0}X$, because the equality (\ref{13:44}) is a decomposition of $TC_{\mathcal{F}_0}\mathcal{M}(d)$ 
to irreducible component. This implies that $Y=TC_{\mathcal{F}_0}X$. The dimension of $Y\subset TC_{\mathcal{F}_0}X'$ is equal to the dimension $X$, so $dim(X')=dim(X)$. Therefore $X=X'$ because $X\subset X'$ and $X$ and $ X'$ are irreducible algebraic sets and they have the same dimension.
\end{proof}
\section{Limit cycles}
Consider a real planar 1-form $\omega=P(x,y)dy-Q(x,y)dx$ where 
$P$ and $Q$ are polynomials of degree less than or equal to $d$. Let $\mathcal{F}$ be the foliation induced by the 1-form $\omega$.
\begin{defi}
A closed trajectory which is a limit set of some trajectories of a real foliation $\mathcal{F}$ is called a limit cycle.
\end{defi}
The \emph{Hilbert number} $H_d$ is the maximum possible number of limit cycles of a real foliation $\mathcal{F}(\omega)$. It is still unsolved whether $H_d$ is finite, even for the simple case $d=2$. It is known that $H_d\geq k.d^2$ for some constant $k$, but in 1995, C.J Christopher and N.G. Lloyd found a strong lower bound $d^2log\ d$  for the Hilbert numbers, see \cite{4}.  
   
Let $X$ be an irreducible component of $\mathcal{M}(d)$. 
Let $p$ be a real center singularity of a real foliation $\mathcal{F}\in X-sing(\mathcal{M}(d))$. By real foliation we mean the  equation of the foliation has real coefficient. 
Let $\delta_t,t\in (\mathbb{R},0)$ be a family of real vanishing cycles around $p$.
Roughly speaking, the \emph{cyclicity} of $\delta_0$ is the maximum number of
limit cycles appearing near $\delta_0$ after a deformation of $\mathcal{F}$ in
$\mathcal{F}(d)$.
The cyclicity of $\delta_0$ in a deformation of $\mathcal{F}$ inside $\mathcal{F}(d)$ 
is greater
than $codim_{\mathcal{F}(d)}(X)-1$. 
One can find the exact definition of 
cyclicity and the proof of this fact in \cite{20}. Yu. Ilyashenko in \cite{10} shows that  
$$codim_{\mathcal{F}(d)}(\mathcal{H}(d))-1=\frac{(d+2)(d-1)}{2}-1.$$ The best upper bound for the cyclicity of a vanishing cycle of a Hamiltonian equation is the P. Mardesic's result $\frac{d^4+d^2-2}{2}$ in \cite{13}. H. Movasati in \cite{16}, shows that the cyclicity of $\delta_0$ of a logarithmic foliation $\mathcal{F}(f\sum_{i=1}^s\lambda_i\frac{df_i}{f_i})\in\mathcal{L}(d_1,\dots,d_s)$ is not less than
\[
(d+1)(d+2)-\sum_{i=1}^{s}(\frac{(d_i+1)(d_i+2)}{2})-1.
\]
This lower bound reaches to maximum when $d_i=1 , s=d+1 , i=1,\dots,s$. In this case the cyclicity of $\delta$ is not less than $d^2-1$.  
\begin{prop}
Let $n>1$ and $d:=an+n-1$, the cyclicity of $\delta_0$ in a deformation of $\mathcal{F}$ in $\mathcal{F}(d)$ 
is not less than 
\[
C:=(d+1)(d+2)-((n+1)(n+2)+(\frac{d+1}{n})(\frac{d+1}{n}+1))-1.
\]
\end{prop}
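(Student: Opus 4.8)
The strategy is to feed Theorem~\ref{1:28} into the cyclicity estimate recalled in \S 3 and then to carry out the codimension bookkeeping for the component $\mathcal{P}(a,n)$ of $\mathcal{M}(d)$.

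By Theorem~\ref{1:28}, $X:=\mathcal{P}(a,n)$ is an irreducible component of $\mathcal{M}(d)$ with $d=an+n-1$. First I would check that a generic member of $X$ is of the kind to which the limit-cycle estimate applies: choosing $F=(R,S)$ and $\omega=P\,dy-Q\,dx$ with real coefficients, the remark preceding Theorem~\ref{1:28} produces a leaf of $\mathcal{F}(\omega)$ tangent to $F(D)$ at a point $q$, so $F^{*}\mathcal{F}(\omega)$ has a center near a point of $F^{-1}(q)$ which, for real data, can be taken real. Hence a generic $\mathcal{F}\in\mathcal{P}(a,n)\setminus sing(\mathcal{M}(d))$ is a real foliation carrying a real center $p$ together with a family $\delta_t$, $t\in(\mathbb{R},0)$, of real vanishing cycles around $p$; density of the smooth locus of the component $X$ makes this choice legitimate. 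The estimate from \cite{15} recalled in \S 3 then gives
\[
\mathrm{cyclicity}(\delta_0)\ \geq\ \mathrm{codim}_{\mathcal{F}(d)}\big(\mathcal{P}(a,n)\big)-1 ,
\]
so everything reduces to bounding this codimension.

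Next comes the dimension count. Since $\dim_{\mathbb{C}}\mathbb{C}[x,y]_{\le k}=\tfrac{(k+1)(k+2)}{2}$, the vector space $\Omega^1_d$ has dimension $(d+1)(d+2)$ and $\dim\mathcal{F}(d)=(d+1)(d+2)-1$. For $X=\mathcal{P}(a,n)=\overline{\mathrm{Im}\,\tau}$, where $\tau(R,S,P,Q)=\big[\,P(R,S)\,dS-Q(R,S)\,dR\,\big]$ with $(R,S)\in\mathcal{P}_n\times\mathcal{P}_n$ and $(P,Q)\in\mathcal{P}_a\times\mathcal{P}_a$, the source has dimension $(n+1)(n+2)+(a+1)(a+2)$, and since the scaling $(P,Q)\mapsto(cP,cQ)$, $c\in\mathbb{C}^\ast$, lies in every fibre of $\tau$, we have $\dim X\le(n+1)(n+2)+(a+1)(a+2)-1$. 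From $d+1=n(a+1)$ one has $a+1=\tfrac{d+1}{n}$ and $a+2=\tfrac{d+1}{n}+1$, so
\[
\dim\mathcal{P}(a,n)\ \le\ (n+1)(n+2)+\tfrac{d+1}{n}\Big(\tfrac{d+1}{n}+1\Big)-1 ,
\]
and substituting into the previous inequality yields exactly $\mathrm{cyclicity}(\delta_0)\geq C$. The hypothesis $n>1$ is what makes $\mathcal{P}(a,n)$ a proper subvariety of $\mathcal{F}(d)$ (for $n=1$ the morphism $F$ is affine and $\mathcal{P}(a,1)=\mathcal{F}(a)$, making the bound vacuous) and it keeps $C\geq 0$.

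The step most in need of care is the first one: one must know that a generic point of $\mathcal{P}(a,n)$ is a smooth point of $\mathcal{M}(d)$ \emph{and} is representable by a real equation with a real center, which is exactly where the irreducibility of $\mathcal{P}(a,n)$ from Theorem~\ref{1:28} and the explicit real constructions of \S\ref{15:46} are used. The codimension count itself is not an obstacle, since only the upper bound on $\dim\mathcal{P}(a,n)$ is needed and a smaller actual dimension would only strengthen the conclusion; if one wanted $C$ to be the \emph{sharp} value produced by this method, the remaining work would be to pin down the exact generic fibre dimension of $\tau$, that is, to measure how far post-composing $F$ with an affine automorphism (which can be absorbed into the pair $(P,Q)$) enlarges the fibre beyond the scalar direction.
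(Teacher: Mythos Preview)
Your proposal is correct and follows exactly the route the paper intends: the paper does not write out a proof for this proposition but relies on the sentence immediately preceding it (the cyclicity is at least $\mathrm{codim}_{\mathcal{F}(d)}(X)-1$, from \cite{15}) together with Theorem~\ref{1:28}, and your argument simply makes the implicit dimension count explicit. Your observation that only an \emph{upper} bound on $\dim\mathcal{P}(a,n)$ is needed (so that a larger generic fibre of $\tau$ would only improve the bound) is the right way to handle the bookkeeping, and your arithmetic $(a+1)(a+2)=\tfrac{d+1}{n}\bigl(\tfrac{d+1}{n}+1\bigr)$ matches the claimed $C$.
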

By considering  $d+1=(a+1)n$  as a fixed value, when $n\leq a+1$ and in addition the distance of $a+1$ and $n$  is minimum, then $(n+1)(n+2)+(\frac{d+1}{n})(\frac{d+1}{n}+1)$ will be minimum. This minimization will lead to maximizing of the cyclicity. If $n$ and $(a+1)$ are near to $\sqrt{d+1}$ then the cyclicity $C$ is close to $d^2+d-4\sqrt{d+1}-3$. If $a+1=p$ and $n=q$ , where $p,q$ are primes and $p>q$ then $C=(pq)^2+pq-q^2-3q-p^2-p-3$, for instance when $q=2$ we have $C=3p^2+p-13$.   
  \section{Picard-Lefschetz Theory}
In this section, we intend to study the topology of regular fibers of polynomial functions composition that are transversal to infinity with two variables of the form $u(x)+v(y)$. See \cite[Ch 7]{15} for the general cases. For this, first we study the topology of polynomials in one variable.  The main idea of this section is to understand the intersection number between two vanishing cycles and the action of monodromy group on a vanishing cycle in the case of pull-back of cycles under a morphism.

Suppose that a function $f$ has only non-degenerate critical points with the finite set of critical values $C$ labeled by $c_1,c_2,\dots,c_s$. 
\begin{defi} 
A distinguished system of paths related to $f$ is the system of smooth paths in $\mathbb{C}$, starting at the regular point $b\in \mathbb{C}\setminus C$     
and ending at a point in $C$ such that 
\begin{enumerate}
\item The paths have no self-intersections;
\item Different paths meet only at their
common point $b$.
\end{enumerate}
\end{defi}
Consider a small ball $U_p$ in $\mathbb{C}^m$ with the center at the Morse critical point $p$. Let the value $b$ be very close to $c:=f(p)$ but not equal to it. Let $\alpha:=[0,1]\rightarrow f(U_p)$ be a path that starts at $b$, ends at $c$ and does not pass through any other critical value of $f$. By the Morse lemma, there is a local coordinate system $x_1,\dots,x_m$ in a neighborhood of $p$ such that the function $f$ can be written in the form $f(x_1,\dots,x_m)=c+\sum_jx_j^2$. Consider the sphere $S_t:=\{x\in f^{-1}(\alpha(t))|Im(x_j(x))=0\}\cap U_p$. Whenever $t$ tends to 1, then $S_t$ tends to $p$.
\begin{defi} 
The homology class $\delta\in H_{m-1}(f^{-1}(b),\mathbb{Z})$ defined by the sphere $S_0$, in the nonsingular fiber $f^{-1}(b)$, is called a vanishing cycle along the path $\alpha$ or just a vanishing cycle.  
\end{defi}
\begin{theo}(see e.g \cite{1})
The collection of the vanishing cycles along all paths of a distinguished system of paths, forms a basis of the group $H_{m-1}(f^{-1}(b),\mathbb{Z})$.   
\end{theo}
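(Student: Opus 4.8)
The plan is to realize the fibre $f^{-1}(b)$ together with its vanishing cycles inside the total space of $f$ over a large disk, and to read off the statement from the long exact sequence of a pair. First I would fix a closed disk $\Delta\subset\C$ whose interior contains all the critical values $c_1,\dots,c_s$, with $b\in\partial\Delta$, and isotope the given distinguished system so that every path $\alpha_i$ lies in $\Delta$; their union $D=\bigcup_i\alpha_i$ is then a contractible, tree-like subset of $\Delta$. Because $f$ restricts to a locally trivial fibration over $\C\setminus C$, the isotopy-extension theorem shows that $f^{-1}(\Delta)$ deformation retracts onto $f^{-1}(D)$, and $f^{-1}(D)$ is obtained from $f^{-1}(b)$ by gluing in one block $f^{-1}(\alpha_i)$ along each arc.

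The second step is the local analysis at a single critical point $p_i$ over $c_i$. Using the Morse Lemma in coordinates with $f=c_i+\sum_j x_j^2$, the spheres $S_t\subset f^{-1}(\alpha_i(t))$ sweep out an $m$-dimensional Lefschetz thimble $\Delta_i$ — a cone on the vanishing sphere — whose boundary is exactly the vanishing cycle $\delta_i\in H_{m-1}(f^{-1}(b),\Z)$, and $f^{-1}(\alpha_i)$ deformation retracts onto $f^{-1}(b)\cup\Delta_i$. Since the arcs meet only at $b$, assembling over $i$ shows that $f^{-1}(\Delta)$ is homotopy equivalent to $f^{-1}(b)$ with $s$ cells of dimension $m$ attached, the $i$-th one along $\delta_i$. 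Consequently $H_k(f^{-1}(\Delta),f^{-1}(b))=0$ for $k\neq m$, while $H_m(f^{-1}(\Delta),f^{-1}(b))$ is free of rank $s$ on the thimble classes $[\Delta_i]$, with connecting homomorphism $\partial[\Delta_i]=[\delta_i]$.

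The last step uses contractibility of the total space over $\Delta$. In the purely local Milnor-fibre situation this is immediate, as $f^{-1}(\Delta)$ is then a small ball; in the global case relevant here, $f$ transversal to infinity is tame, has no critical values at infinity, $f^{-1}(\C\setminus\Delta)$ is a product near $f^{-1}(\partial\Delta)$, and $f^{-1}(\Delta)$ is a deformation retract of $\C^m$, hence contractible. Then, for $m\geq2$, the long exact sequence of the pair reads
\[
0=H_m(f^{-1}(\Delta))\longrightarrow H_m(f^{-1}(\Delta),f^{-1}(b))\xrightarrow{\partial}H_{m-1}(f^{-1}(b),\Z)\longrightarrow H_{m-1}(f^{-1}(\Delta))=0,
\]
so $\partial$ is an isomorphism carrying the basis $\{[\Delta_i]\}$ onto $\{[\delta_i]\}$, and the vanishing cycles form a $\Z$-basis of $H_{m-1}(f^{-1}(b),\Z)$. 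For $m=1$ the same computation applies with $H_0$ replaced by reduced homology $\widetilde H_0$, the vanishing ``cycles'' being differences of the $d$ points of the fibre, giving $\widetilde H_0(f^{-1}(b),\Z)\cong\Z^{s}$ with the stated basis.

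I expect the genuine obstacle to be precisely the passage from the local Milnor-fibre picture — where contractibility of the ambient ball is obvious — to the global affine setting: one must control $f$ at infinity so that $f^{-1}(\Delta)\simeq\C^m$, and this is exactly where transversality to infinity (tameness) is used, and what guarantees that no homology appears in the fibre beyond what is detected by the vanishing cycles. Granting that, the remaining ingredients — the thimble construction from the Morse Lemma and the exact-sequence bookkeeping — are the standard content of Picard–Lefschetz theory.
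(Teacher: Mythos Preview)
The paper does not give its own proof of this statement: it is quoted as a classical fact with the reference ``(see e.g.\ \cite{1})'' to Arnold--Gusein-Zade--Varchenko, and nothing further. Your argument is the standard one that appears in that reference and in Milnor's book: build the Lefschetz thimbles from the Morse lemma, observe that the total space over the disk is obtained from the regular fibre by attaching $m$-cells along the vanishing spheres, and read off the result from the long exact sequence of the pair once you know the total space is contractible. The organization and the identification of the one nontrivial point --- controlling $f$ at infinity so that $f^{-1}(\Delta)\simeq\C^m$, which is where tameness/transversality to infinity enters --- are correct.

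One small remark on scope: as stated in the paper, $s$ is the number of critical \emph{values}, and the theorem is meant under the genericity assumption (implicit in the paper's later use) that each critical value has a single critical point above it, so that the number of thimbles equals the Milnor number. Your argument uses this implicitly when you attach exactly one $m$-cell per path; it would be worth saying so explicitly, since otherwise the rank count fails.
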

\begin{defi}
Let $\lambda_c$ be a path of a distinguished system, and $\lambda$ be a loop in $\pi_1(\mathbb{C}\setminus C,b)$ such that 
\begin{enumerate*}[label={\alph*)},font={\color{red!50!black}\bfseries}]
\item it turns once anti-clockwise around the critical point $c$, and
\item the closure of the interior of $\lambda$ contains the path $\lambda_c$ and does not contain any other point of $C$.
\end{enumerate*} Then the loop $\lambda$ is called a simple loop, corresponding to $\lambda_c$.    
\end{defi}
The paths that are homotopic to a simple loop $\lambda $  give a class of homotopic homeomorphism maps  $\{h_{\lambda}:f^{-1}(b)\rightarrow f^{-1}(b)\}$. This class defines  a unique well-defined map \[h_{\lambda}:H_{m-1}(f^{-1}(b),\mathbb{Z})\rightarrow H_{m-1}(f^{-1}(b),\mathbb{Z}).\]
\begin{defi}
For a regular value $b$ of $f$, we have 
\begin{gather*} 
h:\pi_1(\mathbb{C}\setminus C,b)\times H_{m-1}(f^{-1}(b),\mathbb{Z})\rightarrow H_{m-1}(f^{-1}(b),\mathbb{Z})\\
h(\lambda,\cdot)=h_{\lambda}(\cdot).
\end{gather*}
The image of $\pi_1(\mathbb{C}\setminus C,b)$ in $Aut(H_{m-1}(f^{-1}(b),\mathbb{Z}))$, is called the monodromy group and its action $h$ is called the action of the monodromy group on the homology group of $f^{-1}(b)$.
\end{defi}
\textbf{Picard-Lefschetz formula:}
Let $\lambda$ be the simple loop around the critical value $c$, the action of monodromy $h_{\lambda}$ on a cycle $\delta\in H_{m-1}(f^{-1}(b),\mathbb{Z})$ is given by 
\begin{equation}
h_{\lambda}(\delta)=\delta+\sum_j(-1)^{\frac{m(m-1)}{2}}<\delta,\delta_j>\delta_j
\end{equation}  
where $j$ runs through all the vanishing cycles around the singularities with value $c$,  and $<\cdot,\cdot>$ denotes the intersection number of two cycles in $f^{-1}(b)$. 

From now on, for simplicity we will denote $h_{\lambda}$ by $\lambda$. 
\subsection{Picard-Lefschetz theory in dimension zero}
Let $f(x)\in\mathbb{C}[x]$ be a polynomial of degree $d$ with real roots $r_i$ where $i=1,\dots,d$, and $d-1$ critical values.  
\begin{theo}\label{5/8/16}
For the regular value $b=0$, we have the following:
\begin{itemize}
	\item $H_0(f^{-1}(b),\mathbb{Z})$ is generated by $\delta_i=[r_i]-[r_{i+1}]$ for $i=1,\dots,d-1$;
	\item The intersection matrix for $H_0(f^{-1}(b),\mathbb{Z})$ with respect to this basis is 
\[
 <\delta_i,\delta_j> = 
  \begin{cases} 
   2 & \text{if } i=j \\
   -1& \text{if}\ |i-j|=1\\
    0& \text{if}\  |i-j|>1.
  \end{cases}
\]
(See e.g. \cite{15} or \cite{7}.)
\end{itemize}
\end{theo}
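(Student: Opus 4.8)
The plan is to recognise the statement as the standard description of the Milnor lattice of a Morse polynomial in one variable, i.e.\ the $A_{d-1}$ root lattice. First I would fix the setup: since $b=0$ is a regular value and $\deg f=d$, the fibre $f^{-1}(0)=\{t_1,\dots,t_d\}$ consists of $d$ distinct real points, and after multiplying $f$ by a nonzero constant (which affects nothing homological) we may assume $f$ has real coefficients and order the roots $t_1<t_2<\cdots<t_d$. The hypothesis that $f$ has the maximal number $d-1$ of critical values forces $f$ to be Morse, and Rolle's theorem puts exactly one critical point $p_i$ in each interval $(t_i,t_{i+1})$; these are all the critical points, with pairwise distinct critical values $c_i=f(p_i)$.

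For the first bullet, the group in question is the vanishing (reduced) homology $\widetilde H_0(f^{-1}(0),\mathbb{Z})=\ker(\mathbb{Z}^d\to\mathbb{Z})$, the kernel of the augmentation, which is free of rank $d-1$. The classes $\delta_i=[t_i]-[t_{i+1}]$, $i=1,\dots,d-1$, are $\mathbb{Z}$-independent (in a relation $\sum c_i\delta_i=0$ the coefficient of $[t_1]$ forces $c_1=0$, then that of $[t_2]$ forces $c_2=0$, and so on), and they span because $[t_i]-[t_j]=\sum_{k=i}^{j-1}\delta_k$ and such differences generate $\widetilde H_0$; hence $\{\delta_i\}$ is a basis. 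To tie this to the vanishing-cycle picture I would also note: fixing a distinguished system of paths $\lambda_{c_i}$ from $0$ to the $c_i$, the Morse lemma gives a local coordinate $z$ at $p_i$ with $f=c_i+z^2$, so along $\lambda_{c_i}$ precisely the two roots bracketing $p_i$ — which continue to $t_i$ and $t_{i+1}$ over $b=0$ — collide; thus the vanishing cycle along $\lambda_{c_i}$ equals $\pm\delta_i$, and the distinguished-basis theorem stated above recovers the basis statement a second time.

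For the second bullet, I would recall that in complex dimension zero the relevant "intersection form" on $\widetilde H_0$ is not a transverse intersection of points but the Milnor--Seifert pairing, which here is simply the restriction to $\widetilde H_0$ of the pairing on $\mathbb{Z}^d$ making the point basis $[t_1],\dots,[t_d]$ orthonormal (equivalently, the pairing pinned down by the Picard--Lefschetz formula together with the fact that the local monodromy $h_{\lambda_{c_i}}$ is the transposition swapping $t_i,t_{i+1}$ and fixing the other roots). With $\langle[t_k],[t_l]\rangle=1$ for $k=l$ and $0$ otherwise, bilinear expansion gives immediately
\[
\langle\delta_i,\delta_j\rangle=\langle[t_i]-[t_{i+1}],\,[t_j]-[t_{j+1}]\rangle=
\begin{cases}2,&i=j,\\ -1,&|i-j|=1,\\ 0,&|i-j|>1,\end{cases}
\]
since consecutive $\delta_i,\delta_{i+1}$ share the single point $t_{i+1}$ with opposite signs while non-consecutive ones involve disjoint pairs of points. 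As a check I would verify consistency with Picard--Lefschetz by applying $h_{\lambda_{c_i}}=(t_i\,t_{i+1})$ to the basis: $\delta_i\mapsto-\delta_i$, $\delta_{i\pm1}\mapsto\delta_{i\pm1}+\delta_i$, and $\delta_j\mapsto\delta_j$ for $|i-j|\ge2$, which matches the formula with the matrix above.

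The only non-routine point — the expected main obstacle — is handling the dimension-zero pairing and the signs correctly: one must use that "intersection number" here means the Milnor-lattice ($A_{d-1}$ Cartan) form, normalised with $+2$ on the diagonal, rather than a literal geometric intersection, and that the distinguished system of paths can be chosen so that each local monodromy is the adjacent transposition $(t_i\,t_{i+1})$ — which in turn rests on each loop $\lambda_{c_i}$ encircling only $c_i$ and on the standard monodromy of $x\mapsto x^2$. Once these conventions are in place the statement reduces to the elementary linear algebra of the root lattice $\{x\in\mathbb{Z}^d:\sum_i x_i=0\}$.
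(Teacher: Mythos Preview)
Your argument is correct and is the standard proof that the vanishing homology of a one-variable Morse polynomial with real roots carries the $A_{d-1}$ root lattice. The paper, however, does not give a proof of this statement at all: it is stated as a known fact with a parenthetical reference to \cite{13} and \cite{6}, and the text moves directly on to the next lemma. So there is no ``paper's own proof'' to compare against; you have supplied what the paper omits.

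A couple of minor remarks. First, your reduction ``after multiplying $f$ by a nonzero constant we may assume $f$ has real coefficients'' is fine for the homological statement but is not even needed: the hypotheses already place the roots on the real line, and the basis/intersection computation only uses the configuration of the fibre points, not the coefficients of $f$. Second, your Picard--Lefschetz consistency check is a nice sanity test, but be aware that the sign in the Picard--Lefschetz formula as written in the paper (with exponent $m(m-1)/2$ for $m$ variables) does not literally reproduce the reflection $\delta_i\mapsto-\delta_i$ when combined with $\langle\delta_i,\delta_i\rangle=+2$; the conventions in the cited sources differ by a global sign, and your check is consistent with the usual $h(\delta)=\delta-\langle\delta,\Delta\rangle\Delta$ normalisation. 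This is a convention issue in the paper, not a flaw in your argument.
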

\begin{lem}\label{23:45}
For any two vanishing cycles $\delta_i ,\ \delta_j \in H_0(f^{-1}(b),\mathbb{Z})$ there is a monodromy $\lambda$ such that $\lambda(\delta_i)=\delta_j$.
\end{lem}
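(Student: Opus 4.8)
The plan is to realise the monodromy group of $f$ acting on $H_0(f^{-1}(0),\mathbb{Z})$ as the full symmetric group permuting the $d$ points of the fibre, and then to exhibit an explicit element of it carrying $\delta_i$ to $\delta_j$. Write $f^{-1}(0)=\{t_1,\dots,t_d\}$, so that $H_0(f^{-1}(0),\mathbb{Z})=\bigoplus_{\ell=1}^{d}\mathbb{Z}[t_\ell]$ and the cycles $\delta_i=[t_i]-[t_{i+1}]$ span its reduced part. Since $f$ is proper and $f^{-1}(\mathbb{C}\setminus C)$ is the complement of a finite set in $\mathbb{C}$, the restriction $f\colon f^{-1}(\mathbb{C}\setminus C)\to\mathbb{C}\setminus C$ is a connected degree-$d$ covering; hence each loop $\lambda\in\pi_1(\mathbb{C}\setminus C,0)$ acts on $H_0(f^{-1}(0),\mathbb{Z})$ simply by permuting the classes $[t_\ell]$, and the monodromy group lands in the symmetric group $S_d$ of $\{t_1,\dots,t_d\}$.

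Next I would pin down this permutation action on the chosen distinguished system of paths: using the Morse normal form $f=c_k+z^2$ near the critical point over $c_k$, together with the dimension-zero Picard--Lefschetz picture recorded in Theorem \ref{5/8/16}, the simple loop $\lambda_k$ whose vanishing cycle is $\delta_k=[t_k]-[t_{k+1}]$ interchanges exactly the two colliding points and fixes the rest, i.e.\ $h_{\lambda_k}$ is the adjacent transposition $(t_k\,t_{k+1})$. This is the step I expect to require the most care: one must check that, for the real Morse polynomial $f$ with its roots ordered on the real axis as in the set-up of Theorem \ref{5/8/16}, the labelling of the vanishing cycles by \emph{adjacent} pairs of roots matches, loop by loop, the \emph{adjacent} transpositions, rather than some other collection of transpositions. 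Once this is established, $h_{\lambda_1},\dots,h_{\lambda_{d-1}}$ are precisely the Coxeter generators of $S_d$, so they generate all of $S_d$; equivalently, every permutation of $\{t_1,\dots,t_d\}$ equals $h_\lambda$ for some $\lambda\in\pi_1(\mathbb{C}\setminus C,0)$.

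To conclude, given $i$ and $j$ I would choose a permutation $\sigma\in S_d$ with $\sigma(t_i)=t_j$ and $\sigma(t_{i+1})=t_{j+1}$ — possible since $d\ge2$ and a partial injection defined on two distinct points always extends to a bijection — and take $\lambda$ with $h_\lambda=\sigma$, so that
\[
\lambda(\delta_i)=h_\lambda\big([t_i]-[t_{i+1}]\big)=[t_j]-[t_{j+1}]=\delta_j .
\]
Alternatively, to stay entirely in terms of the $\lambda_k$ and avoid the extension step, one may move $\delta_i$ to $\delta_{i+1}$ by the $3$-cycle $(t_i\,t_{i+1}\,t_{i+2})$ — a suitable composition of $h_{\lambda_i}$ and $h_{\lambda_{i+1}}$ — which sends $[t_i]-[t_{i+1}]$ to $[t_{i+1}]-[t_{i+2}]$, and then iterate this step upward or downward until $\delta_i$ reaches $\delta_j$. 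Either way the lemma follows.
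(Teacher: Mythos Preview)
Your argument is correct. The paper states this lemma without proof, so there is no ``paper's proof'' to compare against; you are supplying what the author omitted.

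One remark that would let you tighten the write-up: the point you flag as delicate --- checking that the simple loops give precisely the \emph{adjacent} transpositions $(t_k\,t_{k+1})$ --- is in fact unnecessary for the conclusion. All you need is that (i) each simple loop acts as \emph{some} transposition (immediate from the local Morse model $f=c_k+z^2$), and (ii) the monodromy action on the fibre $\{t_1,\dots,t_d\}$ is transitive (immediate since $f^{-1}(\mathbb{C}\setminus C)$ is $\mathbb{C}$ minus finitely many points, hence connected). A transitive subgroup of $S_d$ generated by transpositions is all of $S_d$: the generating transpositions are the edges of a graph on $\{t_1,\dots,t_d\}$, transitivity forces this graph to be connected, and the transpositions along the edges of a connected graph generate the full symmetric group. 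With this observation you can drop the paragraph of caution about matching labels and go straight to the final step.
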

Consider
\begin{eqnarray}
R(x)=\prod_{i=1}^{n}(x-t_i)
\end{eqnarray}
where $(t_i\in\mathbb{R}^+ \ , \ t_i<t_{i+1})$ such that $R$ has $n-1$ distinct critical values. 
Let us define
\begin{eqnarray}
g(x)=\prod_{i=1}^{a+1} (x-s_i),&where& a>1.
\end{eqnarray}
Assume that it meets the following conditions 
\begin{enumerate}
 \item $s_i$s are positive real numbers and $ s_i\neq s_j$ for all $i$ and $j$,
 \item The function $g$ has distinct critical values  also $R(x)=s_i$ has $n$ real roots. $s_i$s are in an interval $I$ such that $g^{-1}(I)$ is a union of $n$ intervals,   
 \item The function $g\circ R(x)$ has only non-degenerate critical points.
\end{enumerate} 
\begin{nota}\label{12:38}
Let us denote by  $C\cup\tilde{C}$ the set of critical values of $g\circ R$, where $C$ is the set of the critical values of $g$, and $\tilde{C}$ is the image of the set of the critical points of $R$ under $g\circ R$. All the critical points of $g$ and $R$ are real. Therefore $C=\{c_i=g(p_i)| p_i\in V(g_x)\ , \ p_1<p_2<\dots<p_a\}$ when $n$ is odd, and $C=\{c_{a+1-i}=g(p_i)| p_i\in V(g_x)\ , \ p_1<p_2<\dots<p_a\}$ when $n$ is even. Also the order of $\tilde{C}$ is as usual  $\{\tilde{c}_{a+j}=g\circ R(q_j) \ | \ q_j\in V(R_x) \ , \ q_1<q_2<\dots<q_{n-1}\}$. 
\end{nota}
Take the distinguished system of paths related to the function $g\circ R$ such that all the paths are in the upper half plane. 
Let $\gamma_c$ be the vanishing cycle along the path $\lambda_c$ of the fiber $g^{-1}(0)$. Therefore $R_*^{-1}(\gamma_c)=\{\delta_c^i|i=1,\dots,n\}$ is the set of vanishing cycles along the path $\lambda_c$ of the fiber $(g\circ R)^{-1}(0)$. 
\begin{theo}\label{7:07}
If $b=0$, then the 
zero homology group of $(g\circ R)^{-1}(b)$ is generated by
\begin{equation}\label{7:07:07}
\delta_c^i,\delta_{\tilde{c}}\ \ where\ \ c\in C \ , \ \tilde{c}\in\tilde{C} \ , \ i=1,\dots,n.
\end{equation} 
\end{theo}
\begin{theo}\label{7:15}
For the regular value $b=0$ of $g\circ R$,
the intersection form of $H_0((g\circ R)^{-1}(b),\mathbb{Z})$ with respect to the basis
 in Theorem \ref{7:07} is as the following:

\[
<\delta_i^j\ , \ \delta_{i'}^{j'}>=
            \begin{cases}
			2 & \text{if }{(i=i',j=j')}\\ 
			-1 & \text{if }
			\begin{cases}
			 (j=j',i'=i+1 ,i<a)\\
			\vee({j'}=0,i=a,{i'}=a+2k-1,j=k+(\frac{(1+(-1)^{n})}{2}))\\
			\vee(j'=0,i=1,i'=a+2k ,j=2k+(\frac{(1+(-1)^{n+1})}{2}))
			 \end{cases}\\
			1 & \text{if }
			\begin{cases}
			(j'=0,i=a,i'=a+2k-1,j=k+(\frac{1+(-1)^{n+1}}{2}))\\ 
			\vee(j'=0,i=1,i'=a+2k,j=2k+(\frac{1+(-1)^n}{2}))
			\end{cases}
			\\ 
            0 & \text{if }
            \begin{cases}
            (j=j',|i'-i|>1) \vee ( j=j'\ ,\ i,i'>a ,  i\neq i' )\\
            \vee(i= i',j\neq j').
			\end{cases}
		\end{cases}
\]
Here, by $\delta_{i}^j$ and $\delta_{a+k}^0$  we mean $\delta_{c_i}^j$ and $\delta_{\tilde{c}_{a+k}}$ respectively, and $k\in\mathbb{N}$.  
\end{theo}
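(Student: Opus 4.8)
The plan is to reduce the whole computation to the dimension-zero Picard--Lefschetz calculus of Theorem \ref{5/8/16}, applied once to $g$ and once to $R$, and then to glue the two pictures along the branch structure of the real polynomial $R$; the basis we work with is the one provided by Theorem \ref{7:07}.

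First I would describe the fibre and the vanishing cycles explicitly. The $n-1$ real critical points $q_1<\dots<q_{n-1}$ of $R$ cut $\mathbb{R}$ into $n$ monotone pieces (\emph{branches}) $B_1,\dots,B_n$; on each $B_j$ the map $R$ is a real-analytic diffeomorphism onto its image, and by the hypotheses (the $s_i$ are positive, $R(x)=s_i$ has $n$ real roots, $g\circ R$ is Morse) every root $s_l$ of $g$ has a unique preimage $x^j_l:=(R|_{B_j})^{-1}(s_l)$ on each $B_j$, so $(g\circ R)^{-1}(0)=R^{-1}(g^{-1}(0))=\{x^j_l\}$ has $n(a+1)$ points. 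Near a point of $(g\circ R)^{-1}(c_i)$ lying over a critical point of $g$ the map $R$ is a local biholomorphism (the critical points of $R$ map into $\tilde{C}$, which is disjoint from $C$), so the Morse lemma for $g$ shows that $\delta^j_{c_i}$ is, on the branch $B_j$, the $R$-preimage of the vanishing cycle $\gamma_{c_i}\in H_0(g^{-1}(0))$ of Theorem \ref{5/8/16}; thus $\delta^j_{c_i}=\pm\bigl([x^j_{m}]-[x^j_{m+1}]\bigr)$ for two consecutive roots $s_m,s_{m+1}$ of $g$. Dually, near a critical point $q_k$ of $R$ the value $R(q_k)$ is a regular value of $g$, so $g$ is there a local biholomorphism and $\delta_{\tilde{c}_{a+k}}$ is, up to sign, the vanishing cycle of $R$ itself at $q_k$ transported back to $(g\circ R)^{-1}(0)$: it is the difference of the two preimages under $R$, lying one on $B_k$ and one on $B_{k+1}$, of a single root of $g$ that merge at $q_k$. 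Since $q_k$ is a local extremum whose type alternates with $k$, and since all the $s_l$ lie strictly between the local minimum and local maximum values of $R$, that root is forced to be the smallest one $s_1$ or the largest one $s_{a+1}$; translating this through Notation \ref{12:38} and the reversal of the labelling of $C$ when $n$ is even is exactly what produces the indices $a+2k-1$, $a+2k$ and the parity terms $\tfrac{1\pm(-1)^n}{2}$ in the statement.

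With the cycles written as these explicit differences of points, the intersection numbers follow from Theorem \ref{5/8/16} together with the elementary dimension-zero rule that $\langle[p]-[q],[p']-[q']\rangle$ depends only on which endpoints coincide and with which orientation. Each $\langle\delta,\delta\rangle=2$ because the monodromy around a Morse value acts as a reflection (the Picard--Lefschetz formula, cf.\ Theorem \ref{5/8/16}). Two pull-back cycles on distinct branches are supported on disjoint sets of points and hence pair to $0$, while on a common branch $B_j$ the diffeomorphism $R|_{B_j}$ identifies $\delta^j_{c_1},\dots,\delta^j_{c_a}$ with $\gamma_{c_1},\dots,\gamma_{c_a}$, so their mutual pairing is the $A_a$-chain answer $2,-1,0$; this is the ``$i'=i+1$, $i<a$'' line. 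Two distinct $R$-type cycles are localised near distinct $q_k$'s, and because consecutive $q_k$'s alternate type they sit over different extreme roots, so they share no point and pair to $0$. Finally $\delta^j_{c_i}$ meets $\delta_{\tilde{c}_{a+k}}$ precisely when it has one of the two endpoints of $\delta_{\tilde{c}_{a+k}}$ as an endpoint; this forces $c_i$ to be the critical value of $g$ adjacent to an extreme root --- i.e.\ $c_a$ or $c_1$ after relabelling --- and $j\in\{k,k+1\}$, the two admissible branches contributing opposite signs $-1$ and $+1$ because they use the two oppositely-oriented endpoints of $\delta_{\tilde{c}_{a+k}}$. Assembling these cases yields the displayed matrix.

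I expect the genuinely delicate part to be the orientation and labelling bookkeeping rather than any single topological input: one must fix coherent orientations for the vanishing $S^0$'s that define the $\delta^j_{c_i}$ and $\delta_{\tilde{c}_{a+k}}$, carry them through the local biholomorphisms and through the alternation of both the branches $B_j$ and the critical-point types of $R$, and check compatibility with the ordering conventions of Notation \ref{12:38} (which already absorbs the $n$ even versus $n$ odd dichotomy). This is what pins down the split of the $R$-type cycles into the family pairing with $c_a$ and the family pairing with $c_1$, the precise shifts $a+2k-1$ versus $a+2k$, and the terms $\tfrac{1\pm(-1)^n}{2}$; the rest is a finite if somewhat laborious verification.
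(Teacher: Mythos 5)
Your proposal is correct and follows essentially the same route as the paper: both realize each $\delta^j_{c_i}$ and $\delta_{\tilde{c}_{a+k}}$ as an explicit difference of points of the zero-dimensional fibre $(g\circ R)^{-1}(0)$ (the tangency cycles sitting over the extreme roots $s_1$ or $s_{a+1}$ according to whether the critical point of $R$ is a local minimum or maximum) and then read off the pairings from endpoint coincidences together with Theorem \ref{5/8/16}. Your branch decomposition $B_1,\dots,B_n$ is just a cleaner bookkeeping device for the same computation, and your deferral of the orientation and labelling verification matches the level of detail in the paper's own proof.
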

\begin{proof}
Assume that $n$ is odd. Consider the vanishing cycle $\delta_{a+2k-1}^0:=[r]-[r']$ , where $r$ and $r'$ are two consecutive roots of $g\circ R$. Thus $R(r)=R(r')=s_{a+1}$ is a root of $g$. Let $s_{a}$ and $s_{a+1}$ be two consecutive roots of $g$ and also let $R^{-1}(s_{a})=\{l_j| l_1<l_2<\dots<l_n\}$ such that $l_k<r'<r<l_{k+1}$ be four consecutive roots of $g\circ R$. Two vanishing cycles on the basis \eqref{7:07:07} are $\delta_{a}^{2k-1}=[r']-[l_k]\ $ and $ \ \delta_{a}^{2k}=[r]-[l_{k+1}]$; therefore
\[
<\delta_{a+2k-1}^0,\delta_{a}^{2k-1}>=-<\delta_{a+2k-1}^0,\delta_{a}^{2k}>=-1
\]
\[
<[r]-[r'],[r']-[l_k]>=-<[r]-[r'],[r]-[l_{k+1}]>=-1.
\]
For the vanishing cycle $\delta_{a+2k}^0:=[r_2]-[r_1] $ where $r_1$ are $r_2$ are two consecutive roots of $g\circ R$, we have $R_*(\delta_{a+2k})=0$ and $R(r_1)=R(r_2)=s_1$ where $s_1$ is a root of $g$. For the root $s_2$ of $g$ let $R^{-1}(s_2)=\{ l_j| l_1< l_2<\dots<l_n\}$, so we have $l_{2k}<r_1<r_2<l_{2k+1}$ as consecutive roots of 
$g\circ R$. Therefore $\delta_1^{2k}=[r_1]-[l_{2k}]$ and $\delta_1^{2k+1}=[l_{2k+1}]-[r_{2}]$ are vanishing cycles and they are in the basis so we have 
\[
<\delta_{a+2k}^0,\delta_{1}^{2k}>=-<\delta_{a+2k}^0,\delta_{1}^{2k+1}>=1
\]
\[
<[r_2]-[r_1],[l_{2k}]-[r_1]>=-<[r_2]-[r_1],[l_{2k+1}]-[r_2]>=1.
\]
 Also the above procedure can work for an even number $n$ but 
only by changing the order of $C$. 
The function $R$ induces the surjective morphism 
\[
R_*:H_0((g\circ R)^{-1}(b),\mathbb{Z})\rightarrow H_0(g^{-1}(b),\mathbb{Z}).
\]
If $\gamma_{c_i},\gamma_{c_{i+1}}\in  H_0(g^{-1}(b),\mathbb{Z})$ are two vanishing cycles where $c_i,c_{i+1}\in C$, then each set $R_*^{-1}(\gamma_{c_m})=\{\delta_{c_m}^j|j=1,2,\dots , n\}$, $m=i,i+1$, contains $n$ separated vanishing cycles. For each cycle $\delta_{c_i}^j$ there is exactly one cycle $\delta_{c_{i+1}}^j$ such that $<\delta_{c_i}^j,\delta_{c_{i+1}}^j>=-1$. By definition of the functions $R$ and $g$ and also Theorem \ref{5/8/16}, we can have the other equalities.    
\end{proof}
\begin{defi}
The Dynkin diagram of a polynomial with only non-degenerate critical points is a graph defined in the following way: Its vertices are in one-to-one correspondence with a distinguished basis of vanishing cycles $\delta_i$, $i=1,2,...,\mu=d-1$. The i-th and j-th vertices of the graph are joined with an edge of multiplicity $<\delta_i,\delta_j>$.
 The intersection indexes $(-1)^n$ are depicted by dash lines, where $n$ is the dimension of vanishing cycles.  
\end{defi}
For an illustration of the Dynkin diagram $g\circ R$ with respect to this basis (when $n$ is odd) see Figure \ref{17;21:21}.

\begin{figure}[h]
\centering
\includegraphics[width=8cm]{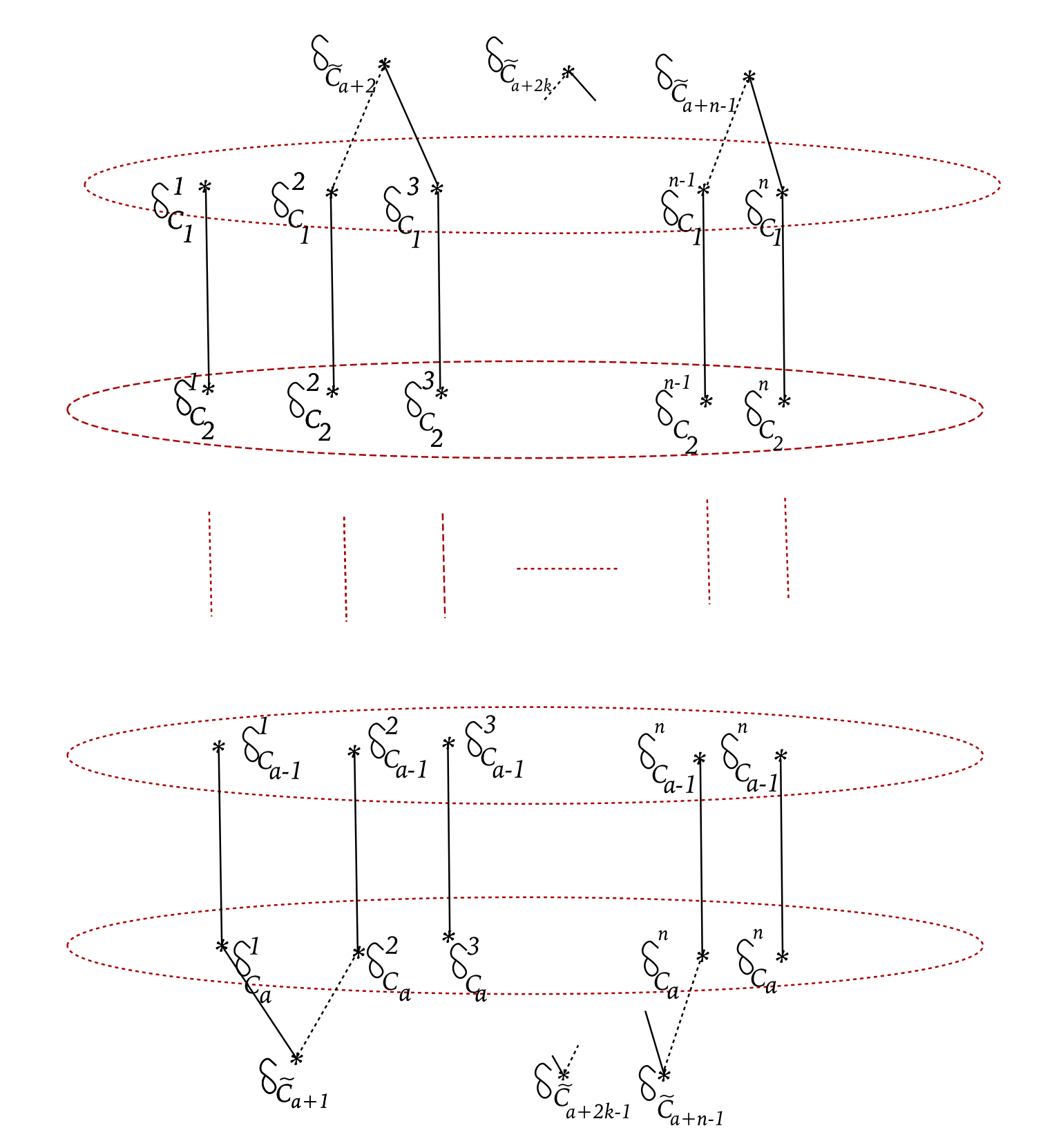}
\caption{Dynkin diagram of a pull-back polynomial when $n$ is odd.}\label{17;21:21}
\end{figure}

\begin{lem}\label{16:42}
The action of monodromy group $\pi_1(\mathbb{C}\setminus C\cup\tilde{C},b)$ on a tangency vanishing cycle generates all
$$
\delta_{\tilde{c}}\ , \ \tilde{c}\in\tilde{C}\ \ and\ \ \delta_{c}^i-\delta_c^j \ \ s.t\ \ c\in C \ , \ i,j=1,\dots ,n
$$ 	
where a tangency vanishing cycle is a vanishing cycle around a critical point of $R$.
\end{lem}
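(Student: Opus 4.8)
The plan is to run the argument on the Dynkin diagram of $g\circ R$ described in Theorem \ref{7:15} and Figure \ref{17;21:21}, exploiting the Picard--Lefschetz formula repeatedly. Recall the two kinds of vanishing cycles in the distinguished basis: the "pull-back" cycles $\delta_c^i$, $c\in C$, $i=1,\dots,n$, which come in fibers of $n$ over each critical value of $g$, and the "tangency" cycles $\delta_{\tilde c}$, $\tilde c\in\tilde C$, one for each critical point of $R$. Fix one tangency cycle $\delta:=\delta_{\tilde c_{a+2k-1}}$ (say $n$ odd, the even case being identical up to reordering $C$ as in Notation \ref{12:38}). Let $G$ denote the subgroup of $H_0((g\circ R)^{-1}(b),\Z)$ generated by the monodromy orbit of $\delta$; the goal is to show $G$ contains every $\delta_{\tilde c}$ and every difference $\delta_c^i-\delta_c^j$.

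First I would produce, inside $G$, the differences at the level of $\delta$'s own value. By Theorem \ref{7:15}, $\langle\delta,\delta_a^{2k-1}\rangle=-1$ and $\langle\delta,\delta_a^{2k}\rangle=+1$ (and $\delta$ has intersection zero with all other basis cycles). Applying the simple loop $\lambda$ around the value $\tilde c_{a+2k-1}$ to $\delta_a^{2k-1}$ and to $\delta_a^{2k}$ and subtracting off the base cycles, the Picard--Lefschetz formula yields $\pm\langle\delta_a^{2k-1},\delta\rangle\delta$ and $\pm\langle\delta_a^{2k}\rangle\delta$ — these only give multiples of $\delta$, so instead I apply the loop around $\tilde c_{a+2k-1}$ to sums: $h_\lambda(\delta_a^{2k-1}+\delta_a^{2k})-(\delta_a^{2k-1}+\delta_a^{2k})= (\langle\delta_a^{2k-1},\delta\rangle+\langle\delta_a^{2k},\delta\rangle)\delta=0$, while $h_\lambda(\delta_a^{2k-1})-\delta_a^{2k-1}=-\langle\delta_a^{2k-1},\delta\rangle\,\delta\cdot(\pm1)$. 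The cleaner route: since $\langle\delta,\delta_a^{2k-1}-\delta_a^{2k}\rangle\ne 0$ while $\langle\delta,\delta_a^{2k-1}+\delta_a^{2k}\rangle=0$, successive Picard--Lefschetz reflections in $\delta$ together with reflections in the neighbouring $\delta_a^{j}$ move $\delta$ around and, by the connectedness of the relevant portion of the Dynkin graph, generate $\delta_a^{2k-1}-\delta_a^{2k}$ as an element of $G$. This is the mechanism of Lemma \ref{23:45} applied fibrewise: within a single fiber $R^{-1}(\gamma_{c})$ the cycles $\delta_c^1,\dots,\delta_c^n$ form, after contracting the base $\gamma_c$, an $A_{n-1}$-type chain via the tangency cycles lying over the critical points of $R$, so all differences $\delta_c^i-\delta_c^j$ for that particular $c$ enter $G$.

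Next I would propagate along $C$. Theorem \ref{7:15} shows that for consecutive $c_i,c_{i+1}\in C$ one has $\langle\delta_{c_i}^j,\delta_{c_{i+1}}^j\rangle=-1$ for each $j$; hence reflecting a difference $\delta_{c_i}^1-\delta_{c_i}^2\in G$ in the cycles $\delta_{c_{i+1}}^1$ and $\delta_{c_{i+1}}^2$ transports it to $\delta_{c_{i+1}}^1-\delta_{c_{i+1}}^2$ up to sign, so by induction on $i$ every difference $\delta_c^i-\delta_c^j$, $c\in C$, lands in $G$. It remains to get the tangency cycles $\delta_{\tilde c}$ themselves. Those over critical points of $R$ that lie in the fiber structure appear directly: a difference $\delta_c^i-\delta_c^{i+1}$ and a neighbouring $\delta_{c'}^i-\delta_{c'}^{i+1}$ have a combination whose Picard--Lefschetz reflection picks out the tangency cycle joining the two fibers (by the "$-1$ / $+1$" entries in the third and fourth cases of the matrix in Theorem \ref{7:15}); chasing the diagram — which is connected once restricted to the subgraph spanned by tangency cycles and these differences — yields every $\delta_{\tilde c}$, $\tilde c\in\tilde C$. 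Finally one checks the reverse inclusion is automatic: the monodromy orbit of $\delta$ consists of classes mapping to $0$ under $R_*$ (the tangency value loops act trivially on $H_0(g^{-1}(b))$ by construction, conditions (2)–(4) on $g,R$), and $\ker R_*$ is exactly the span of the $\delta_{\tilde c}$ and the differences $\delta_c^i-\delta_c^j$ — this is the statement compatible with Theorem \ref{1:29:1}.

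The main obstacle I anticipate is the bookkeeping in the second step: verifying that the subgraph of the Dynkin diagram spanned by $\{\delta:\text{tangency}\}\cup\{\delta_c^i-\delta_c^j\}$ is connected and that each reflection does what the sign pattern in Theorem \ref{7:15} promises — in particular keeping track of the index shifts ($i\mapsto a+2k-1$ versus $a+2k$, and the parity-dependent reindexing of $C$ when $n$ is even) so that no difference is missed and no spurious multiple-of-$\delta$ term obstructs the argument. Concretely this amounts to an explicit but finite induction over the vertices of Figure \ref{17;21:21}, which I would carry out by first treating a single fiber (the $A_{n-1}$ chain) and then gluing fibers along $C$ one critical value at a time.
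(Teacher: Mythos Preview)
Your overall strategy (Picard--Lefschetz on the Dynkin diagram of Theorem \ref{7:15}) matches the paper's, but there is a genuine gap in the execution.

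The claim that ``within a single fiber $R^{-1}(\gamma_c)$ the cycles $\delta_c^1,\dots,\delta_c^n$ form \dots an $A_{n-1}$-type chain via the tangency cycles'' is incorrect. By the last case of Theorem \ref{7:15}, $\langle\delta_c^i,\delta_c^j\rangle=0$ for $i\neq j$: the pull-back cycles over a fixed $c$ are pairwise disjoint. They are linked only through the tangency cycles, and each tangency cycle $\delta_{\tilde c_{a+2k-1}}$ (resp.\ $\delta_{\tilde c_{a+2k}}$) meets exactly two of them, namely $\delta_{c_a}^{2k-1},\delta_{c_a}^{2k}$ (resp.\ $\delta_{c_1}^{2k},\delta_{c_1}^{2k+1}$). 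So from a single tangency cycle you obtain \emph{one} difference, not all of them; your step ``all differences $\delta_c^i-\delta_c^j$ for that particular $c$ enter $G$'' does not follow. Relatedly, your opening move applies the loop around $\tilde c_{a+2k-1}$ to $\delta_a^{2k-1}$ and $\delta_a^{2k}$, but those cycles are not yet known to lie in $G$, so this produces nothing for $G$.

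The paper fixes both issues by interleaving the steps rather than separating them. The correct first move is to apply the monodromy $\lambda_{c_a}$ around $c_a\in C$ to the tangency cycle $\delta=\delta_{\tilde c_{a+2k-1}}$ itself: Picard--Lefschetz gives $h_{\lambda_{c_a}}(\delta)-\delta=\pm(\delta_{c_a}^{2k-1}-\delta_{c_a}^{2k})$, a single difference. One then propagates this single difference along $C$ via Lemma \ref{23:45} (your second paragraph, which is fine) to obtain $\delta_c^{2k-1}-\delta_c^{2k}$ for every $c\in C$, in particular for $c=c_1$. Applying the tangency monodromy $\lambda_{\tilde c_{a+2k}}$ to $\delta_{c_1}^{2k}-\delta_{c_1}^{2k-1}$ now yields the \emph{next} tangency cycle $\delta_{\tilde c_{a+2k}}\in G$, and the procedure repeats: one walks along the chain of tangency cycles, picking up one new consecutive difference $\delta_c^{i}-\delta_c^{i+1}$ at each stage. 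After $n-1$ iterations all tangency cycles and all consecutive differences are in $G$, hence all differences. Your proposal has the right ingredients but the wrong order; the argument is a zig-zag between the two kinds of monodromy, not a two-phase ``first all differences, then all tangency cycles'' scheme.
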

\begin{proof}
Each tangency vanishing cycle $\delta_{\tilde{c}_{a+2i-1}}$ (resp. $\delta_{\tilde{c}_{a+2i}}$ ) has an intersection with two vanishing cycles $\delta_{c_a}^{2i-1}$ and $\delta_{c_a}^{2i}$ (resp. $\delta_{c_1}^{2i}$ and $\delta_{c_1}^{2i+1}$ ) with different signs. By using Picard-Lefschetz formula, the action of monodromy $\lambda_{c_a}$ (resp. $\lambda_{c_1}$) on $\delta_{\tilde{c}_{a+2i-1}}$ (resp. $\delta_{\tilde{c}_{a+2i}}$ ) generates $\delta_{c_a}^{2i-1}-\delta_{c_a}^{2i}$ (resp. $\delta_{c_1}^{2i}-\delta_{c_1}^{2i+1}$ ). According to Lemma \ref{23:45}, the action of the monodromy group $$\pi:=<\lambda_c|\ c\in C>\subset\pi_1(\mathbb{C}\setminus((C\cup\tilde{C}),b ),$$ on $R_*(\delta_c^j)$ (for all $c\in C$ and $j$) generates zero homology group $H_0(g^{-1}(b),\mathbb{Z})$. Therefore, for a fixed $j$, the action of $\pi$ on $\delta_{c_a}^j$ or $\delta_{c_1}^j$  can generate all $\delta_c^j$. In other words, the action $\pi$ on $\delta_{c_a}^{2i}-\delta_{c_a}^{2i-1}$ (resp. $\delta_{c_1}^{2i+1}-\delta_{c_1}^{2i}$ ) generates all $\delta_c^{2i}-\delta_c^{2i-1}$ (resp. $\delta_c^{2i+1}-\delta_c^{2i}$). Since Dynkin diagram is connected, the action of monodromy $\lambda_{\tilde{c}_{a+2i}}$ ($\lambda_{\tilde{c}_{a+2i+1}}$) on $\delta_{c_{1}}^{2i}-\delta_{c}^{2i}$ can generate $\delta_{\tilde{c}_{a+2i}}$ (resp. $\delta_{\tilde{c}_{a+2i+1}}$). By repeating this procedure we can generate all $\delta_{\tilde{c}}$ because the degree of each vertex of the Dynkin diagram, is at most 2. Since the number of tangency vanishing cycles is $n-1$ we can generate independent cycles $\delta_c^{i+1}-\delta_{c}^i$ where $i=1,\dots,n-1$. Therefore these cycles can generate all $\delta_c^{j'}-\delta_c^j$. 
\end{proof}
\begin{coro}
The linear map $R_*:H_0((g\circ R)^{-1}(b),\mathbb{Z})\rightarrow H_0(g^{-1}(b),\mathbb{Z})$ is surjective and 
\[Ker(R_*)=<\pi_1(\mathbb{C}\setminus (C\cup\tilde{C}),b).\delta_{\tilde{c}}>,\]
where $<\pi_1(\mathbb{C}\setminus (C\cup\tilde{C}),b).\delta_{\tilde{c}}>$ is the group generated by the action of the monodromy group on the tangency vanishing cycle $\delta_{\tilde{c}}$.
\end{coro}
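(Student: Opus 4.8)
\emph{Proof proposal.} The plan is to deduce the corollary from Lemma~\ref{16:42} together with the explicit basis of Theorem~\ref{7:07} and the dimension-zero Picard--Lefschetz picture (Theorem~\ref{5/8/16}, Lemma~\ref{23:45}); once these are in place the statement is essentially bookkeeping. First I would settle surjectivity: since $g$ has $a+1$ distinct real roots and each equation $R(x)=s_i$ has $n$ real roots, for $b=0$ the fibre $(g\circ R)^{-1}(b)$ consists of $n(a+1)$ points while $g^{-1}(b)$ consists of $a+1$, so $H_0((g\circ R)^{-1}(b),\mathbb{Z})$ has rank $na+n-1$ and $H_0(g^{-1}(b),\mathbb{Z})$ has rank $a$. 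By construction of the cycles $\delta_c^i$ one has $R_*(\delta_c^i)=\gamma_c$ for every $c\in C$ and $i=1,\dots,n$, and the $\gamma_c$ ($c\in C$) form a distinguished basis of $H_0(g^{-1}(b),\mathbb{Z})$; hence $R_*$ is onto.

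Next I would pin down $\ker R_*$ as an abstract subgroup. Writing a general class in the basis of Theorem~\ref{7:07} as $\sum_{c\in C}\sum_{i=1}^{n}a_{c,i}\,\delta_c^i+\sum_{\tilde c\in\tilde C}b_{\tilde c}\,\delta_{\tilde c}$, and using that $R_*(\delta_{\tilde c})=0$ (a tangency cycle joins two roots of $g\circ R$ with the same $R$-value) together with the independence of the $\gamma_c$, the condition of lying in $\ker R_*$ is exactly $\sum_i a_{c,i}=0$ for each $c\in C$. Hence $\ker R_*$ is generated by the tangency cycles $\delta_{\tilde c}$ ($\tilde c\in\tilde C$) and the differences $\delta_c^i-\delta_c^j$ ($c\in C$, $1\le i,j\le n$); the rank count $(n-1)+a(n-1)=(a+1)(n-1)=na+n-1-a$ confirms this is all of $\ker R_*$.

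Finally, Lemma~\ref{16:42} says precisely that the action of $\pi_1(\mathbb{C}\setminus(C\cup\tilde C),b)$ on a single tangency cycle $\delta_{\tilde c}$ generates all the $\delta_{\tilde c'}$ and all the $\delta_c^i-\delta_c^j$, i.e.\ exactly the generating set of $\ker R_*$ found above. To get the reverse inclusion I would observe that $R$ induces a morphism of the homology local systems of $g\circ R$ and of $g$ over the common punctured base $\mathbb{C}\setminus(C\cup\tilde C)$, so $R_*$ intertwines the two monodromy representations and $\ker R_*$ is monodromy-invariant; therefore the subgroup generated by the monodromy orbit of $\delta_{\tilde c}$ lies in $\ker R_*$, while by Lemma~\ref{16:42} it contains a full generating set, giving the claimed equality. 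The only point needing a word of care is this compatibility of $R_*$ with the two monodromy actions --- loops around $\tilde C$ need not be loops around critical values of $g$ --- but it is nothing more than functoriality of homology applied to the map of fibrations $R\colon (g\circ R)^{-1}(b)\to g^{-1}(b)$ as $b$ varies over the punctured base; all the real content sits in Lemma~\ref{16:42}, which is already proved.
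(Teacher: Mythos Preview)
Your argument is correct and follows the route the paper intends: the corollary is stated immediately after Lemma~\ref{16:42} without a separate proof, so the paper treats it as a direct consequence, and your write-up makes that deduction explicit via the basis of Theorem~\ref{7:07}, the rank count $(n-1)+a(n-1)=(a+1)(n-1)=\operatorname{rank}\ker R_*$, and the monodromy-invariance of $\ker R_*$. This is exactly the one-variable analogue of the explicit proof the paper later gives for Theorem~\ref{18:00}, so there is no meaningful difference in approach.
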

\begin{prop}\label{19:26}
The group generated by the action of the monodromy group $\pi_1(\mathbb{C}\setminus (C\cup\tilde{C}),b)$ on a vanishing cycle $\delta_c^j\in R_*^{-1}(\gamma_c)$, where $\gamma_c\in H_0(g^{-1}(b),\mathbb{Z})$,  is equal to  $H_0((g\circ R)^{-1}(b),\mathbb{Z})$.  
\end{prop}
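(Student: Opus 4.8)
The plan is to show that the subgroup $G\subseteq H_0((g\circ R)^{-1}(b),\mathbb{Z})$ generated by the orbit $\pi_1(\mathbb{C}\setminus(C\cup\tilde C),b)\cdot\delta_c^j$ is the whole group, and I would do it in four steps: (i) $G$ contains an entire ``sheet'' of vanishing cycles $\delta_{c_1}^j,\dots,\delta_{c_a}^j$ lying over the critical values of $g$; (ii) hence $G$ contains a tangency vanishing cycle; (iii) hence $\ker(R_*)\subseteq G$; (iv) since moreover $R_*(G)=H_0(g^{-1}(b),\mathbb{Z})$, an exact-sequence argument forces $G=H_0((g\circ R)^{-1}(b),\mathbb{Z})$. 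As in the earlier proofs I would assume $n$ is odd. Observe first that $G$ is invariant under the whole monodromy group, being generated by a monodromy orbit.

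For step (i) I would argue exactly as in the proof of Lemma \ref{16:42}. Fix the sheet index $j$ and let $\Lambda_j$ be the span of $\delta_{c_1}^j,\dots,\delta_{c_a}^j$. By Theorem \ref{7:07} the vanishing cycles over a value $c_i\in C$ are precisely $\delta_{c_i}^1,\dots,\delta_{c_i}^n$, and by Theorem \ref{7:15} these are mutually orthogonal, with $\delta_{c_i}^{j'}$ orthogonal to $\Lambda_j$ whenever $j'\neq j$; consequently, by the Picard--Lefschetz formula, the simple loop $\lambda_{c_i}$ preserves $\Lambda_j$ and acts on it as the reflection in $\delta_{c_i}^j$. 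Since the cycles $\delta_{c_1}^j,\dots,\delta_{c_a}^j$ form a chain of type $A_a$ (Theorem \ref{7:15}), the subgroup generated by $\lambda_{c_1},\dots,\lambda_{c_a}$ acts on $\Lambda_j$ as the corresponding Weyl group, whose orbit of any root contains every simple root; equivalently one invokes Lemma \ref{23:45} together with the compatibility of $R_*$ with monodromy. Either way, the orbit of $\delta_c^j$ under $\lambda_{c_1},\dots,\lambda_{c_a}$ contains $\delta_{c_1}^j,\dots,\delta_{c_a}^j$, so all of these lie in $G$.

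For steps (ii) and (iii) I would use the intersection matrix of Theorem \ref{7:15} again: one checks that each sheet index $j\in\{1,\dots,n\}$ occurs among the cycles $\delta_{c_a}^{2k-1},\delta_{c_a}^{2k}$ (which meet the tangency cycle $\delta_{\tilde c_{a+2k-1}}$ with intersection number $\pm1$) and $\delta_{c_1}^{2k},\delta_{c_1}^{2k+1}$ (which meet $\delta_{\tilde c_{a+2k}}$). Hence there is a tangency value $\tilde c\in\tilde C$ and $c_\ast\in\{c_1,c_a\}$ with $\langle\delta_{c_\ast}^j,\delta_{\tilde c}\rangle=\pm1$. Because $C\cap\tilde C=\emptyset$ and $g\circ R$ is Morse, exactly one vanishing cycle lies over $\tilde c$, so the Picard--Lefschetz formula gives $h_{\lambda_{\tilde c}}(\delta_{c_\ast}^j)=\delta_{c_\ast}^j\pm\delta_{\tilde c}$; since $\delta_{c_\ast}^j\in G$ by step (i), it follows that $\delta_{\tilde c}\in G$. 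As $G$ is monodromy-invariant it then contains the whole orbit of $\delta_{\tilde c}$, which by the Corollary following Lemma \ref{16:42} equals $\ker(R_*)$.

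Finally, for step (iv), $R_*(\delta_{c_i}^j)=\gamma_{c_i}$ and the $\gamma_{c_i}$ form a basis of $H_0(g^{-1}(b),\mathbb{Z})$, so step (i) already gives $R_*(G)=H_0(g^{-1}(b),\mathbb{Z})$; combined with $\ker(R_*)\subseteq G$ and the exact sequence $0\to\ker(R_*)\to H_0((g\circ R)^{-1}(b),\mathbb{Z})\to H_0(g^{-1}(b),\mathbb{Z})\to 0$ (the second map being $R_*$), every class is a sum of an element of $G$ and an element of $\ker(R_*)\subseteq G$, whence $G=H_0((g\circ R)^{-1}(b),\mathbb{Z})$. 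The one genuinely nontrivial point is step (i): a general $\delta_c^j$ sits over an ``interior'' critical value of $g$ and is adjacent to no tangency cycle, so one must first use the monodromy over the values of $C$ (equivalently, the transitivity provided by Lemma \ref{23:45}, or the Weyl-group action on the $A_a$-chain) to push it onto an extreme critical value that is adjacent to a tangency cycle; once this is done, the remaining steps are bookkeeping with the intersection matrix and the exact sequence.
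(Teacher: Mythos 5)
Your argument is correct, and it is worth noting that the paper itself states Proposition \ref{19:26} without any proof, so there is nothing to compare it against except the surrounding machinery; your proof is exactly the kind of elaboration that machinery supports. Steps (ii)--(iv) are straightforward given the Corollary of Lemma \ref{16:42} (which is proved before Proposition \ref{19:26}, so there is no circularity): once a single tangency cycle lies in the monodromy-invariant subgroup $G$, the Corollary gives $\ker(R_*)\subseteq G$, and surjectivity of $R_*|_G$ onto the basis $\gamma_{c_1},\dots,\gamma_{c_a}$ finishes it. The only point that needed care is the one you flagged, step (i), and your treatment is sound: by Theorem \ref{7:15} the cycles over a fixed $c_i\in C$ are pairwise orthogonal and a cycle $\delta_{c_i}^{j'}$ with $j'\neq j$ meets no $\delta_{c_l}^{j}$ (the listed zero cases of the intersection matrix do not literally cover the case $l\neq i$, $j'\neq j$, but these cycles are supported on disjoint sets of roots of $g\circ R$, consistent with the Dynkin diagram of Figure \ref{17;21:21}, so the intersection is $0$), hence the simple loops $\lambda_{c_1},\dots,\lambda_{c_a}$ preserve the sheet $\Lambda_j$ and act on its $A_a$-chain transitively on roots; this is the same transitivity argument the paper itself invokes via Lemma \ref{23:45} inside the proof of Lemma \ref{16:42}. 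Your observation that every sheet index $j$ appears among the cycles $\delta_{c_a}^{2k-1},\delta_{c_a}^{2k},\delta_{c_1}^{2k},\delta_{c_1}^{2k+1}$ adjacent to some tangency vertex is also verified by Theorem \ref{7:15}, so step (ii) goes through. In short: no gap, and the proof is a legitimate filling-in of a proof the paper omits.
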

\subsection{Direct Sum of Polynomials}
Let $F$ and $f$ be the functions as in
\eqref{12:35} 
and \eqref{12:51}. We are going to study the topology of a regular fiber of $f\circ F$.
\begin{nota}\label{11:38:17}
We denote by $C_1$ (resp. $C_2$) the set of critical values  of $g$ (resp. $h$), and also denote by $\tilde{C}_1$ (resp. $\tilde{C}_2$) the set of the images of the critical points of $R$ (resp. $S$) under $g\circ R$ (resp. $h\circ S$). Thus $C_1\cup\tilde{C}_1$ and $C_2\cup\tilde{C}_2$ are the set of critical values of $g\circ R$ and $h\circ S$ respectively. Without loss of generality, we can assume that $(C_1\cup\tilde{C}_1)\cap(C_2\cup\tilde{C}_2)=\emptyset$. 
\end{nota}
We take two systems of distinguished paths $\lambda_c$ relative to the functions $g\circ R$ and $h\circ S$, where $c\in (C_1\cup \tilde{C}_1)\cup (C_2\cup\tilde{C}_2)$ and $\lambda_c$ starts from $b=0$ and ends at $c$, see Figure \ref{11:04}. Note that for the function $h\circ S$ we choose a distinguished system of paths such that all of the paths are in the lower half plane, and they preserve the order of $C_2\cup\tilde{C}_2$ as in Notation  \ref{11:38:17}.
Let $\delta\in H_0((g\circ R)^{-1}(0),\mathbb{Z})$ and $\gamma\in H_0((h\circ S)^{-1}(0),\mathbb{Z})$ be two vanishing cycles along the paths $\lambda_{c}$ and $\lambda_{a}$ respectively. Let $t_s:[0,1]\rightarrow \mathbb{C}$ be a path defined by 
\[
t_s:=\begin{cases}
    \lambda_{c}(1-2s)&  0\leq s\leq \frac{1}{2}\ , \ c\in (C_1\cup\tilde{C}_1)\\
    \lambda_{a}(2s-1)  &  \frac{1}{2}\leq s\leq 1 \ , \ a\in (C_2\cup\tilde{C}_2).
    \end{cases}
\]
The cycle $\delta$ vanishes along $t_.^{-1}$ when $s$ tends to zero, and $\gamma$ vanishes along $t_.$ when s tends to 1.  
\begin{figure}[h]
  \centering
  \includegraphics[width=2 in]{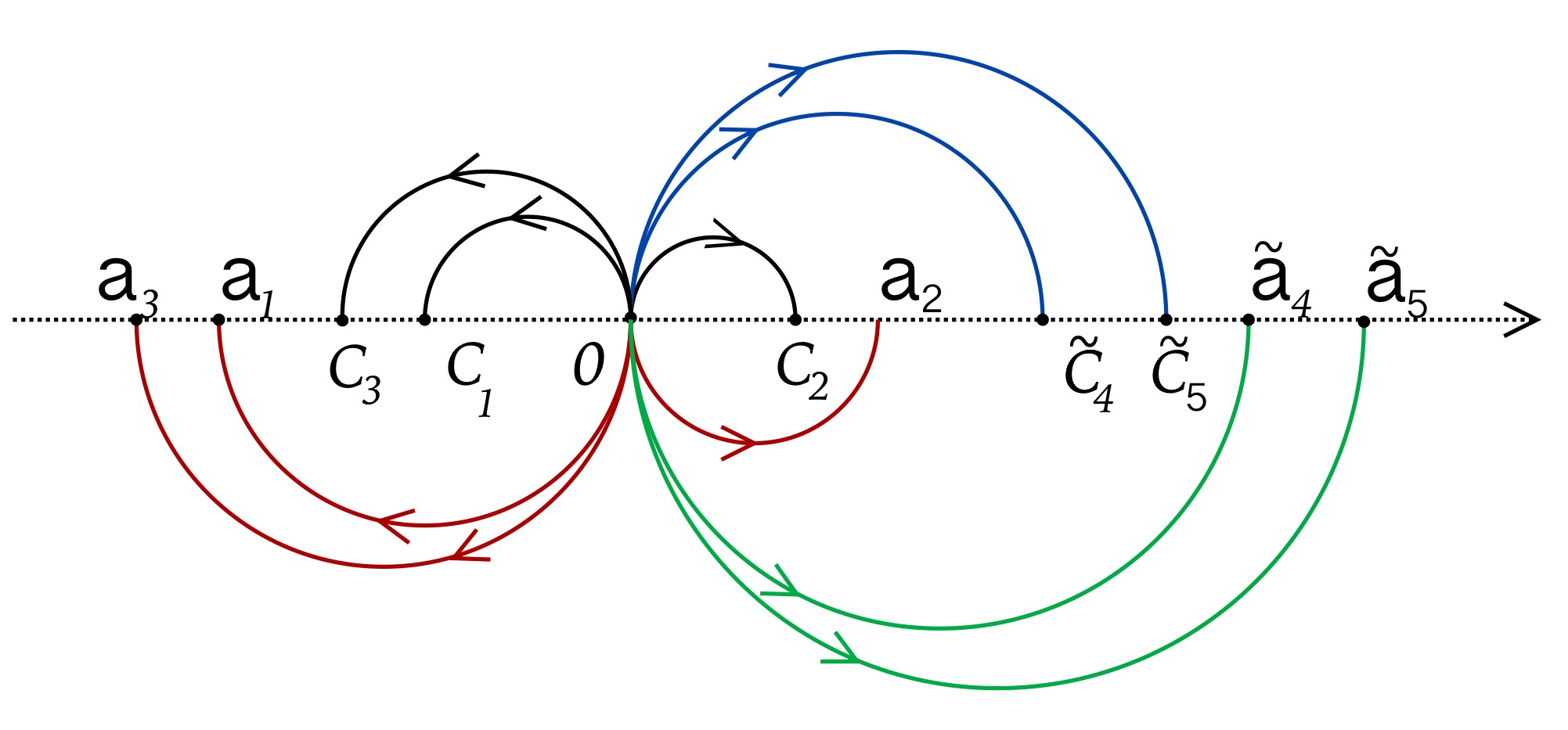}
  \caption{A distinguished system of paths where  $c_i\in C_1,a_i\in C_2$ and $\tilde{c}_i\in\tilde{C}_1,\tilde{a}_i\in\tilde{C}_2$}
 \label{11:04}
\end{figure}  
\begin{defi} The cycle
\[
\delta*\gamma\cong \delta*_{t_.}\gamma:=\cup_{s\in[0,1]} \delta_{t_s}\times\gamma _{t_s} \in H_1((f\circ F)^{-1}(0),\mathbb{Z})
\]
is called an oriented cycle. Note that its orientation changes when the direction of path $t_.$ is changed. The triple 
$(t_s,\delta,\gamma)=(t_s,\delta_{t_.},\gamma_{t_.})$ is called an admissible triple.
\end{defi}
\begin{figure}[h]
  \centering
  \includegraphics[width=3 in]{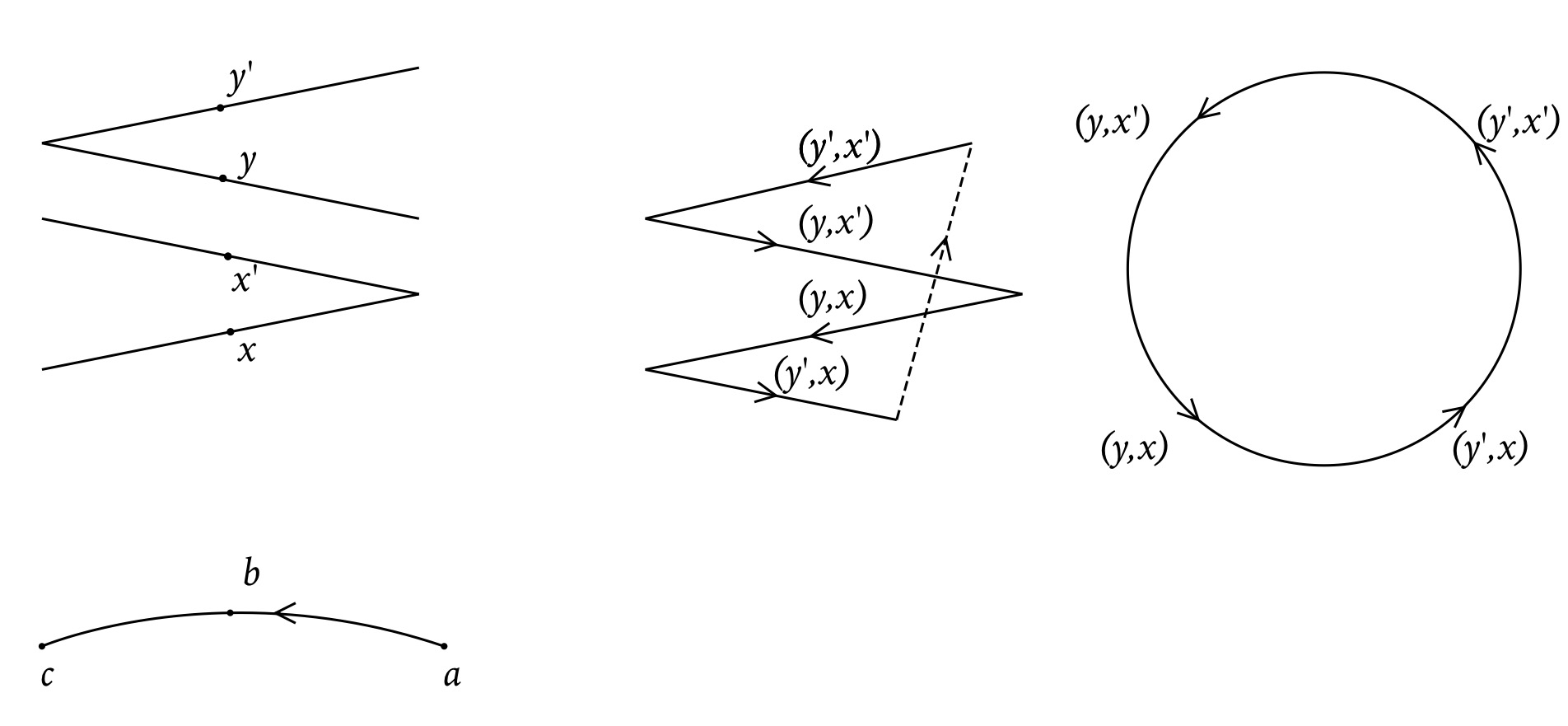}
  \caption{Joining of vanishing cycles }
 \label{11:04}
\end{figure}
Let
\[\delta_{c_i}^j\ ,\ \delta_{\tilde{c}_{a+k}}\in H_0((g\circ R)^{-1}(b),\mathbb{Z}) \ \ where  \ \ i=1,\dots,a \ , \ j=1,\dots,n\ ,\ k=1\dots,n-1,
\] 
and 
\[\gamma_{c_i}^j\ ,\ \gamma_{\tilde{c}_{a+k}}\in H_0((h\circ S)^{-1}(b),\mathbb{Z}) \ \ where  \ \ i=1,\dots,a \ , \ j=1,\dots,n\ ,\ k=1\dots,n-1,
\] 
be the corresponding distinguished basis of vanishing cycles.

\begin{theo}\label{15:39}
 The $\mathbb{Z}$-module $H_1((f\circ F)^{-1}(0),\mathbb{Z})$ is free and is generated by 
 $$
\alpha:=\delta*\gamma \ \ \ s.t \ \ \ \delta\in H_0((g\circ R)^{-1}(b),\mathbb{Z}) \ \ , \ \ 
\gamma\in H_0((h\circ S)^{-1}(b),\mathbb{Z}), 
 $$
 and where we have taken the admissible triples 
 $$
 (\lambda_c.\lambda_a^{-1}, \delta,\gamma) \ here \ c\in C_1\cup\tilde{C}_1 \ \ and \ \ a\in C_2\cup\tilde{C}_2. 
 $$
\end{theo}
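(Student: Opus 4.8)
The plan is to realize $H_1((f\circ F)^{-1}(0),\mathbb{Z})$ as the homology of the regular fiber of a \emph{direct sum} (Thom--Sebastiani type) construction: since $f\circ F = (g\circ R)(x) + (h\circ S)(y)$, the fiber $(f\circ F)^{-1}(0)$ is built from the product fibers $(g\circ R)^{-1}(c)\times (h\circ S)^{-1}(-c)$ as $c$ varies along the real line, and the topology of the total fiber is governed by the Sebastiani--Thom theorem. Concretely, I would first recall the classical fact that for two polynomials in separate variables $u(x)$, $v(y)$, the Milnor/homology module of $u+v$ is the tensor product $H_0(u^{-1}(0))\otimes H_0(v^{-1}(0))$, and that a distinguished basis of the join is obtained by joining distinguished bases of the two factors along admissible triples $(\lambda_c\cdot\lambda_a^{-1},\delta,\gamma)$, exactly as in the definition of $\delta*\gamma$ given above. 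This is the content of, e.g., \cite{1} (the join construction for direct sums of singularities); I would cite it and adapt the statement to the global polynomial setting, which is legitimate because $g\circ R$ and $h\circ S$ are Morse with disjoint sets of critical values by construction.

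The key steps, in order, are: (1) set up the one-parameter family of product fibers over the real axis and identify $(f\circ F)^{-1}(0)$ up to homotopy with the union of cones on these products, so that Mayer--Vietoris / the Leray spectral sequence of the family collapses; (2) invoke the Künneth formula fiberwise, using that $H_*((g\circ R)^{-1}(b))$ and $H_*((h\circ S)^{-1}(b))$ are concentrated in degree $0$ and free (Theorem \ref{7:07} gives the explicit bases), so the only nontrivial group of the product in the relevant degree is $H_0\otimes H_0$ sitting in $H_1$ of the join; (3) check that the cycles $\delta*\gamma$ are exactly the images of $\delta\otimes\gamma$ under the join isomorphism, hence they span; (4) verify freeness and the rank count $\mu = \mu(g\circ R)\cdot\mu(h\circ S) = (an+n-1)^2$, which matches the Milnor number of $f\circ F$ as a polynomial transversal to infinity, confirming that we have a genuine $\mathbb{Z}$-basis and not merely a generating set. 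For step (3) one uses that the cycle $\delta$ vanishes along $t_\cdot^{-1}$ as $s\to 0$ and $\gamma$ vanishes along $t_\cdot$ as $s\to 1$, so $\bigcup_s \delta_{t_s}\times\gamma_{t_s}$ closes up to a genuine $1$-cycle in the total fiber; this is the geometric heart of the join construction.

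I expect the main obstacle to be step (1)--(3) made rigorous in the \emph{global} (non-local) setting: the Sebastiani--Thom theorem is classically stated for germs of isolated hypersurface singularities, whereas here $g\circ R$ and $h\circ S$ are polynomials with several critical values on $\mathbb{C}$. The fix is that each is Morse and transversal to infinity, so one can either (a) work locally near each critical value and glue using the distinguished system of paths — the disjointness hypothesis $(C_1\cup\tilde C_1)\cap(C_2\cup\tilde C_2)=\emptyset$ is what makes the admissible triples $\lambda_c\cdot\lambda_a^{-1}$ unambiguous — or (b) appeal to the compactification and a version of the Sebastiani--Thom theorem for tame polynomials. The bookkeeping to see that the joined cycles, as $(\delta,\gamma)$ ranges over the product of the two distinguished bases, are $\mathbb{Z}$-linearly independent is the part requiring genuine care; here one can compute the intersection form of the join in terms of the intersection forms of the factors (a standard formula: the intersection matrix of a direct sum is, up to sign, a tensor-type product of the factor matrices), use Theorem \ref{7:15} for the explicit factor matrices, and observe that the resulting matrix is unimodular, which simultaneously yields freeness, the basis property, and will be exactly the input needed for the period-matrix invertibility used in Theorem \ref{23:24}.
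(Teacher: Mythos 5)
Your overall strategy is the same one the paper relies on: Theorem \ref{15:39} is exactly the global Sebastiani--Thom / join-of-cycles statement for the direct sum $f\circ F=(g\circ R)(x)+(h\circ S)(y)$, and the paper gives no argument of its own but cites \cite{13}, Chapter 7 (see also \cite{1}). Your steps (1)--(3) --- fibering $(f\circ F)^{-1}(0)$ over the path $t_\cdot$ by the zero-dimensional product fibers $(g\circ R)^{-1}(t_s)\times (h\circ S)^{-1}(-t_s)$, observing that $\delta$ vanishes at one end and $\gamma$ at the other so that $\bigcup_s\delta_{t_s}\times\gamma_{t_s}$ closes up to a $1$-cycle, and letting $(\delta,\gamma)$ run over the distinguished bases of Theorem \ref{7:07} --- are precisely the content of that reference, and the hypotheses needed to run it globally (Morseness, transversality to infinity, $(C_1\cup\tilde{C}_1)\cap(C_2\cup\tilde{C}_2)=\emptyset$) are built into the choices (\ref{12:35}), (\ref{13:08}), (\ref{12:51}). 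So the generation claim is in order.

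The one step that would fail is your proposed certificate of independence and freeness: the intersection matrix of the cycles $\delta*\gamma$ is \emph{never} unimodular here. On $H_1$ of an affine curve the intersection form is skew-symmetric, so it is automatically singular whenever its size $(an+n-1)^2$ is odd (e.g.\ for every even $n$); more decisively, for a degree-$N$ fiber transversal to infinity (here $N=(a+1)n$) the form always has a kernel of rank $N-1$, generated by the loops around the points at infinity. Hence unimodularity can certify neither the basis property nor freeness, and it is also not the input to the period-matrix argument in Theorem \ref{23:24}, where the relevant matrix is $[\int_{\delta}\eta_{\beta}]$ rather than the intersection matrix. The repair is cheap and standard: freeness is automatic because the regular fiber of a polynomial transversal to infinity is homotopy equivalent to a wedge of $(N-1)^2$ circles (or simply because $H_1$ of an open Riemann surface is free); the join produces exactly $(N-1)\cdot(N-1)=(an+n-1)^2$ generators; and that many generators of a free $\mathbb{Z}$-module of the same rank are automatically a basis. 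With that substitution your argument coincides with the cited proof.
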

See e.g. \cite[Ch 7]{15}.\\
Take $h\circ S= b'-(h'\circ S')$, where $b'$ is a fixed complex number and $h'\circ S'$ is a 
perturbation of $h\circ S$. The set of critical values of $h'\circ S'$ is denoted by $(C_2'\cup \tilde{C}_2')$ 
and therefore the set of critical values of $h\circ S$ is $C_2\cup\tilde{C}_2=b'-(C_2'\cup \tilde{C}_2')$. We define 
$(f\circ F)(x,y):=g\circ R(x) +h'\circ S'(y)$.
Assume that $(C_1\cup\tilde{C}_1)\cap(C_2'\cup\tilde{C}_2')=\emptyset$, and since the set of critical values of $f\circ F$ is $(C_1\cup\tilde{C}_1)+(C_2'\cup\tilde{C}_2')$ then  $b'$ is a regular value of $f\circ F$. Let 
$(t_s,\delta,\gamma)$ be an admissible triple where $t_s$ starts from $c$ and ends at $b'-a'$ (here $c\in C_1\cup\tilde{C}_1$ and $a'\in C_2'\cup\tilde{C}_2'$). Therefore the path $t_.+a'$ starts from $c+a'$ and ends at $b'$. For instance, see Figure \ref{18:45}: 
\begin{figure}[h]
  \centering
  \includegraphics[width=3.6 in]{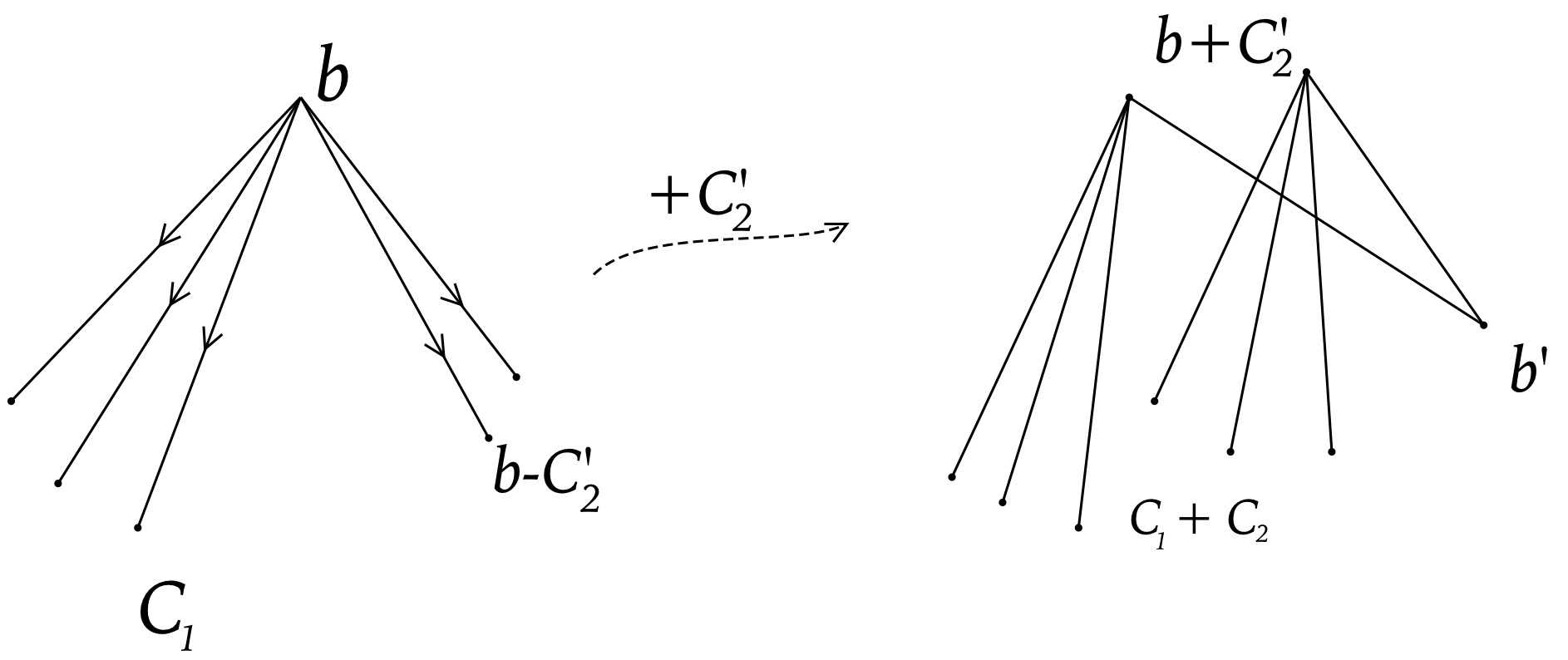}
  \caption{A distinguished system of paths }
 \label{18:45}
\end{figure}  
\begin{prop}
The topological cycle $\delta*\gamma$ is a vanishing cycle along the path $t_.+a'$ with respect to the fibration $f\circ F=t$.
\end{prop}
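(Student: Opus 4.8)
The statement asserts that the join cycle $\delta * \gamma$, constructed from a vanishing cycle $\delta \in H_0((g\circ R)^{-1}(0),\mathbb{Z})$ and a vanishing cycle $\gamma \in H_0((h'\circ S')^{-1}(0),\mathbb{Z})$ along the join path $t_.$, is itself a genuine vanishing cycle of the fibration $f\circ F = t$ along the path $t_. + a'$. The plan is to exhibit $\delta * \gamma$ as the result of the standard Morse-theoretic vanishing construction applied at a suitable critical point of $f\circ F$, thereby reducing everything to the Morse lemma in a neighborhood of that point together with the parallel-transport description of the join operation already developed in this section.

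The key steps, in order, are as follows. First I would identify the critical point of $f\circ F = g\circ R(x) + h'\circ S'(y)$ that produces the cycle: if $p$ is the critical point of $g\circ R$ whose critical value $c$ is the starting point of $t_.$ (so that $\delta$ vanishes at $p$ along the initial segment), and $q$ is the critical point of $h'\circ S'$ with critical value $a'$ (so that $\gamma$ vanishes at $q$), then $(p,q)$ is a critical point of $f\circ F$ with critical value $c + a'$; this is precisely the endpoint of the path $t_. + a'$ that is \emph{not} $b'$. Because all critical points here are Morse (conditions 3 in the setup), by the Morse lemma there are local coordinates $(u_1,u_2)$ near $(p,q)$ with $f\circ F = c + a' + u_1^2 + u_2^2$, so the local vanishing sphere $S_0$ at $(p,q)$ is a circle. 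Second, I would track this circle under parallel transport along $t_. + a'$ from a point near $b'$ back toward $c + a'$: since $f\circ F$ is a direct sum and the path $t_. + a'$ is, up to the shift by $a'$, the concatenation $\lambda_c \cdot \lambda_{a'}^{-1}$ (the join path defining the admissible triple), the two coordinates $u_1, u_2$ decouple. The first coordinate circle shrinks exactly as the dimension-zero cycle $\delta$ vanishes in the fibers of $g\circ R$ along $\lambda_c$, and the second as $\gamma$ vanishes in the fibers of $h'\circ S'$ along $\lambda_{a'}$; the product of the two $0$-cycle transports is, by the very definition $\delta * \gamma = \bigcup_{s\in[0,1]} \delta_{t_s}\times \gamma_{t_s}$, the join cycle. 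Third, I would invoke Theorem~\ref{15:39} to confirm that the class so obtained is one of the free generators of $H_1((f\circ F)^{-1}(b'),\mathbb{Z})$, hence in particular a nonzero, genuinely vanishing class, and check the orientation/sign conventions match those fixed for admissible triples.

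The main obstacle I anticipate is the careful bookkeeping in the second step: making rigorous the claim that parallel transport of the local vanishing circle along the \emph{concatenated} path $t_. + a'$ literally reproduces the combinatorially-defined join $\bigcup_s \delta_{t_s}\times\gamma_{t_s}$, rather than some cycle merely homologous to it. This requires knowing that near the half-way point $s = \tfrac12$ of $t_.$ (where $t_s = b$, a common regular value of $g\circ R$ and $h'\circ S'$) the product structure of the fiber $(f\circ F)^{-1}(b) \cong (g\circ R)^{-1}(b) \times (h'\circ S')^{-1}(b)$ is respected by the transport, and that the two one-parameter families of $0$-cycles glue across $s = \tfrac12$ to a single $S^1$ with no monodromy defect — i.e. that the construction in the definition of $\delta * \gamma$ is exactly the Lefschetz thimble boundary. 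Everything else (the Morse lemma, the decoupling of coordinates in a direct sum, the shift-by-$a'$ identification of paths, and the appeal to Theorem~\ref{15:39}) is routine once this identification is pinned down, so I would spend the bulk of the write-up there and otherwise cite the direct-sum formalism of (\cite{13}, Chapter 7).
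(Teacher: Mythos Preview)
The paper does not supply its own proof of this proposition: immediately after the statement it simply writes ``See e.g.\ (\cite{13} Chapter 7)'' and moves on. So there is nothing to compare your argument \emph{against} in the paper itself.

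That said, your sketch is the standard argument one finds in the cited source (and in Arnold--Gusein-Zade--Varchenko): locate the Morse critical point $(p,q)$ of the direct sum $g\circ R(x)+h'\circ S'(y)$ with value $c+a'$, apply the Morse lemma in separated coordinates $u_1^2+u_2^2$, and verify that the local vanishing $S^1$ transported along $t_.+a'$ agrees with the join $\bigcup_s \delta_{t_s}\times\gamma_{t_s}$. The one place you flag as an obstacle --- showing that the transported Lefschetz thimble boundary is literally the join cycle and not merely homologous to it --- is indeed the substantive step, and it is handled in \cite{13} (and \cite{1}) by exploiting the product structure of the fibration of a direct sum. Your plan is sound; for the paper's purposes the citation suffices.
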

 See e.g. \cite[Ch 7]{15}.
 \begin{defi}\hfill
\begin{enumerate}\setlength\itemsep{.1em}
    \item A vanishing cycle around the critical point $p$ where $p\in F^{-1}(Sing(f))$ is called a pull-back vanishing cycle.  
    \item A vanishing cycle around a tangency critical point is called a tangency vanishing cycle.
     \item A vanishing cycle around a critical point $p$ where $p\in V(R_x)\cap V(S_x)$, is called an exceptional vanishing cycle.
\end{enumerate}
\end{defi}

For simplicity we denote by $\delta_i^j$ the cycle $\delta_{c_i}^j$ where $c_i\in C_1$, $i=1,\dots ,a$ and $j=1,\dots,n$ (resp. by $\gamma_i^j$ the cycle $\gamma_{a_i}^j$ where $a_i\in C_1$, $i=1,\dots ,a$ and $j=1,\dots,n$). Also, we denote by $\delta_k$ the cycle $\delta_{\tilde{c}_k}$ where $k=a+1,\dots,a+(n-1)$ (resp. by $\gamma_k$ the cycle $\gamma_{\tilde{c}_k}$ where $k=a+1,\dots,a+(n-1)$).

\begin{theo}\label{18:12}
Let $b=0$ be the regular value of the function $f$. Let $\delta_i $ where $i=1,\dots,a$ be the distinguished set of vanishing cycles in $H_0(g^{-1}(b),\mathbb{Z})$ also let $\gamma_j$ where $j=1,\dots,a$ be the distinguished set of vanishing cycles in $H_0(h^{-1}(b),\mathbb{Z})$. Therefore the intersection matrix of $H_1(f^{-1}(b),\mathbb{Z})$ in the basis \[
\delta_i*\gamma_j \  \ \  where \  \ \ \ i,j=1,2,\dots,a,
\]
is of the form 
 \[
<\delta_i*\gamma_j\ , \ \delta_l*\gamma_k>=
\]
\[
		\begin{cases}
			(-1)^{a} & \text{if}
			 \begin{cases} 
			 (i=l,k=j+1,j=odd)\vee\\
			 (j=k,l=i+1,i=even)\vee\\
			 (i=odd,j=even ,l=i+1\ or\ l=i-1,k=j+1)
			 \end{cases} \\
			(-1)^{a+1} & \text{if}
			 \begin{cases} 
			 (i=l,k=j+1,j=even)\vee\\
			 (j=k,l=i+1,i=odd)\vee\\
			 (i=even,j=odd,l=i-1\ or\ i+1,k=l+1)
			 \end{cases}\\
            0 & otherwise. 			
		\end{cases}
\]
\end{theo}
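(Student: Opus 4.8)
The plan is to obtain the matrix from the Thom--Sebastiani (direct sum) structure of $f(x,y)=g(x)+h(y)$, reducing everything to the one--variable data for $g$ and $h$. The cycles $\delta_i*\gamma_j$ are join cycles built from the zero--dimensional vanishing cycles $\delta_i\in H_0(g^{-1}(0),\mathbb{Z})$ and $\gamma_j\in H_0(h^{-1}(0),\mathbb{Z})$, and they form a distinguished basis of $H_1(f^{-1}(0),\mathbb{Z})$ by (the $n=1$ specialization of) Theorem \ref{15:39}, see also \cite{13}, Chapter 7. Under this join construction the variation operator of $f$ is, up to an explicit sign depending only on the numbers of variables, the tensor product of the variation operators of $g$ and $h$; since the intersection form of a distinguished basis is recovered from its variation operator, the computation becomes a substitution of known data.

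First I would record that data. By Theorem \ref{5/8/16}, in the chosen distinguished basis the intersection form of $g$ is the $A_a$ form, $\langle\delta_i,\delta_i\rangle=2$, $\langle\delta_i,\delta_l\rangle=-1$ for $l=i\pm1$, and $0$ otherwise, and likewise for $h$ on $\{\gamma_j\}$; from the Picard--Lefschetz formula one then reads off that the variation operator of the $A_a$ chain is, in this basis, a nearly triangular operator whose entries are fixed by the order of the critical values along $\mathbb{R}$. A crucial bookkeeping point is that the distinguished paths for $g$ were chosen in the upper half plane and those for $h$ in the lower half plane; this asymmetry is exactly what makes the indices $i$ (from $g$) and $j$ (from $h$) enter the final table differently, and it also forces the ``diagonal'' Seifert contributions to cancel, leaving only the neighbouring ones.

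Next I would substitute this into the join intersection formula (\cite{13}, Chapter 7, or \cite{1}). Because each of $g,h$ contributes a tridiagonal form and a nearly triangular variation operator, only a few index patterns survive: \emph{(a)} $i=l$, where $\langle\delta_i,\delta_l\rangle=2$ and the value is governed by the Seifert pairing of $\gamma_j,\gamma_k$, which is nonzero only for $k=j+1$, with sign alternating in the parity of $j$; \emph{(b)} $j=k$, symmetric to (a) with the two factors exchanged, giving a contribution for $l=i+1$ with sign alternating in the parity of $i$; \emph{(c)} $l=i\pm1$ together with $k=j\pm1$, where two off--diagonal entries multiply and the combined sign depends jointly on the parities of $i$ and $j$; and \emph{(d)} all remaining patterns, where at least one factor of every term of the join formula vanishes, yielding $0$. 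Matching the surviving cases against the stated table is then a finite check.

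The genuine obstacle is the sign bookkeeping, in particular pinning down the global prefactor $(-1)^a$ and the precise parity conditions on $i$ and $j$. The factor $(-1)^a$ reflects the orientation of $\delta_i*\gamma_j$ as a circle together with the alternation of the critical values of $g$ along the real axis as one runs over all $a$ of them, while the parity conditions track which ``side'' of its two bounding roots a given zero--dimensional vanishing cycle lies on. I would fix these signs by carrying out the cases $a=2,3$ completely, where $H_1(f^{-1}(0),\mathbb{Z})$ has rank $4$ and $9$ and each join cycle can be oriented by hand from the local Morse models, and then propagate to general $a$ using the recursive structure of the $A_a$ Dynkin diagram and the transitivity of the monodromy action on vanishing cycles (Lemma \ref{23:45}). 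A useful internal check along the way is consistency with the skew--symmetry of the intersection pairing on $H_1$ of the smooth affine curve $f^{-1}(0)$, once the fixed ordering of the basis $\{\delta_i*\gamma_j\}$ is taken into account; this catches sign errors in the case analysis quickly.
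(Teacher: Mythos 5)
The paper offers no proof of Theorem \ref{18:12}: it simply refers to \cite{13}, Chapter 7, and to \cite{1}, and your proposal reconstructs essentially the argument those references contain, namely the Sebastiani--Thom/Gabrielov description of the Seifert (variation) form of the join $f=g(x)+h(y)$ as a signed tensor product of the one-variable Seifert forms, from which the skew-symmetric intersection form on $H_1(f^{-1}(0),\mathbb{Z})$ is recovered and evaluated on the $A_a$ data of Theorem \ref{5/8/16}. This is the right route, and your case analysis (a)--(d) is the correct shape of the computation. Two cautions on the details. First, the vanishing of the diagonal entries $\langle\delta_i*\gamma_j,\delta_i*\gamma_j\rangle$ is forced by the skew-symmetry of $V+(-1)^{1}V^{T}$ with $V$ unitriangular, not by the upper/lower half-plane asymmetry of the two path systems; that asymmetry only affects which off-diagonal entries of the Seifert matrices survive. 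Second, ``propagating the signs from $a=2,3$ to general $a$ by the transitivity of the monodromy action'' (Lemma \ref{23:45}) is not a valid mechanism: monodromy permutes vanishing cycles but does not respect the labelling of a distinguished basis, so it cannot transport intersection numbers of specific basis elements. This step is, however, redundant in your own plan, because the join intersection formula you invoke in the second paragraph is already uniform in $a$; the small cases are needed only to pin down the orientation conventions (and hence the global prefactor $(-1)^{a}$, which enters through the chosen ordering of the $a$ critical values and the orientation of the circles $\delta_i*\gamma_j$), after which the general statement is a direct substitution.
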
See e.g. \cite[Ch 7]{15}, and  \cite{1}.\\
The Dynkin diagram of $f$ when $a$ is even, is shown in Figure \ref{17:29.11}. 
\begin{figure}[htp]
  \centering
  \includegraphics[width=3in]{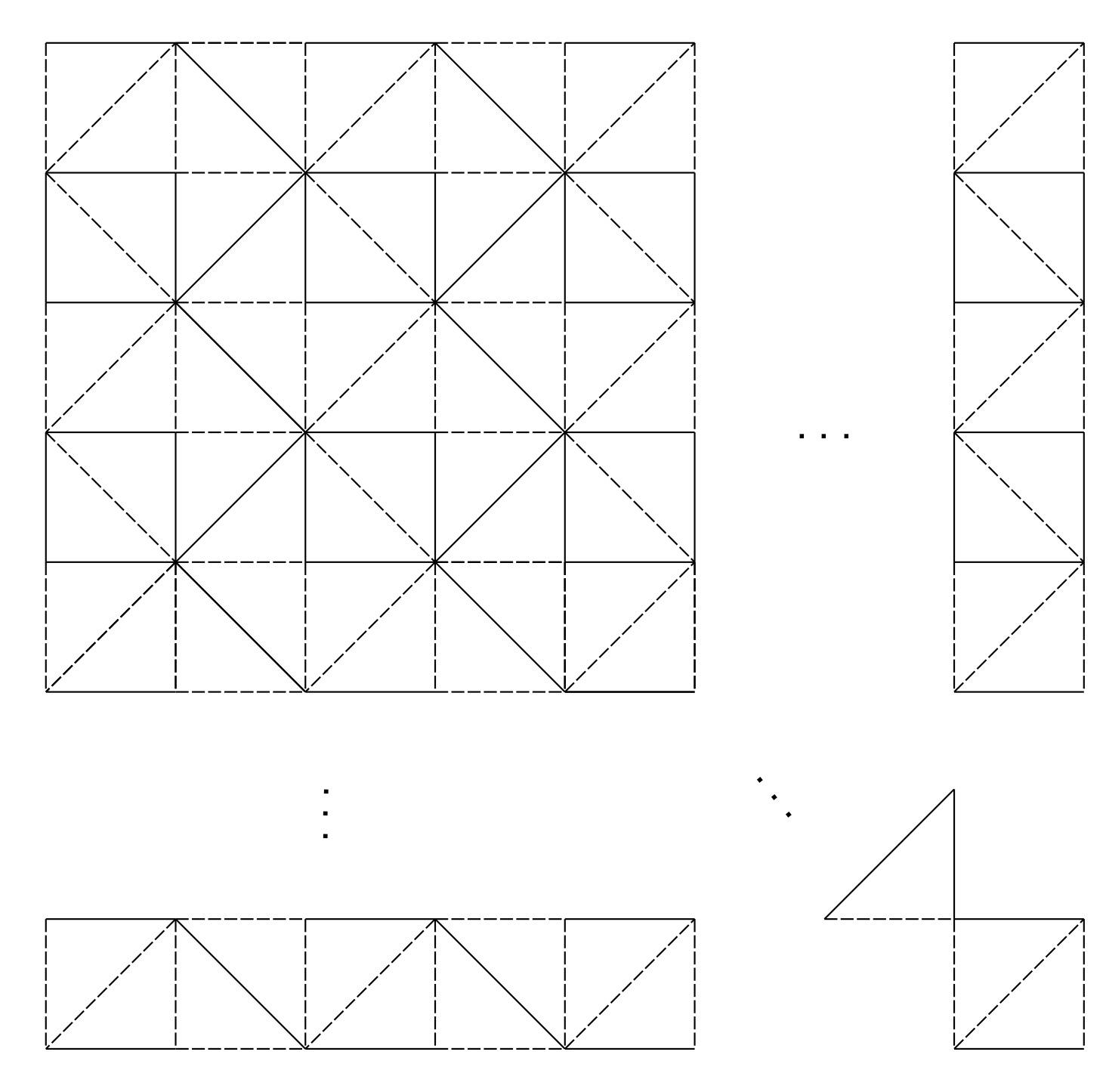}
  \caption{Dynkin diagram}
  \label{17:29.11}
 \end{figure} 
 
In Figure \ref{17:29.11}, according to the distinguished set of paths, the paths such as $t_{i,j}:=\lambda_j.\lambda_i^{-1}$  have transversal intersection with $t_{i,j+1}=\lambda_{j+1}.\lambda_{i}^{-1}$ (resp. $t_{i+1,j}:=\lambda_j.\lambda_{i+1}^{-1}$) at the point $b=0$, and $d(t_{ij})\wedge d(t_{i,j+1})=-d(t_{i,j+1})\wedge d(t_{i,j+2})$ (resp. $d(t_{ij})\wedge d(t_{i+1,j})=-d(t_{i+1,j})\wedge d(t_{i+2,j})$).

\begin{theo}\label{14:06}
For the regular value $b=0$ of $f\circ F=g(R)+h(S)$, we choose a distinguished set of vanishing cycles $\delta_i^j $ and $\ \delta_k$ where $i=1,2,\dots,a,j=1,\dots,n$ and $k=a+1,\dots,a+(n-1)$ (resp. $\gamma_i^j\ ,\ \gamma_k$ where $i=1,2,\dots,a,j=1,\dots,n$ and $k=a+1,\dots,a+(n-1)$) in $H_0((g\circ R)^{-1}(b),\mathbb{Z})$, (resp. $H_0((h\circ S)^{-1}(b),\mathbb{Z})$).  	
The intersection matrix in this basis 
$$
\delta_i^j*\gamma_{i'}^{j'} \ , \ \delta_k*\gamma_{i}^{j}\ , \ \delta_i^j*\gamma_k\ , \ \delta_k*\gamma_{k'} 
$$
for
$$
 i,i'=1,\dots,a \ , \ j,j'=1,\dots,n \ , \ k,k'=a+1,\dots,a+(n-1),
$$
of $H_1((f\circ F)^{-1}(0),\mathbb{Z})$ is given by
\[
<\delta_i^m*\gamma_j^s,\delta_l^{m'}*\gamma_k^{s'}>=<\delta_i^1*\gamma_j^1,\delta_l^1*\gamma_k^1>, \ \ \ for \ \ m=m'\ ,\ s=s' , 1\leq i,j,k,l\leq a,
\]
where \[<\delta_i^1*\gamma_j^1,\delta_l^1*\gamma_k^1>=(-1)^{n+1}<R(\delta_i^1)*S(\gamma_j^1),R(\delta_l^1)*S(\gamma_k^1)>.\]
Here, the intersection can be explained by using Theorem \ref{18:12} and
\[{
	\left\{
		\begin{array}{lll}
		<\delta_{a+i}^0*\gamma_j^1,\delta_{a+i}^0*\gamma_k^{s'}>=<\delta_a^s*\gamma_j^s,\delta_a^s*\gamma_k^{s'}>\\
		<\delta_{i}^m*\gamma_{a+j}^0,\delta_{l}^{m'}*\gamma_{a+j}^0>=<\delta_i^m*\gamma_a^m,\delta_l^{m'}*\gamma_a^{m}>,
		\end{array}
	\right.
}
\]
for the others we can use 
\[
<\delta_i^m*\gamma_a^s,\delta_i^m*\gamma_{a+s}^0>=<\delta_i^m*\gamma_{a+s}^0,\delta_{i}^{m}*\gamma_{a}^{s+1}>=
\]
\[
	\begin{cases}
		-1&\text{if }\begin{cases}  (n=2n'+1\ ,\ s=2t+1  )\vee \\(n=2n'\ ,\ a=2a'+1,\ s=2t+1)\end{cases}\\
	    1&\text{if } (n=2n'\ ,\ a=2a'+1\ ,\ s=2t+1 ),
		\end{cases}
\]
\[
<\delta_1^m*\gamma_j^s,\delta_{a+m}^0*\gamma_{j}^s>=<\delta_{a+m}^0*\gamma_{j}^s,\delta_1^{m+1}*\gamma_j^s>=
\]
\[
	\begin{cases}
		1&\text{if }\begin{cases}  (n=2n'+1\ ,\ m=2t+1  )\vee \\(n=2n'\ ,\ a=2a'+1,\ m=2t+1)\end{cases}\\
		-1&\text{if } (n=2n'\ ,\ a=2a'+1\ ,\ m=2t+1 ).
		\end{cases}
\]
\end{theo}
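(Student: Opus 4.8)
\emph{Proof plan for Theorem~\ref{14:06}.} The plan is to reduce the whole computation to the one-variable intersection data already recorded in Theorems~\ref{7:15} and \ref{18:12}, via the description of $H_1$ of the fibre of a direct sum as a space of joins. By Theorem~\ref{15:39}, $H_1((f\circ F)^{-1}(0),\mathbb Z)$ is free with basis the joins $\delta*\gamma$, where $\delta$ runs over the distinguished basis of $H_0((g\circ R)^{-1}(b),\mathbb Z)$ of Theorem~\ref{7:07} and $\gamma$ over the analogous basis of $H_0((h\circ S)^{-1}(b),\mathbb Z)$, the admissible triples being $(\lambda_c\cdot\lambda_a^{-1},\delta,\gamma)$ with $c\in C_1\cup\tilde C_1$ and $a\in C_2\cup\tilde C_2$ (Figure~\ref{18:45}). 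It therefore suffices to evaluate $\langle\delta*\gamma,\delta'*\gamma'\rangle$ for all such pairs, and the tool is the general formula for the intersection form of a join (\cite{13} Chapter~7, \cite{1}), which expresses $\langle\delta*\gamma,\delta'*\gamma'\rangle$ through $\langle\delta,\delta'\rangle$, $\langle\gamma,\gamma'\rangle$ and the relative nesting of the two defining paths $\lambda_c\lambda_a^{-1}$, $\lambda_{c'}\lambda_{a'}^{-1}$ in the $t$-plane.

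First I would split the distinguished basis of each zero-dimensional homology into its \emph{pull-back part} $\{\delta_i^j:1\le i\le a,\ 1\le j\le n\}$ and its \emph{tangency part} $\{\delta_k:a+1\le k\le a+n-1\}$, and treat the four resulting blocks separately. For the pull-back $\times$ pull-back block, the key point is that, as in the last paragraph of the proof of Theorem~\ref{7:15}, two pull-back cycles $\delta_i^j,\delta_l^{j'}$ with $j\ne j'$ are disjoint, while for $j=j'$ their intersection equals that of their $R_*$-images $\delta_{c_i},\delta_{c_l}$ in $H_0(g^{-1}(b),\mathbb Z)$; the same holds for the $\gamma$'s under $S_*$. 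Feeding this into the join formula gives $\langle\delta_i^m*\gamma_j^s,\delta_l^{m'}*\gamma_k^{s'}\rangle=0$ unless $m=m'$ and $s=s'$, and in that case the value is independent of the common sheet indices, hence equals $\langle\delta_i^1*\gamma_j^1,\delta_l^1*\gamma_k^1\rangle$. To identify the latter with $(-1)^{n+1}\langle R(\delta_i^1)*S(\gamma_j^1),R(\delta_l^1)*S(\gamma_k^1)\rangle$, i.e. with the intersection in $H_1(f^{-1}(b),\mathbb Z)$ computed in Theorem~\ref{18:12}, one pushes both cycles forward by $(R,S)$, which near the chosen sheet is a local diffeomorphism of the fibres; the global sign $(-1)^{n+1}$ is then pinned down by comparing the distinguished system $\{\lambda_c:c\in C_1\cup\tilde C_1\}$ for $g\circ R$ with the one used for $g$, recalling from Notation~\ref{12:38} that the order of $C_1$ inside $C_1\cup\tilde C_1$ is reversed exactly when $n$ is even, together with the orientation convention for the join.

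For the blocks involving tangency cycles I would argue by the same mechanism, now using the explicit one-variable numbers of Theorem~\ref{7:15}. Since the tangency cycle $\delta_{a+i}$ meets only the two neighbouring pull-back cycles $\delta_a^{2i-1},\delta_a^{2i}$ (for $n$ odd; $\delta_1^{2i},\delta_1^{2i+1}$ for $n$ even) with opposite signs, the join formula reduces $\langle\delta_{a+i}^0*\gamma_j^1,\delta_{a+i}^0*\gamma_k^{s'}\rangle$ to $\langle\delta_a^s*\gamma_j^s,\delta_a^s*\gamma_k^{s'}\rangle$, already covered by the pull-back block, and likewise for $\langle\delta_i^m*\gamma_{a+j}^0,\delta_l^{m'}*\gamma_{a+j}^0\rangle$. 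The remaining mixed entries $\langle\delta_i^m*\gamma_a^s,\delta_i^m*\gamma_{a+s}^0\rangle$ and $\langle\delta_1^m*\gamma_j^s,\delta_{a+m}^0*\gamma_j^s\rangle$ are read off from the local adjacency of a tangency vertex to its two neighbours in the Dynkin diagram of $g\circ R$ (Figure~\ref{17;21:21}) and the orientation of the corresponding path $\lambda_{\tilde c}\cdot\lambda_a^{-1}$; the alternation of $\pm1$, and its split into the cases $n=2n'+1$ versus $n=2n'$ with $a=2a'+1$, is exactly the manifestation of the two possible orderings of $C_1$. As in the proof of Theorem~\ref{7:15} I would first carry out the case $n$ odd and then indicate the changes forced by the order reversal of $C_1$ when $n$ is even.

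The main obstacle is the sign and orientation bookkeeping. The general join formula carries several correction terms depending on how the paths $\lambda_c\lambda_a^{-1}$ interleave in the $t$-plane, and for each pair of basis cycles one must decide which term is active; this is compounded by a parity case analysis in $n$ and $a$ and by the order reversal of $C_1$. The genuinely new content of the statement is precisely that all of these choices assemble into the uniform factor $(-1)^{n+1}$ on the pull-back block and the displayed $\pm1$ patterns on the tangency entries; once the local models and orientation conventions are fixed, everything else is a mechanical consequence of Theorems~\ref{15:39}, \ref{7:15} and \ref{18:12}.
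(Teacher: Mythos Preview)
The paper does not give its own proof of this theorem: immediately after the statement it simply writes ``See e.g.\ (\cite{13} Chapter~7)'', so there is nothing to compare your argument against beyond the citation. Your plan is exactly the intended route implicit in that reference: use the join description of $H_1$ of a direct sum (Theorem~\ref{15:39}) together with the general intersection formula for joins from \cite{13} Chapter~7 / \cite{1}, and feed in the one-variable intersection numbers already tabulated in Theorems~\ref{7:15} and~\ref{18:12}. In that sense your proposal is correct and aligned with the paper's (tacit) approach; you are simply spelling out what the citation leaves to the reader, including the parity bookkeeping responsible for the $(-1)^{n+1}$ and the displayed $\pm1$ patterns.
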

See e.g. \cite[Ch 7]{15}.\\
Since the dimension of $(f\circ F)^{-1}(b)$ is one, for the two vanishing cycles 
$\alpha,\beta\in H_1((f\circ F)^{-1}(b),\mathbb{Z})$, we have $<\alpha,\beta>=-<\beta,\alpha>$ and $<\alpha,\alpha>=0$, i.e. the intersection matrix is skew-symmetric.\\
The Dynkin diagram of $f\circ F$ when $n$ is odd and a=3 is shown in Figure \ref{Dyn}.
\begin{figure}[htp]
  \centering
  \includegraphics[width=3.5in]{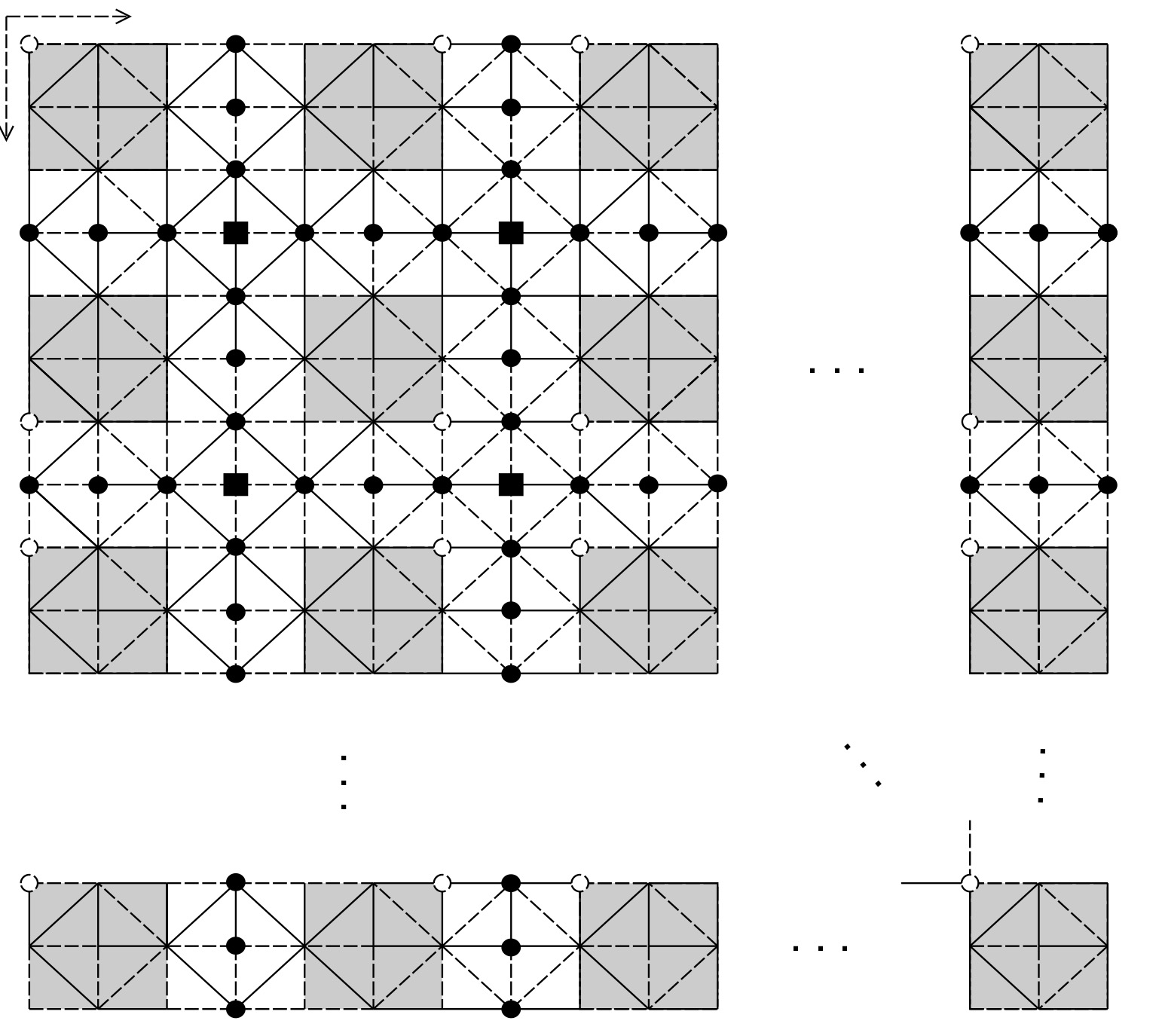}
  \caption{Dynkin diagram of $f\circ F$, when $n$ is odd and a=3 }
  \label{Dyn}
\end{figure} 
 
In Figure \ref{Dyn}, black vertices correspond to the tangency vanishing cycles, Squares vertices corresponding to the exceptional vanishing cycles and all the other vertices corresponding to pull-back vanishing cycles. The white cycle vertices correspond to some of the pull-back vanishing cycles with the same image under $F_*$. The direction of the intersections are to be considered from left to right and top to bottom in this figure.

\begin{defi}
An isomorphism of graphs $G$ and $H$ is a bijection between the vertex sets of $G$ and $H$
    $$f : V ( G )\rightarrow V ( H ), $$
such that any two vertices $u$ and $v$ are adjacent in $G$ if and only if $f(u)$ and $f(v)$ are adjacent in H.
\end{defi}
We denote by H (resp. G), the Dynkin diagram of $f\circ F$ (resp. $f$) with respect to the distinguished set of vanishing cycles related to the critical points of $f\circ F$ (resp. $f$). 
We consider the group generated by the action of the monodromy group $$\pi':=<\lambda_c|c\in C_1+C_2>\subset\pi_1(\mathbb{C}\setminus((C_1\cup C_2)+(\tilde{C}_1\cup\tilde{C}_2)),b),$$ on a pull-back vanishing cycle $\delta_c^i*\gamma_a^j$. We know that this group is generated by some pull-back vanishing cycles, hence it introduces a sub-graph of $H$ which is denoted by  $G_{ij}$.

For each $i,j=1,\dots,n$ the graph $G_{ij}$ is isomorphic to the graph $G$. Therefore if we remove the vertices corresponding to the tangency and exceptional vanishing cycles, then $H$ is divided into $n^2$ graphs $G_{ij}$.  
\begin{defi}
The cycle $\delta$ in a regular fiber $f^{-1}(b)$ is called simple if the homology group $H_1(f^{-1}(b),\mathbb{Z})$ is generated by the action of monodromy group $\pi_1(\mathbb{C} \setminus C,b)$ on $\delta$ (where $C$ is the set of critical values of $f$).
\end{defi}
\begin{theo}\label{14:37}
Each vanishing cycle (respective to the distinguished set of paths related to the critical values) in a regular fiber of $f$, is simple. 
\end{theo}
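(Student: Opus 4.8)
The plan is to deduce simplicity from the connectedness of the Dynkin diagram $G$ of $f$, together with the form the Picard--Lefschetz formula takes when the Milnor fibre is a curve. Recall $f(x,y)=g(x)+h(y)$ with $g,h$ Morse of degree $a+1$ and generic, so $f$ is a Morse polynomial whose $a^2$ critical values are the pairwise distinct sums $c_i+a_j$, $c_i\in C_1$, $a_j\in C_2$; by Theorem \ref{18:12} (the join description of $H_1(f^{-1}(b),\mathbb{Z})$) a distinguished basis of vanishing cycles is $\{\,\delta_i*\gamma_j\mid 1\le i,j\le a\,\}$, and $\delta_i*\gamma_j$ is precisely the vanishing cycle along the distinguished path ending at $c_i+a_j$. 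Hence every vanishing cycle attached to the distinguished system of paths is one of the $\delta_i*\gamma_j$, and it suffices to show each of these generates $H_1(f^{-1}(b),\mathbb{Z})$ under the monodromy action.

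Fix $v_0=\delta_{i_0}*\gamma_{j_0}$ and let $M\subseteq H_1(f^{-1}(b),\mathbb{Z})$ be the subgroup generated by the orbit of $v_0$ under $\pi_1(\mathbb{C}\setminus C,b)$. Since the monodromy group carries the orbit of $v_0$ into itself, $M$ is invariant under every $h_\lambda$. I would then induct along $G$, whose vertices are the $\delta_i*\gamma_j$: from the case analysis in Theorem \ref{18:12} one reads off $\langle\delta_i*\gamma_j,\delta_{i\pm1}*\gamma_j\rangle=\pm1$ and $\langle\delta_i*\gamma_j,\delta_i*\gamma_{j\pm1}\rangle=\pm1$, so $G$ contains the $a\times a$ nearest-neighbour grid and is in particular connected. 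For the inductive step, assume $w=\delta_{i'}*\gamma_{j'}\in M$ and let $v=\delta_i*\gamma_j$ be a neighbour of $w$ in $G$; if $\lambda$ is the simple loop around the critical value at which $v$ vanishes then, the fibre being one-dimensional, the Picard--Lefschetz formula reads $h_\lambda(w)=w+\langle w,v\rangle v$ with $\langle w,v\rangle=\pm1$, whence $v=\pm\bigl(h_\lambda(w)-w\bigr)\in M$. Starting from $v_0\in M$ and walking through the connected graph $G$, every $\delta_i*\gamma_j$ lands in $M$, so $M=H_1(f^{-1}(b),\mathbb{Z})$ and $v_0$ is simple.

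The content of the argument is concentrated in the connectedness of $G$ --- equivalently, in the fact that the nearest-neighbour entries of the intersection matrix of Theorem \ref{18:12} are all units --- so the real work is that (parity-sensitive) bookkeeping rather than anything deeper; the degenerate case $a=1$, where $H_1(f^{-1}(b),\mathbb{Z})\cong\mathbb{Z}$ is cyclic on the single vanishing cycle, should be recorded separately. The other point to check carefully is that $h_\lambda$ genuinely acts as the transvection along the single cycle $v$: this is where genericity of $g$ and $h$ is used, forcing $f$ to be Morse with pairwise distinct critical values, so that exactly one vanishing cycle is attached to each critical value and the Picard--Lefschetz sum collapses to one term. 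Everything past that is formal; I expect the grid-path induction to be the routine but central mechanism, with the only real care needed in matching signs and parities when reading off connectivity from Theorem \ref{18:12}.
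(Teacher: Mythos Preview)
Your argument is correct and is the standard proof via connectedness of the Dynkin diagram together with the Picard--Lefschetz transvection formula. Note, however, that the paper does not prove Theorem~\ref{14:37} at all: it simply refers the reader to \cite{14} (Movasati's logarithmic-foliations paper), so there is no ``paper's own proof'' to compare against beyond that citation. Your write-up essentially reconstructs the argument one finds there (and in the classical sources such as \cite{1}): genericity of $g,h$ makes the $a^2$ critical values of $f=g+h$ pairwise distinct, the join description of Theorem~\ref{18:12} shows the nearest-neighbour intersection numbers are units so the Dynkin diagram contains the $a\times a$ grid and is connected, and then the Picard--Lefschetz walk propagates membership in the monodromy-orbit span from any starting vertex to all of them.

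One cosmetic remark: with the sign convention stated in the paper, for $m=2$ the Picard--Lefschetz formula is $h_\lambda(w)=w-\langle w,v\rangle v$, not $+$; this does not affect your conclusion since $\langle w,v\rangle=\pm1$ and you only need $v\in M$. Otherwise the argument is complete, and your explicit treatment of the $a=1$ edge case and of the ``single vanishing cycle per critical value'' hypothesis is a welcome bit of care that the paper leaves implicit in its citation.
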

See e.g. \cite{16}.
\begin{theo}\label{asl}
 For the regular value $b=0$  of the function  $f\circ F=g\circ R+h\circ S$ (a composition of the functions which were defined in \eqref{12:35} and \eqref{12:51}), the action of the monodromy group on a tangency vanishing cycle generates
\[
\delta_{\tilde{c}}*\gamma_a \ \ , \ \ \delta_c*\gamma_{\tilde{a}} \ \ , \ \ \delta_{\tilde{c}}*\gamma_{\tilde{a}} \ \ and \ \  \delta_c^i*\gamma_a^j-\delta_c^{i'}*\gamma_a^{j'},
\]	
where 
$\tilde{c}\in\tilde{C}_1 \ , \ c\in C_1\ ,\ a\in C_2 , \ \tilde{a}\in\tilde{C}_2$ and $\ i,j,i',j'=1,2,\dots, n$.
\end{theo}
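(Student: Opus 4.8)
The plan is to lift the proof of Lemma~\ref{16:42} one dimension, exploiting the Thom--Sebastiani (direct sum) shape of $f\circ F=(g\circ R)(x)+(h\circ S)(y)$. First I would reformulate. Let $F_*\colon H_1((f\circ F)^{-1}(0),\mathbb{Z})\to H_1(f^{-1}(0),\mathbb{Z})$ be the induced map; since $F$ is a product map one has $F_*(\delta*\gamma)=R_*\delta*S_*\gamma$, so $F_*$ annihilates every tangency and every exceptional vanishing cycle and sends a pull-back cycle $\delta_c^i*\gamma_a^j$ to a class depending only on the critical values $c,a$. Hence $\ker F_*$ is exactly the $\mathbb{Z}$-module spanned by the four families in the statement, which is consistent with $\operatorname{rank}H_1((f\circ F)^{-1}(0))=(an+n-1)^2$ and $\operatorname{rank}H_1(f^{-1}(0))=a^2$. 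Because $F$ is a morphism of the two Lefschetz fibrations over their common base $\mathbb{C}$ and $\mathrm{Disc}(f)\subset\mathrm{Disc}(f\circ F)$, Gauss--Manin parallel transport is $F_*$-equivariant, so $\ker F_*$ is invariant under the monodromy group of $f\circ F$; since $F_*$ kills any tangency cycle, the module $N$ generated by the monodromy orbit of a tangency cycle satisfies $N\subseteq\ker F_*$ for free. The content of the theorem is the reverse inclusion $N\supseteq\ker F_*$.

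The engine is a ``slice principle''. By Theorem~\ref{14:06} the intersection form on $H_1((f\circ F)^{-1}(0))$ is assembled from the Seifert forms of the two one-variable factors; from this one checks that for each fixed vanishing cycle $\gamma_0$ of $h\circ S$ the submodule $\{\delta*\gamma_0\}$ is preserved by the subgroup of the monodromy group generated by the loops around the critical values attached to $g\circ R$, and that this subgroup acts on $\{\delta*\gamma_0\}$ exactly as the dimension-zero monodromy group of $g\circ R$ acts on $H_0((g\circ R)^{-1}(0))$, and symmetrically with $x$ and $y$ interchanged. Propagation in one coordinate direction is thereby reduced to the dimension-zero results already in hand: Lemma~\ref{16:42} (monodromy orbit of a tangency cycle) and Proposition~\ref{19:26} (monodromy orbit of a pull-back cycle is everything).

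Now I would generate in steps. Normalise $n$ odd and, using the $x\leftrightarrow y$ symmetry, take the tangency cycle to be $T_0=\delta_{\tilde c}*\gamma_{a_0}^1$ with $\delta_{\tilde c}$ a tangency cycle of $g\circ R$ and $\gamma_{a_0}^1$ a pull-back cycle of $h\circ S$. The $x$-slice principle applied to $T_0$ together with Lemma~\ref{16:42} puts $\delta_{\tilde c'}*\gamma_{a_0}^1$ (all $\tilde c'\in\tilde C_1$) and $\delta_c^i*\gamma_{a_0}^1-\delta_c^{i'}*\gamma_{a_0}^1$ (all $c\in C_1$, all $i,i'$) into $N$. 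Then the $y$-slice principle, applied to $\delta_{\tilde c'}*\gamma_{a_0}^1\in N$ and to $\delta_c^i*\gamma_{a_0}^1-\delta_c^{i'}*\gamma_{a_0}^1\in N$ — in each case the $y$-part being a pull-back cycle of $h\circ S$, so Proposition~\ref{19:26} applies — sweeps the $y$-factor over all of $H_0((h\circ S)^{-1}(0))$ and puts into $N$ all $\delta_{\tilde c}*\gamma_a^j$, all $\delta_{\tilde c}*\gamma_{\tilde a}$ (families one and three), all the $i$-superscript differences $\delta_c^i*\gamma_a^j-\delta_c^{i'}*\gamma_a^j$, and all $\delta_c^i*\gamma_{\tilde a}-\delta_c^{i'}*\gamma_{\tilde a}$.

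What the slice principle cannot reach — and what I expect to be the main obstacle — is twofold: (a) the ``diagonal'' pull-back differences $\delta_c^i*\gamma_a^j-\delta_c^{i'}*\gamma_a^{j'}$ in which the $y$-superscripts also differ, and (b) the individual tangency cycles $\delta_c^i*\gamma_{\tilde a}$, which so far are pinned down only modulo the differences above; both fail for the same structural reason, namely that a slice monodromy applied to a class in $\ker F_*$ whose $x$-part is a tangency cycle or an $x$-difference only ever reproduces differences. To reach (a) and (b) I would bring in the monodromies around the mixed critical values $c+a$ ($c\in C_1$, $a\in C_2$), whose action is the product of the Picard--Lefschetz reflections in the $n^2$ pull-back cycles $\delta_c^k*\gamma_a^l$, combined with the two facts recorded just before the statement — each pull-back sub-diagram $G_{kl}$ is isomorphic to the Dynkin diagram $G$ of $f$, and (Theorem~\ref{14:37}) every vanishing cycle of $f$ is simple. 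Transporting a difference already in $N$ through the $G_{kl}$'s and using the precise intersection numbers of Theorem~\ref{14:06} between the tangency vertices and the pull-back columns over the extreme critical values of $g$ (and of $h$) should yield both (a) and the individual $\delta_c^i*\gamma_{\tilde a}$, completing a generating set of $\ker F_*$. Together with $N\subseteq\ker F_*$ from the first paragraph this forces $N=\ker F_*$, which is Theorem~\ref{asl}. The residual work is the verification of those intersection numbers and the accompanying rank count, which I expect to be delicate but mechanical.
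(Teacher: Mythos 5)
Your setup --- $N\subseteq\ker F_*$ is free, the content is the reverse inclusion, and one should reduce to the one--variable results by exploiting the Thom--Sebastiani structure --- matches the architecture of the paper's proof, which likewise combines a slicing argument (there phrased as restricting $f\circ F$ to the lines of the critical curve $D=V(R_xS_y-R_yS_x)$ and invoking Proposition \ref{19:26} and Lemma \ref{16:42} for the one--variable functions on those lines) with a Picard--Lefschetz computation organised by the sub-diagrams $G_{ij}$. Your inventory of what the slices give (families one and three, and the $i$-differences $(\delta_c^i-\delta_c^{i'})*\gamma_a^j$) and of what they cannot give is also accurate. One caveat: the slice subgroup does not act ``exactly as'' the dimension-zero monodromy group --- the Picard--Lefschetz signs differ ($\langle\delta,\delta\rangle=2$ versus $\langle\delta,\delta\rangle=0$), so a reflection fixes rather than negates its own vanishing cycle --- but the generated submodules coincide, so this is cosmetic.

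The genuine gap is in your treatment of the residual classes (a) and (b), which is exactly where the theorem's content lies, and the specific move you describe would fail. Reflecting ``a difference already in $N$'' around a mixed critical value $c+a$ cannot produce anything new: since distinct sub-diagrams $G_{ij}$ are orthogonal and mutually isomorphic, the reflection of $(\delta_{c'}^i-\delta_{c'}^{i'})*\gamma_{a'}^j$ in the cycles $\delta_c^k*\gamma_a^l$ returns $m\,(\delta_c^i*\gamma_a^j-\delta_c^{i'}*\gamma_a^j)$ with the \emph{same} superscript pair, so you stay inside the span of the $i$-differences you already have (whose rank per pair $(c,a)$ is $n^2-n$, strictly less than the needed $n^2-1$). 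The move that actually works --- and is the paper's key computation --- is to reflect a \emph{tangency} cycle, already known to lie in $N$, around the adjacent pull-back critical value: by Theorem \ref{14:06}, $\lambda_{t,a}(\delta_t^i*\gamma_{a+j}^0)=\delta_t^i*\gamma_{a+j}^0-(\delta_t^i*\gamma_a^j-\delta_t^i*\gamma_a^{j+1})$, which delivers a $j$-difference with a fixed individual $i$; Theorem \ref{14:37} together with the isomorphism $G_{ij}\cong G$ then propagates these consecutive differences to all of family four. Likewise, the individual family-two cycles $\delta_c^i*\gamma_{\tilde a}$ cannot be reached by your mechanism (every reflection around $c+\tilde a$ applied to the classes you have produces only \emph{differences} of the $\delta_c^k*\gamma_{\tilde a}$); the paper obtains them from the restriction of $f\circ F$ to the lines of $D$, where a tangency cycle of $f\circ F$ becomes a \emph{pull-back} cycle of the one-variable restriction so that Proposition \ref{19:26} --- not Lemma \ref{16:42} --- applies and yields individual cycles, the horizontal and vertical lines being linked through the exceptional cycles at their intersection points. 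Without these two ingredients your argument establishes only a proper submodule of the asserted one.
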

\begin{proof}\label{16:10} 
\begin{enumerate}\setlength\itemsep{.1em}
\item Suppose that $L$ is a line  of $D$. Each vanishing cycle around a critical point in $D$ is tangency or exceptional. 
Consider $f\circ F|_L-l$, when restriction of the functions $g\circ R$ or $h\circ S$ under $L$, is the constant value $l$. If $L=\{y= cons\}$ (resp. $L=\{x=cons\}$), then the cycles around the critical point in $L$ correspond to     
zero vanishing cycle $g\circ R$ (resp. $h\circ S$). Furthermore, a tangency vanishing cycle corresponds to a pull-back (zero) vanishing cycle. Thus by Proposition \ref{19:26}, the action of monodromy group on that cycle generates all the other vanishing cycles. This implies that, this action on our tangency vanishing cycle will generate the tangency and exceptional vanishing cycles around the critical points in $L$. The algebraic set $D$ consists of $n$ lines parallel with $x,y$ axes, so $ V(R_x)\cap V(S_y)\neq \emptyset$. Since the critical points in two vertical lines of $D$ have different values except the point in the intersection, the action of monodromy on a tangency vanishing cycle around a point of these lines can generate all the other vanishing cycles of these lines. In other words, this action on a tangency vanishing cycle can generate all the tangency and exceptional vanishing cycles.
\item If we remove the vertices of $H$ which is the graph corresponded to the tangency and exceptional vanishing cycles, then the new graph contains $n^2$ sub-graphs $G_{i,j}$ where $1\leq i,j\leq n$. The tangency vertex $\delta_t^i*\gamma_{a+j}^0$ (resp. $\delta_{a+i}^0*\gamma_t^j$) connects the sub-graphs $G_{i,j}$ to $G_{i,j+1}$(resp.$G_{i,j}$ to $G_{i+1,j}$). In general $\delta_t^i*\gamma_{a+j}^0$ connects 
$\delta_t^i*\gamma_a^j$ to $\delta_t^i*\gamma_a^{j+1}$ when $j$ is odd and also, it connects $\delta_t^i*\gamma_1^j$ to $\delta_t^i*\gamma_1^{j+1}$ when $j$ is even. According to Picard-Lefschetz formula the action of monodromy $\lambda_{t,a}$ (resp. $\lambda_{t,1}$) around the critical point $t+a$ (resp. $t+1$) when $j$ is odd (resp. $j$ is even) on the tangency vanishing cycle $\delta_t^i*\gamma_{a+j}^0$ is as follows:
\begin{eqnarray*}
\lambda_{t,a}(\delta_t^i*\gamma_{a+j}^0)&=&\delta_t^i*\gamma_{a+j}^0-\sum_{\alpha\in F_*^{-1}(\delta_t*\gamma_a)}<\alpha,\delta_t^i*\gamma_{a+j}^0>\alpha\\
&=& \delta_t^i*\gamma_{a+j}^0-(\delta_t^i*\gamma_{a}^j-\delta_t^i*\gamma_{a}^{j+1})
\end{eqnarray*} 
when $j$ is even, the action of monodromy $\lambda_{t,1}$ on $\delta_t^i*\gamma_{a+j}^0$  generates $\delta_t^i*\gamma_{1}^j-\delta_t^i*\gamma_{1}^{j+1}$.
According to Theorem \ref{14:37}, the action of the monodromy group $\pi$ on $\delta_t^i*\gamma_{a}^j$ generates vanishing cycles corresponding to the vertices in $G_{i,j}$. Therefore the action of the monodromy group $\pi'$ on the tangency vanishing cycle   
$\delta_t^i*\gamma_{a+j}^0$ generates the whole vanishing cycle $\delta_l^i*\gamma_{k}^j-\delta_l^i*\gamma_{k}^{j+1}$ where $1\leq l, k\leq a$. 
By the same process the action of the monodromy group $\pi$ on the tangency vanishing cycle   
$\delta_{a+i}^0*\gamma_t^j$ generates the whole vanishing cycle $\delta_l^i*\gamma_{k}^j-\delta_l^{i+1}*\gamma_{k}^{j}$ where $1\leq l, k\leq a $.
\end{enumerate}    
In general we can generate  \[\bigtriangledown:=\{\delta_c^{i+1}*\gamma_a^j-\delta_c^i*\gamma_a^j , \delta_c^i*\gamma_a^{j+1}-\delta_c^i*\gamma_a^j|i,j=1,\dots,n-1,c\in C_1,a\in C_2\}\] which is a basis for the group generated by 
\[
\delta_c^i*\gamma_a^j-\delta_c^{i'}*\gamma_a^{j'}\ \ \forall c\in C_1,\forall a\in C_2,\ \ i,j,i',j'=1,\dots,n.
\]  
\end{proof}

\begin{coro}\label{13:48;1}
The group which is generated by the action of the monodromy group on a tangency vanishing cycle can also be generated by 
\[
\delta_{\tilde{c}}*\gamma_a \ \ , \ \ \delta_c*\gamma_{\tilde{a}} \ \ , \ \ \delta_{\tilde{c}}*\gamma_{\tilde{a}} \ \ and \ \  \delta_c^i*\gamma_a^j-\delta_c^{i'}*\gamma_a^{j'}
\]	
where 
$\tilde{c}\in\tilde{C}_1 \ , \ c\in C_1\ ,\ a\in C_2 , \ \tilde{a}\in\tilde{C}_2$ and $\ i,j,i',j'=1,2,\dots, n$.
\end{coro}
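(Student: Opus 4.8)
The $\supseteq$ inclusion is already in hand, since Theorem~\ref{asl} asserts precisely that the group $\Gamma$ generated by the monodromy orbit of a tangency vanishing cycle contains each of the cycles $\delta_{\tilde c}*\gamma_a$, $\delta_c*\gamma_{\tilde a}$, $\delta_{\tilde c}*\gamma_{\tilde a}$ and $\delta_c^i*\gamma_a^j-\delta_c^{i'}*\gamma_a^{j'}$; write $\Lambda$ for the group they generate, so $\Lambda\subseteq\Gamma$. The plan is to prove the reverse inclusion $\Gamma\subseteq\Lambda$, and the cleanest route I see is to identify both groups with $\ker F_*$, where $F_*\colon H_1((f\circ F)^{-1}(b),\mathbb{Z})\to H_1(f^{-1}(b),\mathbb{Z})$ is the map induced by $F=(R,S)$ on a regular fiber.

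The first step is to compute $\ker F_*$ explicitly and to check that $\ker F_*=\Lambda$. Using the join basis of $H_1((f\circ F)^{-1}(b),\mathbb{Z})$ from Theorem~\ref{15:39} together with the product rule $F_*(\delta*\gamma)=R_*(\delta)*S_*(\gamma)$, one records that $R_*$ annihilates every tangency cycle $\delta_{\tilde c}$ of $g\circ R$, while it sends each pull-back cycle $\delta_c^i$ to the same vanishing cycle $\delta_c\in H_0(g^{-1}(b),\mathbb{Z})$, independently of $i$ (this is how the $\delta_c^i$ were defined, see around Theorem~\ref{7:07}), and symmetrically for $S_*$ and $h\circ S$. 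Hence $F_*$ kills the three families $\delta_{\tilde c}*\gamma_a$, $\delta_c*\gamma_{\tilde a}$, $\delta_{\tilde c}*\gamma_{\tilde a}$ of tangency and exceptional joins, and collapses $\delta_c^i*\gamma_a^j$ onto a single basis vector $\delta_c*\gamma_a$ of $H_1(f^{-1}(b),\mathbb{Z})$ (Theorem~\ref{18:12}); in particular $F_*$ is onto, so $\operatorname{rank}F_*=a^2$. A rank-and-saturation count then closes the step: the difference cycles generate, inside the coordinate sublattice spanned by the pull-back joins, the saturated ``sum-zero'' sublattice of corank $a^2$, and the tangency/exceptional joins span a complementary coordinate sublattice, so $\Lambda$ is a saturated sublattice of $\ker F_*$ of the same rank, whence $\Lambda=\ker F_*$. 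This bookkeeping with the explicit bases of Theorems~\ref{15:39}, \ref{14:06}, \ref{18:12} is the technical heart of the argument and the step I expect to be the main obstacle: the delicate point is to pin down $\ker F_*$ exactly, both its rank and its saturation, rather than merely up to finite index.

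It remains to show $\Gamma\subseteq\ker F_*$. A tangency vanishing cycle $\delta_t$ of $f\circ F$ has $F_*(\delta_t)=0$, because one of the two factors of the join $\delta_t$ is a tangency cycle of $g\circ R$ or of $h\circ S$ and $R_*$, resp.\ $S_*$, sends such a cycle to $0$; thus $\delta_t\in\ker F_*$, and it suffices to see that $\ker F_*$ is invariant under the full monodromy group of $f\circ F$. For a simple loop $\lambda$ around a critical value of $f$, i.e.\ a value in $C_1+C_2$, the morphism $F$ intertwines the monodromies upstairs and downstairs, $F_*\circ h_\lambda=h_\lambda\circ F_*$, so $h_\lambda$ preserves $\ker F_*$. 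For a simple loop around any of the remaining critical values of $f\circ F$ — those in $\tilde C_1+C_2$, $C_1+\tilde C_2$ or $\tilde C_1+\tilde C_2$, which are regular values of $f$ — the Picard--Lefschetz formula gives $h_\lambda(\xi)=\xi\pm\langle\xi,e\rangle\,e$ with $e$ a tangency or exceptional vanishing cycle, and $F_*(e)=0$ forces $F_*(h_\lambda(\xi))=F_*(\xi)$, so $h_\lambda$ again preserves $\ker F_*$. Therefore the whole monodromy orbit of $\delta_t$, and with it $\Gamma$, lies in $\ker F_*=\Lambda$; combined with $\Lambda\subseteq\Gamma$ this yields $\Gamma=\Lambda$, which is the assertion of the corollary (and, incidentally, identifies this group with $\ker F_*$, as needed later for Theorem~\ref{1:29:1}).
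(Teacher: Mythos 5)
Your proposal is correct, but it takes a genuinely different route from the paper. The paper states this corollary with no proof at all: it is meant to be read off from Theorem~\ref{asl}, whose proof is intended to show not only that the monodromy orbit of a tangency cycle generates the listed elements (the inclusion $\Lambda\subseteq\Gamma$ you take as given) but also, implicitly, that every cycle produced by the Picard--Lefschetz iterations stays inside the span $\Lambda$ of those elements. You instead supply the reverse inclusion $\Gamma\subseteq\Lambda$ structurally, by identifying $\Lambda$ with $\ker F_*$ (via the observation that $F_*$ sends every $\delta_c^i*\gamma_a^j$ to the single basis vector $\delta_c*\gamma_a$ and annihilates the tangency and exceptional joins, so that $\ker F_*$ is exactly the sum-zero sublattices over each $(c,a)$-block plus the tangency/exceptional block) and by noting that $\ker F_*$ is monodromy-invariant and contains $\delta_t$. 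This is sound --- indeed the kernel computation can be done directly on the basis of Theorem~\ref{15:39}, so your rank-and-saturation count, while correct, is more machinery than is needed. The trade-off is worth flagging: your argument absorbs the content of Theorem~\ref{18:00} (equivalently Theorem~\ref{1:29:1}) into the corollary, whereas the paper runs the logic the other way, using the corollary plus the dimension count $(na+n-1)^2-a^2=a^2(n^2-1)+2na(n-1)+(n-1)^2$ to \emph{deduce} Theorem~\ref{18:00}. There is no circularity in your version, since you never invoke Theorem~\ref{18:00}, and your route has the advantage of not depending on re-auditing the explicit Picard--Lefschetz computations in the proof of Theorem~\ref{asl}; it does, however, rely on the product rule $F_*(\delta*\gamma)=R_*(\delta)*S_*(\gamma)$ for joins and on the intertwining $F_*\circ h_\lambda=h_\lambda\circ F_*$, both of which are true for the coordinate-wise map $F=(R,S)$ but which you should state as explicit lemmas rather than leave implicit.
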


\begin{theo}\label{18:00}
Let $F:\mathbb{C}^2\rightarrow\mathbb{C}^2$ be defined by $(x,y)\mapsto(R,S)$ as in \eqref{12:35} and let $f$ be a polynomial as in \eqref{12:51}. The linear map 
$$
F_*:H_1((f\circ F)^{-1}(b),\mathbb{Z})\rightarrow H_1((f)^{-1}(b),\mathbb{Z})
$$ is surjective, and $ker(F_*)$ is generated by 
the action of monodromy group $\pi_1(\mathbb{C}\setminus (C\cup\tilde{C}),b)$ on a tangency vanishing  cycle $\delta$.
\end{theo}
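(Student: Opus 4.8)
The plan is to deduce Theorem \ref{18:00} from the join description of the homology of a direct sum (Theorem \ref{15:39}), together with the monodromy computation already carried out in Theorem \ref{asl}. Write $F=R\times S$, so that $f\circ F$ is the direct sum $g(R(x))+h(S(y))$ lying over $f=g(x)+h(y)$. First I would record how $F_*$ acts on the join basis: since $F=R\times S$ and both the source function $f\circ F$ and the target function $f$ are direct sums, the join construction is natural, i.e. $F_*(\delta*\gamma)=(R_*\delta)*(S_*\gamma)$ in $H_1(f^{-1}(b),\mathbb{Z})$ for $\delta\in H_0((g\circ R)^{-1}(b),\mathbb{Z})$, $\gamma\in H_0((h\circ S)^{-1}(b),\mathbb{Z})$. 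Now feed in the basis of $H_1((f\circ F)^{-1}(b),\mathbb{Z})$ from Theorem \ref{15:39}, whose elements are joins of the zero-cycles of Theorem \ref{7:07}. Because $R_*(\delta_c^i)=\gamma_c$ is independent of $i$ and $R_*(\delta_{\tilde c})=0$, and likewise for $S$, one gets $F_*(\delta_{c_i}^j*\gamma_{a_{i'}}^{j'})=\gamma_{c_i}*\gamma_{a_{i'}}$ for all $j,j'$, while $F_*$ annihilates every join containing a factor $\delta_{\tilde c}$ or $\gamma_{\tilde a}$. Since the $\gamma_{c_i}*\gamma_{a_{i'}}$ with $1\le i,i'\le a$ form a basis of $H_1(f^{-1}(b),\mathbb{Z})$ (Theorem \ref{18:12}), this shows at once that $F_*$ is surjective; and reading off the kernel on the basis shows that $\ker F_*$ is the free $\mathbb{Z}$-module spanned by the joins $\delta_{\tilde c}*\gamma_a$, $\delta_c*\gamma_{\tilde a}$, $\delta_{\tilde c}*\gamma_{\tilde a}$ together with the differences $\delta_c^i*\gamma_a^j-\delta_c^{i'}*\gamma_a^{j'}$ taken within a fixed block $(c,a)$, $c\in C_1$, $a\in C_2$.

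Next let $N$ be the subgroup of $H_1((f\circ F)^{-1}(b),\mathbb{Z})$ generated by the $\pi_1(\mathbb{C}\setminus(C\cup\tilde C),b)$-orbit of a tangency vanishing cycle $\delta$, where $C\cup\tilde C$ denotes the critical values of $f\circ F$ and $C=C_1+C_2$ those of $f$. For $N\subseteq\ker F_*$ I would argue by equivariance: since $C\subseteq C\cup\tilde C$, a loop $\lambda$ around the critical values of $f\circ F$ maps to a loop $\iota(\lambda)$ around those of $f$, and as $F$ sends $(f\circ F)^{-1}(t)$ into $f^{-1}(t)$ compatibly with deformation of $t$, we get $F_*\circ h_\lambda=h_{\iota(\lambda)}\circ F_*$; a tangency cycle lies in $\ker F_*$ by the previous paragraph, so $\ker F_*$ is a $\pi_1(\mathbb{C}\setminus(C\cup\tilde C),b)$-invariant subgroup containing $\delta$, whence $N\subseteq\ker F_*$. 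For the reverse inclusion, Theorem \ref{asl} already states that the orbit of $\delta$ generates all the joins $\delta_{\tilde c}*\gamma_a$, $\delta_c*\gamma_{\tilde a}$, $\delta_{\tilde c}*\gamma_{\tilde a}$ and all the within-block differences $\delta_c^i*\gamma_a^j-\delta_c^{i'}*\gamma_a^{j'}$, which is precisely the spanning set of $\ker F_*$ found above; hence $\ker F_*\subseteq N$. Combining the two inclusions gives $\ker F_*=N$, which is the assertion, and as a byproduct this re-derives Corollary \ref{13:48;1}.

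The real content of the statement — that the monodromy orbit of a single tangency cycle sweeps out all of the ``extra'' cycles killed by $F_*$ — is already isolated in Theorem \ref{asl}, whose proof rests on the connectedness of the Dynkin diagram of $f\circ F$ and on the simplicity of the vanishing cycles of $f$; granting that, Theorem \ref{18:00} is essentially a bookkeeping corollary. The one step that is not purely formal is the naturality identity $F_*(\delta*\gamma)=(R_*\delta)*(S_*\gamma)$ and the identification of its right-hand side with the appropriate vanishing cycle of $f$ along the corresponding path; this must be checked against the concrete distinguished systems of paths used throughout Section 4 (upper half-plane for $g\circ R$, lower half-plane for $h\circ S$, and their sums), and it is the place where I expect the proof to require genuine care rather than routine manipulation.
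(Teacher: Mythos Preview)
Your argument is correct in outline and reaches the conclusion, but it is organized differently from the paper's proof. The paper does not invoke a naturality identity $F_*(\delta*\gamma)=(R_*\delta)*(S_*\gamma)$ nor read the kernel off the join basis. Instead it argues geometrically that each tangency cycle is cut by $D$ into two arcs with homotopic images under $F$, and each exceptional cycle into four such arcs, hence both types lie in $\ker F_*$; together with the obvious relations $\delta_c^i-\delta_c^j\in\ker F_*$ and Corollary \ref{13:48;1} this gives $N\subseteq\ker F_*$. Equality is then obtained by a rank count: $\mathrm{null}(F_*)=(na+n-1)^2-a^2$, while the independent generators produced by Theorem \ref{asl} number $a^2(n^2-1)$ (the within-block differences) plus $2na(n-1)+(n-1)^2$ (the tangency and exceptional cycles), and these totals agree. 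Your route avoids the numerics entirely by determining $\ker F_*$ explicitly on the basis, which is cleaner; the price is the naturality step you flag at the end. Note that this identity is only literally meaningful when both factors are pull-back zero-cycles (so that the path $t_s$ joins a value in $C_1$ to one in $C_2$, which are critical for $f$ as well as for $f\circ F$); when a factor is $\delta_{\tilde c}$ or $\gamma_{\tilde a}$ the right-hand side has no admissible triple over $f$, and you must argue directly, as the paper does, that $F$ collapses such joins to null-homologous cycles. With that adjustment your proof stands.
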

\begin{proof}
It is clear that for each $c\in C$ the cycles $\delta_c^i-\delta_c^j$ belongs to $ker(F_*)$ where $\delta_c^i,\delta_c^j$ are the pull-back vanishing cycles around the singularities with the value $c$. Each tangency vanishing cycle $\delta_t$ is divided into two paths with homotopic images under $F$ by $D$. Thus, $\delta_t\in ker(F_*)$. Each exceptional vanishing cycle $\delta_e$ is divided into 4 paths by $D$. The images of those 4 paths under $F$ are homotopic, hence $F_*(\delta_e)=0$. By using the Corollary \ref{13:48;1} we conclude that $<\pi_1(\mathbb{C}\setminus(C\cup\tilde{C}),b).\delta_t>\subset ker(F_*)$.
It is obvious that the morphism $F_*$ is surjective and so  
\begin{equation}\label{2:22}
\begin{split}
null(F_*)&=\#(V((g\circ R)_x)\cap V((h\circ S)_y))
-\#(V(g_x)\cap V(h_y))\\
&=(na+n-1)^2-a^2.
\end{split}
\end{equation}
If  $F_*^{-1}(\gamma_c)=\{\delta_c^i|i=1,\dots , n^2\}$, where $\gamma_c\in H_1(f^{-1}(b),\mathbb{Z})$, and since all the elements of the set $\triangle_c=:\{\delta_c^{i+1}-\delta_c^{i}|i=1,\dots ,n^2-1\}$ are independent, then its elements can generate all $\delta_c^i-\delta_c^j$ for all $i,j=1,\dots ,n^2$. The number of all the elements of the set $\cup_{c\in C}\triangle_c$ is $a^2(n^2-1)$. The tangency and exceptional vanishing cycles which are in $ker(F_*)$, are independent elements and their number is $2na(n-1)+(n-1)^2$. Therefore by considering the equality   
(\ref{2:22}) we have $<\pi_1(\mathbb{C}\setminus(C\cup\tilde{C}),b).\delta_t>= ker(F_*)$.
\end{proof}  

\textbf{Acknowledgment:} I would like to express my gratitude to my advisor professor Hossein Movasati whose contributions, inspirational suggestions and encouragement have helped me to coordinate my Ph.D. thesis especially in regards to writing this paper.


\smallskip
\leftline{Yadollah Zare}
\leftline{Instituto Nacional de Matem\'atica Pura e Aplicada-IMPA} 
\leftline{Rio de Janeiro}
\leftline{Email: yadollah2806@gmail.com, yadollah@impa.br} 
\end{document}